\DeclareMathOperator{\myvec}{vec}
\DeclareMathOperator{\myrank}{rank}
\DeclareMathOperator{\mydist}{dist}
\DeclareMathOperator{\mymod}{mod}
\newtheorem{theorem}{Theorem}[section]
\newtheorem{lemma}[theorem]{Lemma}
\newtheorem{proposition}[theorem]{Proposition}
\newtheorem{corollary}[theorem]{Corollary}
\theoremstyle{definition}
\newtheorem{definition}[theorem]{Definition}
\newtheorem{remark}[theorem]{Remark}
\newtheorem{example}[theorem]{Example}
\newtheorem{problem}[theorem]{Problem}
\newtheorem{roadmap}[theorem]{Roadmap}
\begin{document}
\title{Biangular lines revisited}
\author{Mikhail Ganzhinov and Ferenc Sz\"oll\H{o}si}
%\dedicatory{\monthyeardate\today{}}
\address{M.\ G.: Department of Communications and Networking, Aalto University School of Electrical Engineering, P.O. Box 15400, 00076 Aalto, Finland.}
\email{mikhail.ganzhinov@aalto.fi}
\address{F.\ Sz.: Hungary/Japan/Finland, (formerly with Aalto University).}
\email{szoferi@gmail.com} 
\thanks{This research was supported in part by the Academy of Finland, Grant \#289002}
\begin{abstract}
Line  systems passing through the origin of the $d$ dimensional Euclidean space admitting exactly two distinct angles are called biangular. It is shown that the maximum cardinality of biangular lines is at least $2(d-1)(d-2)$, and this result is sharp for $d\in\{4,5,6\}$. Connections to binary codes, few-distance sets, and association schemes are explored, along with their multiangular generalization.
\end{abstract}
\maketitle
\section{Introduction}
This paper is concerned with optimal arrangements of unit vectors in Euclidean space. Let $d,m,s\geq 1$ be integers, let $\mathbb{R}^d$ denote the $d$-dimensional Euclidean space with standard inner product $\left\langle .,.\right\rangle$, and let $\mathcal{X}\subset\mathbb{R}^d$ be a set of unit vectors with the associated set of inner products $A(\mathcal{X}):=\{\left\langle x,x'\right\rangle\colon x\neq x';x,x'\in\mathcal{X}\}$. The following two concepts are central to this paper: $\mathcal{X}$ forms a spherical $s$-distance set \cite{BBS}, \cite{L}, \cite{NM}, \cite{SHINO} if $|A(\mathcal{X})|\leq s$; and $\mathcal{X}$ spans a system of $m$-angular lines (passing through the origin in the direction of $x\in\mathcal{X}$), if $-1\not\in A(\mathcal{X})$ and $|\{\gamma^2\colon \gamma\in A(\mathcal{X})\}|\leq m$. With this terminology a system of $m$-angular lines can be considered as the switching class of certain spherical $2m$-distance set without antipodal vectors. If the parameters $s$ and $m$ are not specified, then we talk about few-distance sets \cite{BLO}, and multiangular lines, respectively. The fundamental question of interest concerns the maximum cardinality and structure of the largest sets $\mathcal{X}$ and their corresponding $A(\mathcal{X})$.

Equiangular lines (i.e., the case $m=1$) are classical combinatorial objects \cite{DGS}, \cite{LS}, \cite{vLS}, receiving considerable recent attention, see e.g., \cite{SUD}, \cite{GKM}. Biangular lines correspond to the case $m=2$, which have also been the subject of several recent studies \cite{Best}, \cite{KHA}, \cite{BTF}, \cite{HOLZNEW}, \cite{NS} where in particular engineers investigated them focusing on tight frames \cite{STEINER}, \cite{SHAYNE}. Our motivation for studying these objects is fueled by their intrinsic connection to kissing arrangements \cite{COHN}, \cite{SPLAG}, \cite{DUSTIN}. In particular, we hope that the techniques and results described in this paper will eventually contribute to a deeper understanding of low-dimensional sphere packings. The goal of this paper, which heavily builds on the theory set forth earlier in \cite{SZO}, is to describe a systematic approach to the study of multiangular lines, focusing in particular on the biangular case.

The outline of this paper is as follows: in Section~\ref{sec2} we give various constructions of biangular lines, showing that their maximum number is at least $2(d-1)(d-2)$ in $\mathbb{R}^d$ for every $d\geq 3$. In Section~\ref{sec3} we set up a general computational framework for exhaustively generating all (sufficiently large) biangular line systems, and in Section~\ref{sec4} we leverage on these ideas to classify the largest sets in $\mathbb{R}^d$ for every $d\leq 6$. In Section~\ref{sec5} we present our results on multiangular lines. In Section~\ref{sec6} we conclude our manuscript with a selection of open problems. To improve the readability, a technical part on graph representation was moved to Appendix~\ref{APPENDIX1}, along with a few rather large matrices displayed in Appendix~\ref{APPENDIXB}.

For a convenient reference, we display here in Table~\ref{TABLEMAIN} the best known lower bounds on the maximum number of biangular lines in $\mathbb{R}^d$ (where entries marked by $\ast$ are exact). Each of these numbers are new, except for the well-known case in dimension $23$.
\begin{table}[htbp]
\centering
\caption{Lower bounds on the maximum number of biangular lines in $\mathbb{R}^d$}\label{TABLEMAIN}
\begin{tabular}{ccccccccccc}
\toprule
$d$  & 2 & 3  &  4 &  5 &  6 &  7 &   8 &   9 &  10 & 11\\
$\#$ & 5$^\ast$ & 10$^\ast$ & 12$^\ast$ & 24$^\ast$ & 40$^\ast$ & 72 & 126 & 240 & 256 & 276\\
\midrule
$d$  & 12 & 13 & 14 & 15  &  16 &  17-20 & 21 & 22 &  23-35 & 36-\\
$\#$ & 296 & 336 & 392 & 456 & 576 & 816 & 896 & 1408 & 2300 & $2(d-1)(d-2)$\\
\bottomrule
\end{tabular}
\end{table}
%%%%%%%%%%%%%%%%%%%%%%%%%%%%%%%%%%%%%%%%%%%%%%%
% SECTION 2: CONSTRUCTIONS OF BIANGULAR LINES %
%%%%%%%%%%%%%%%%%%%%%%%%%%%%%%%%%%%%%%%%%%%%%%%
\section{Constructions of biangular lines}\label{sec2}
The goal of this section is to give various explicit constructions of large biangular line systems in low dimensional spaces.

Let $\mathcal{X}\subset\mathbb{R}^d$ be a set of unit vectors, spanning biangular lines and let $O$ be an orthogonal matrix representing an isometry of $\mathbb{R}^d$. Since for every $x\in\mathcal{X}$ the sets $\mathcal{X}':=(\mathcal{X}\setminus\{x\})\cup\{-x\}$ and $\mathcal{X}'':=\{Ox\colon x\in\mathcal{X}\}$ span the same system of biangular lines as $\mathcal{X}$, we may replace any $x\in\mathcal{X}$ with its negative or apply $O$ whenever it is necessary. Throughout this section we represent biangular line systems with a (conveniently chosen) corresponding set of unit vectors, and uniqueness is understood up to these operations.

First we give an elementary proof to the following trivial warm-up result.
\begin{lemma}\label{TENGONLEMMA}
The $5$ lines passing through the antipodal vertices of the convex regular $10$-gon is the unique maximum biangular line system in $\mathbb{R}^2$.
\end{lemma}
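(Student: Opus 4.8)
The plan is to convert the statement about lines into one about points on a circle, where it becomes a clean fact about very small difference sets. I would represent the $i$-th line by its angle $\theta_i\in[0,\pi)$ and map it to the point $P_i=(\cos 2\theta_i,\sin 2\theta_i)$ on the unit circle $S^1$; set $\psi_i=2\theta_i\in\mathbb{R}/2\pi\mathbb{Z}$. Since $\cos^2(\theta_i-\theta_j)=\tfrac12\bigl(1+\cos(\psi_i-\psi_j)\bigr)$, the inner product $\langle P_i,P_j\rangle=\cos(\psi_i-\psi_j)$ takes at most two distinct values precisely when the line system is biangular, and distinct lines give distinct $\psi_i$. Thus $n$ biangular lines in $\mathbb{R}^2$ correspond (up to a rotation) to an $n$-point set on $S^1$ realising at most two distinct pairwise distances.

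First I would record the key structural observation. Let $a<b$ in $(0,\pi]$ be the two central angles realised by the configuration, so that every pairwise arc $\psi_i-\psi_j$ equals $\pm a$ or $\pm b$ modulo $2\pi$. Writing $S=\{\psi_1,\dots,\psi_n\}\subset\mathbb{R}/2\pi\mathbb{Z}$, this says exactly that $S-S\subseteq\{0,\pm a,\pm b\}$. Because $a,b\in(0,\pi]$ are distinct, the set $\{0,\pm a,\pm b\}$ has at most five elements, with exactly five unless one of $a,b$ equals $\pi$. Since $|S-S|\ge|S|$ always holds (the translate $S-\psi_1$ injects $S$ into $S-S$), I obtain $n=|S|\le|S-S|\le 5$, which is the upper bound; equality forces $a,b\neq\pi$ and $|S-S|=|S|=5$.

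For uniqueness I would invoke the rigidity of the extremal case $|S-S|=|S|$ in an abelian group. Fixing $\psi_1\in S$ gives $S-\psi_1\subseteq S-S$ with equal cardinalities, hence $S-\psi_1=S-S=:H$; then $H=(S-\psi_1)-(S-\psi_1)=H-H$, so $H$ is closed under subtraction and contains $0$, i.e.\ $H$ is a subgroup and $S=\psi_1+H$ is one of its cosets. Here $H$ is a five-element subgroup of the circle group $\mathbb{R}/2\pi\mathbb{Z}$, and the only such subgroup is $\langle 2\pi/5\rangle$. Therefore $\psi_i=\psi_1+2\pi(i-1)/5$, equivalently $\theta_i=\theta_1+\pi(i-1)/5$, which are precisely the five lines through the antipodal vertices of the regular $10$-gon; the remaining freedom in $\theta_1$ is absorbed by a rotation, yielding uniqueness.

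The genuinely substantive step is the last one: proving that $|S-S|=|S|$ forces $S$ to be a coset of a finite subgroup, and then pinning down that the only order-$5$ subgroup of $\mathbb{R}/2\pi\mathbb{Z}$ is $\langle 2\pi/5\rangle$. Everything else is bookkeeping: verifying the two-angles-to-two-distances translation, checking that $\{0,\pm a,\pm b\}$ really has five elements in the extremal case (ruling out the degenerate possibilities $a=\pi$ or $b=\pi$, which correspond to a pair of perpendicular lines and yield strictly fewer than five lines), and confirming that the resulting regular pentagon is the $10$-gon system of the statement. I expect the coset lemma to be the only place needing any care, and it is standard and short.
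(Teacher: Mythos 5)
Your proof is correct, and it takes a genuinely different route from the paper's. The paper argues in coordinates: after normalizing $x_1=[1,0]$, every further unit vector (up to sign) must have first coordinate $\alpha$ or $\beta$, which caps the count at $5$; uniqueness is then obtained by writing down the polynomial system that the pairwise inner products of the five vectors must satisfy and checking it has the single feasible solution $\alpha=(-1+\sqrt5)/4$, $\beta=(1+\sqrt5)/4$. You instead double the line angles to pass to a two-distance set $S\subset\mathbb{R}/2\pi\mathbb{Z}$, observe $S-S\subseteq\{0,\pm a,\pm b\}$ so that $|S|\le|S-S|\le 5$, and settle uniqueness by the sumset-rigidity fact that $|S-S|=|S|$ forces $S$ to be a coset of a subgroup, necessarily $\langle 2\pi/5\rangle$. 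Your equality analysis is complete (ruling out $b=\pi$, i.e.\ perpendicular lines, and handling the one-angle case trivially), and the coset lemma argument $H=S-\psi_1=S-S$, $H-H=H$ is sound. What each approach buys: the paper's computation produces the explicit inner-product values as a by-product and deliberately previews the candidate-Gram-matrix/polynomial-system machinery of Sections~3--4, which is why it is phrased that way; your argument avoids all polynomial solving, explains structurally why the extremum must be a regular polygon, and pins down the configuration in angle coordinates directly --- but it exploits the group structure of $S^1$ and so is intrinsically two-dimensional, whereas the paper's coordinate-plus-polynomial template is the one that scales (in spirit) to the higher-dimensional classifications later in the paper.
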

\begin{proof}
Let $n\geq 1$, let $\alpha,\beta\in\mathbb{R}$ such that $0\leq \alpha < \beta<1$, and assume that $\mathcal{X}:=\{x_i\colon i\in\{1,\dots,n\}\}$ spans a maximum biangular line system in $\mathbb{R}^2$ with corresponding set of inner products $A(\mathcal{X})\subseteq\{\pm\alpha, \pm \beta\}$. We may assume without loss of generality that $x_1=[1,0]$. Since for $i\in\{2,\dots,n\}$ we have $\left\langle x_1,x_i\right\rangle\in A(\mathcal{X})$, it immediately follows that $x_i\in\{[\alpha,\sqrt{1-\alpha^2}],[\alpha,-\sqrt{1-\alpha^2}],[\beta,\sqrt{1-\beta^2}],[\beta,-\sqrt{1-\beta^2}]\}$, after replacing $x_i$ by $-x_i$ if it is necessary. Therefore $n\leq 5$, and the claimed configuration is indeed a largest possible example.

To see uniqueness, let us use the notation $x_2=[\alpha,\sqrt{1-\alpha^2}]$, $x_3=[\alpha,-\sqrt{1-\alpha^2}]$, $x_4=[\beta,\sqrt{1-\beta^2}]$, and $x_5=[\beta,-\sqrt{1-\beta^2}]$. Since $\left\langle x_2,x_3\right\rangle=2\alpha^2-1$, $\left\langle x_4,x_5\right\rangle=2\beta^2-1$, and $\left\langle x_2,x_4\right\rangle+\left\langle x_2,x_5\right\rangle=2\alpha\beta$, the following system of polynomial equations in the variables $\alpha$ and $\beta$ must hold:
\begin{equation*}
\begin{cases}
((2\alpha^2-1)^2-\alpha^2)((2\alpha^2-1)^2-\beta^2)=0\\
((2\beta^2-1)^2-\alpha^2)((2\beta^2-1)^2-\beta^2)=0\\
2\alpha\beta((2\alpha\beta)^2-(2\alpha)^2)((2\alpha\beta)^2-(2\beta)^2)((2\alpha\beta)^2-(\alpha+\beta)^2)((2\alpha\beta)^2-(\alpha-\beta)^2)=0.
\end{cases}
\end{equation*}
This admits the unique feasible solution $\alpha=(-1+\sqrt5)/4$ and $\beta=(1+\sqrt5)/4$.
\end{proof}
Recall that a binary code of length $d$ with minimum distance $\Delta$ is a set $\mathcal{B}\subseteq\mathbb{F}_2^d$ such that $\mydist(b,b')\geq \Delta$ for every distinct $b,b'\in\mathcal{B}$ where $\mydist(.,.)$ denotes the Hamming distance \cite{EZBOOK}. Applying the following function 
\begin{equation*}
\Sigma\colon \mathbb{F}_2\mapsto \mathbb{R},\qquad \Sigma(0)=1/\sqrt{d},\qquad \Sigma(1)=-1/\sqrt{d}
\end{equation*}
entrywise on the codewords (i.e., on the elements of $\mathcal{B}$) yields a spherical embedding.
\begin{lemma}\label{LCODE}
Let $d\geq 2$, and let $\Delta_1,\Delta_2\in\{1,\dots,d-1\}$. Let $\mathcal{B}$ be a binary code of length $d$, such that $\mydist(b,b')\in\{\Delta_1,\Delta_2,d-\Delta_1,d-\Delta_2\}$ for every distinct $b,b'\in\mathcal{B}$. Then $\mathcal{X}:=\{\Sigma(b)\colon b\in\mathcal{B}\}$ spans a system of biangular lines with $A(\mathcal{X})\subseteq\{\pm(1-2\Delta_1/d), \pm(1-2\Delta_2/d)\}$.
\end{lemma}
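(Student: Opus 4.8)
The plan is to reduce the entire statement to a single inner-product computation, after which the two defining conditions of a biangular line system follow by direct substitution. First I would note that $\Sigma$ maps $\mathbb{F}_2^d$ bijectively onto $\{\pm 1/\sqrt{d}\}^d$, so the elements of $\mathcal{X}$ are pairwise distinct; moreover each $\Sigma(b)$ has squared norm $\sum_{i=1}^d(1/\sqrt{d})^2 = 1$, confirming that $\mathcal{X}$ is genuinely a set of unit vectors.

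The key step is to express $\langle\Sigma(b),\Sigma(b')\rangle$ in terms of $\delta:=\mydist(b,b')$. In each coordinate $i$ the product $\Sigma(b_i)\Sigma(b_i')$ equals $+1/d$ when $b_i=b_i'$ (both endpoints of $\Sigma$ square to $1/d$) and $-1/d$ when $b_i\neq b_i'$. Since $b$ and $b'$ agree in exactly $d-\delta$ coordinates and disagree in $\delta$ coordinates, summing over all coordinates yields
\begin{equation*}
\langle\Sigma(b),\Sigma(b')\rangle = \frac{(d-\delta)-\delta}{d} = 1-\frac{2\delta}{d}.
\end{equation*}

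With this formula in hand the remainder is bookkeeping. Substituting the four admissible values $\delta\in\{\Delta_1,\Delta_2,d-\Delta_1,d-\Delta_2\}$ produces the inner products $1-2\Delta_1/d$ and $1-2\Delta_2/d$, together with their negatives via the identity $1-2(d-\Delta_j)/d = -(1-2\Delta_j/d)$; this establishes $A(\mathcal{X})\subseteq\{\pm(1-2\Delta_1/d),\pm(1-2\Delta_2/d)\}$. To conclude that $\mathcal{X}$ spans a biangular line system I would then check the two requirements from the definition: the set of squares $\{\gamma^2\colon\gamma\in A(\mathcal{X})\}\subseteq\{(1-2\Delta_1/d)^2,(1-2\Delta_2/d)^2\}$ clearly has at most two elements, and $-1\notin A(\mathcal{X})$. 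The latter is precisely where the hypothesis $\Delta_1,\Delta_2\in\{1,\dots,d-1\}$ enters, since $1-2\Delta_j/d=\pm 1$ would force $\Delta_j\in\{0,d\}$, a possibility the hypothesis rules out. I do not anticipate a genuine obstacle here; the only point demanding a moment's care is tracking the sign convention so that the complementary distances $d-\Delta_j$ yield exactly the negated inner products.
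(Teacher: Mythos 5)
Your proof is correct and follows exactly the paper's approach: the paper's entire proof is the single observation $\left\langle \Sigma(b),\Sigma(b')\right\rangle = 1-2\mydist(b,b')/d>-1$, of which your argument is simply a more detailed write-up (the coordinate-wise computation, the pairing of complementary distances into $\pm$ pairs, and the exclusion of $-1$ via $\Delta_j\in\{1,\dots,d-1\}$ are all implicit in that one line).
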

\begin{proof}
For every $b,b'\in\mathcal{B}$, we have $\left\langle \Sigma(b),\Sigma(b')\right\rangle = 1-2\mydist(b,b')/d>-1$.
\end{proof}
For terminology and basic facts on lattices we refer the reader to the textbook \cite{SPLAG}. It is well-known (see \cite[p.\ 117]{SPLAG}) that the shortest vectors of the $D_d$ lattices give rise to biangular line systems.
\begin{lemma}\label{L1}
Let $d\geq 2$, and let $\mathcal{X}\subset\mathbb{R}^d$ be the subset of all permutations of the unit vectors $[\pm1,\pm1,0,\dots,0]/\sqrt 2$ whose first nonzero coordinate is positive. Then $\mathcal{X}$ spans $|\mathcal{X}|=d(d-1)$ biangular lines with $A(\mathcal{X})\subseteq\{0,\pm 1/2\}$.
\end{lemma}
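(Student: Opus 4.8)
The plan is to write down the elements of $\mathcal{X}$ explicitly, count them, and then read off $A(\mathcal{X})$ by a short case analysis on the overlap of supports. For each pair of indices $1 \le i < j \le d$ and each sign $\epsilon \in \{+1,-1\}$, set
\begin{equation*}
v_{i,j}^{\epsilon} := (e_i + \epsilon\, e_j)/\sqrt 2,
\end{equation*}
where $e_1,\dots,e_d$ denotes the standard basis. These are exactly the permutations of $[\pm1,\pm1,0,\dots,0]/\sqrt2$ whose first nonzero coordinate is positive: the leading nonzero entry lies in position $i$ and equals $+1/\sqrt2$, while the sign $\epsilon$ in position $j$ is unconstrained. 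Each such vector is a unit vector, and since there are $\binom{d}{2}$ choices for the support $\{i,j\}$ and two choices for $\epsilon$, we get $|\mathcal{X}| = 2\binom{d}{2} = d(d-1)$, which is the asserted cardinality. (This is just the set of minimal vectors of the $D_d$ lattice, of squared length $2$, normalized and passed to lines, which accounts for the factor-of-two drop from $2d(d-1)$ to $d(d-1)$.)

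For the inner products I would fix two distinct elements of $\mathcal{X}$ and distinguish three cases according to how their two-element supports meet. If the supports are disjoint, no coordinate contributes and the inner product is $0$. If the supports share exactly one position, say position $k$, then only that coordinate contributes, giving $(\pm1/\sqrt2)(\pm1/\sqrt2) = \pm1/2$, regardless of whether $k$ is a leading or trailing position of either vector. If the supports coincide, say both vectors are supported on $\{i,j\}$ with $i<j$, then both carry $+1/\sqrt2$ in position $i$ by the leading-sign convention, so distinctness forces opposite signs in position $j$; the two contributions $1/2$ and $-1/2$ then cancel and the inner product is $0$. Collecting the three cases yields $A(\mathcal{X}) \subseteq \{0, \pm 1/2\}$.

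Finally, since $-1 \notin \{0,\pm1/2\}$ the set $\mathcal{X}$ spans a genuine line system, and as $\{\gamma^2 \colon \gamma \in A(\mathcal{X})\} \subseteq \{0, 1/4\}$ has at most two elements, these lines are biangular, completing the argument. There is no real obstacle here, as the verification is entirely elementary; the only point that genuinely uses a hypothesis is the coinciding-support case, where one must invoke the positivity of the leading coordinate to conclude that two distinct vectors on the same support differ precisely in the trailing sign, and are therefore orthogonal rather than antipodal.
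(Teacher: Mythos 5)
Your proof is correct and follows essentially the same route as the paper: a case analysis on how the two-element supports of a pair of vectors overlap (no overlap or full overlap gives $0$, a single overlap gives $\pm 1/2$). You simply spell out details the paper leaves implicit, namely the explicit count $2\binom{d}{2}=d(d-1)$ and the observation that the leading-sign convention makes same-support vectors orthogonal rather than antipodal.
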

\begin{proof}
For distinct $x,x'\in\mathcal{X}$ the inner product $\left\langle x,x'\right\rangle$ depends on the number of positions where the nonzero coordinates of $x$ and $x'$ overlap. If there is no overlap, or there are exactly two overlaps, then $\left\langle x,x'\right\rangle=0$. Otherwise, if there is a single overlap, then $\left\langle x,x'\right\rangle=\pm1/2$.
\end{proof}
\begin{remark}\label{REM1}
We remark that for $d\in\{6,7,8\}$ the set of (nonantipodal) shortest vectors of the exceptional lattices $E_d$ give rise to biangular line systems in $\mathbb{R}^d$ with inner product set $\{0,\pm1/2\}$ formed by $36$, $63$, and $120$ lines, respectively \cite[p.~120]{SPLAG}.
\end{remark}
Let $h>0$, $h<1$. Starting from a spherical $2$-distance set $\mathcal{X}\subset\mathbb{R}^d$, one may obtain a family of biangular line systems in $\mathbb{R}^{d+1}$, where the vectors $x\in\mathcal{X}$ are rescaled by a factor of $\sqrt{1-h^2}$ and translated along the $(d+1)$th coordinate to height $h$. In a similar spirit, the $6$ diagonals of the icosahedron can be continuously twisted in $\mathbb{R}^3$, yielding a family of biangular lines \cite{BTF}.
\begin{proposition}[Infinite families]\label{PINF}
Let $\mathcal{X}\subset\mathbb{R}^d$ be a spherical $2$-distance set with $A(\mathcal{X})\subseteq\{\alpha,\beta\}$, with $\alpha,\beta\geq-1$ and $\alpha,\beta<1$. Let $h>0, h <1$. Then
\begin{equation*}
\mathcal{Y}(h):=\{[h,\sqrt{1-h^2}x]\colon x\in\mathcal{X}\}
\end{equation*}
spans a system of biangular lines in $\mathbb{R}^{d+1}$ with  $A(\mathcal{Y}(h))\subseteq\{h^2+(1-h^2)\alpha,h^2+(1-h^2)\beta\}$.
\end{proposition}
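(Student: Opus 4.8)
The plan is to argue by a direct computation, exploiting the fact that the construction decouples the new first coordinate from a rescaled copy of $\mathcal{X}$. First I would check that every element of $\mathcal{Y}(h)$ is a unit vector: for $x\in\mathcal{X}$ one has $\|[h,\sqrt{1-h^2}x]\|^2=h^2+(1-h^2)\|x\|^2=h^2+(1-h^2)=1$, since each $x$ is a unit vector. The map $x\mapsto[h,\sqrt{1-h^2}x]$ is injective because $\sqrt{1-h^2}>0$, so $\mathcal{Y}(h)$ is a genuine set of unit vectors in $\mathbb{R}^{d+1}$.

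Next I would compute the pairwise inner products. For distinct $x,x'\in\mathcal{X}$ the block structure of the lifted vectors gives
\[
\left\langle [h,\sqrt{1-h^2}x],[h,\sqrt{1-h^2}x']\right\rangle = h^2+(1-h^2)\left\langle x,x'\right\rangle.
\]
Since $\mathcal{X}$ is a spherical $2$-distance set with $A(\mathcal{X})\subseteq\{\alpha,\beta\}$, substituting $\left\langle x,x'\right\rangle\in\{\alpha,\beta\}$ yields $A(\mathcal{Y}(h))\subseteq\{h^2+(1-h^2)\alpha,\,h^2+(1-h^2)\beta\}$, exactly as claimed. As this set has at most two elements, the set of squares $\{\gamma^2\colon\gamma\in A(\mathcal{Y}(h))\}$ automatically has cardinality at most $2$, so the required bound on the number of distinct squared angles holds without further work.

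The only point requiring genuine care — and hence the main (if mild) obstacle — is to verify that $-1\notin A(\mathcal{Y}(h))$, which is what guarantees that $\mathcal{Y}(h)$ spans \emph{lines} rather than merely forming a $2$-distance set. Here the hypotheses $\alpha,\beta\geq-1$, $h>0$, and $h<1$ all enter. Using $1-h^2>0$ together with $\alpha\geq-1$ (and likewise for $\beta$), I would estimate $h^2+(1-h^2)\alpha\geq h^2-(1-h^2)=2h^2-1>-1$, the last inequality following from $h>0$. The same bound holds with $\beta$ in place of $\alpha$. On the other side, $\alpha,\beta<1$ gives $h^2+(1-h^2)\alpha<h^2+(1-h^2)=1$, so every inner product lies in the open interval $(-1,1)$; in particular no two vectors are parallel and the lines are pairwise distinct. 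Combining these observations shows $\mathcal{Y}(h)$ spans a biangular line system with the stated inner product set, completing the argument.
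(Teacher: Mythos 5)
Your proposal is correct and follows essentially the same route as the paper's proof: a direct computation giving $\left\langle y,y'\right\rangle=h^2+(1-h^2)\left\langle x,x'\right\rangle$, followed by the observation that the hypotheses on $h$, $\alpha$, $\beta$ exclude $-1$ from $A(\mathcal{Y}(h))$. You merely spell out the details the paper leaves implicit (unit norms, injectivity, and the explicit bound $h^2+(1-h^2)\alpha\geq 2h^2-1>-1$), which is fine.
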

\begin{proof}
For every $y,y'\in\mathcal{Y}(h)$ we have $\left\langle y,y'\right\rangle=h^2+(1-h^2)\left\langle x,x'\right\rangle$ for some $x,x'\in\mathcal{X}$. Furthermore, $-1\not\in A(\mathcal{Y}(h))$ by our assumptions on $h$.
\end{proof}
Since the midpoints of the edges of the regular simplex in $\mathbb{R}^d$ forms a spherical $2$-distance set of size $d(d+1)/2$, biangular lines of this cardinality are abundant in $\mathbb{R}^{d+1}$. Translation to a well-chosen height yields the following variant.
\begin{proposition}[Lifting]\label{P2}
Let $\mathcal{X}\subset\mathbb{R}^d$ be a spherical $4$-distance set with $A(\mathcal{X})\subseteq\{\alpha,\beta,\gamma$, $\alpha+\beta-\gamma\}$, with $\alpha,\beta,\gamma\geq-1$ and $\alpha,\beta,\gamma<1$, and assume that $\alpha+\beta<0$. Then $\mathcal{Y}:=\{[\sqrt{-\alpha-\beta},\sqrt2x]/\sqrt{2-\alpha-\beta}\colon x\in\mathcal{X}\}$ spans a system of biangular lines in $\mathbb{R}^{d+1}$ with $A(\mathcal{Y})\subseteq\{\pm\frac{\alpha-\beta}{2-\alpha-\beta},\pm\frac{2\gamma-\alpha-\beta}{2-\alpha-\beta}\}$.
\end{proposition}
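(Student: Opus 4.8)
The plan is to imitate the lifting of Proposition~\ref{PINF}, except that now the height and the rescaling factor are tuned so that the four prescribed inner products of $\mathcal{X}$ fold into two antipodal pairs, leaving only two distinct squared values. First I would check that every element of $\mathcal{Y}$ is a unit vector: writing $y=[\sqrt{-\alpha-\beta},\sqrt2 x]/\sqrt{2-\alpha-\beta}$ and using $\|x\|=1$, the squared norm equals $((-\alpha-\beta)+2\|x\|^2)/(2-\alpha-\beta)=(2-\alpha-\beta)/(2-\alpha-\beta)=1$. I would also note that the construction is well defined because the hypothesis $\alpha+\beta<0$ makes the denominator positive, indeed $2-\alpha-\beta>2$.

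Next I would compute the inner product of two distinct lifted vectors. For $x,x'\in\mathcal{X}$ with $t:=\left\langle x,x'\right\rangle$, the common first coordinate contributes $-\alpha-\beta$ and the rescaled copies contribute $2t$, so that
\[
\left\langle y,y'\right\rangle=\frac{(-\alpha-\beta)+2t}{2-\alpha-\beta}=\frac{2t-\alpha-\beta}{2-\alpha-\beta}.
\]
The essential observation is that the affine map $\varphi(t):=(2t-\alpha-\beta)/(2-\alpha-\beta)$ sends the midpoint $(\alpha+\beta)/2$ to $0$ and satisfies $\varphi(\alpha+\beta-t)=-\varphi(t)$. Since the admissible distance set $\{\alpha,\beta,\gamma,\alpha+\beta-\gamma\}$ is precisely a union of two pairs symmetric about $(\alpha+\beta)/2$, namely $\{\alpha,\beta\}$ and $\{\gamma,\alpha+\beta-\gamma\}$, each pair is carried to $\{\pm\varphi(\alpha)\}=\{\pm\frac{\alpha-\beta}{2-\alpha-\beta}\}$ and $\{\pm\varphi(\gamma)\}=\{\pm\frac{2\gamma-\alpha-\beta}{2-\alpha-\beta}\}$ respectively. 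Substituting the four values of $t$ then confirms $A(\mathcal{Y})\subseteq\{\pm\frac{\alpha-\beta}{2-\alpha-\beta},\pm\frac{2\gamma-\alpha-\beta}{2-\alpha-\beta}\}$, which has at most two distinct squared values, as required for biangularity.

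It remains to verify that $\mathcal{Y}$ genuinely spans lines rather than degenerating, that is, $-1\notin A(\mathcal{Y})$; this is the only point that uses the sign hypothesis and is where I expect the (minor) subtlety to lie. Since inner products of unit vectors satisfy $t\geq-1$ for every value in $A(\mathcal{X})$ (in particular $\alpha+\beta-\gamma\geq-1$ holds automatically), and the denominator is positive, the inequality $\left\langle y,y'\right\rangle>-1$ is equivalent to $2t-\alpha-\beta>-(2-\alpha-\beta)$, i.e.\ to $t>\alpha+\beta-1$. The hypothesis $\alpha+\beta<0$ forces $\alpha+\beta-1<-1\leq t$, so the strict inequality holds for all four values of $t$ and $\mathcal{Y}$ indeed spans a biangular line system. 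The argument is thus almost entirely computational; the one structural input is recognizing the $(\alpha+\beta)/2$-symmetry that makes the fourth prescribed distance $\alpha+\beta-\gamma$ the partner of $\gamma$, which is exactly what allows the rescaling constants to be read off.
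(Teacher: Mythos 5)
Your proof is correct and follows essentially the same route as the paper: a direct computation showing $\left\langle y,y'\right\rangle=(2\left\langle x,x'\right\rangle-\alpha-\beta)/(2-\alpha-\beta)$, which folds the four admissible inner products into the two antipodal pairs, followed by the check that $-1\notin A(\mathcal{Y})$. You merely spell out what the paper leaves implicit (the symmetry of the distance set about $(\alpha+\beta)/2$ and the inequality $t\geq-1>\alpha+\beta-1$), which is fine but not a different argument.
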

\begin{proof}
For every $y,y'\in\mathcal{Y}$ we have $\left\langle y,y'\right\rangle=(-(\alpha+\beta)+2\left\langle x,x'\right\rangle)/(2-\alpha-\beta)$ for some $x,x'\in\mathcal{X}$. Furthermore, $-1\not\in A(\mathcal{Y})$ by our assumptions on $\alpha,\beta,\gamma$.
\end{proof}
\begin{remark}
Given a spherical $3$-distance set $\mathcal{X}$ with $A(\mathcal{X})\subseteq\{\alpha,\beta,\gamma\}$, then it might happen that $\alpha+\beta<0$, $\alpha+\gamma<0$, and $\beta\neq\gamma$. When this occurs, lifting via Proposition~\ref{P2} could result in nonisometric biangular line systems.
\end{remark}
The main utility of Proposition~\ref{P2} is that antipodal vectors (spanning the very same lines) can be split into two nonantipodal vectors one dimension higher. It immediately follows that any equiangular line system leads to twice as many biangular lines in one dimension higher.
\begin{theorem}\label{T1}
For every $d\geq 3$, there exists a set $\mathcal{X}\subset\mathbb{R}^d$ spanning $|\mathcal{X}|=2(d-1)(d-2)$ biangular lines with $A(\mathcal{X})\subseteq\{\pm1/5,\pm3/5\}$.
\end{theorem}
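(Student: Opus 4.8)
The plan is to realize the desired system as a lift, via Proposition~\ref{P2}, of the full root system of the lattice $D_{d-1}$ sitting in $\mathbb{R}^{d-1}$. Concretely, I would set $n:=d-1$ and take $\mathcal{R}\subset\mathbb{R}^{n}$ to be the set of all normalized roots $(\pm e_i\pm e_j)/\sqrt2$ with $1\le i<j\le n$ and every choice of signs; in contrast to Lemma~\ref{L1}, here I deliberately retain \emph{both} members of each antipodal pair. An inner product computation of the same flavour as in Lemma~\ref{L1} (now also accounting for the antipodal overlaps) shows that $\mathcal{R}$ is a spherical $4$-distance set with $A(\mathcal{R})\subseteq\{-1,-\tfrac12,0,\tfrac12\}$, while $|\mathcal{R}|=2n(n-1)=2(d-1)(d-2)$ is already the target cardinality.

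The key step is to fit these four inner product values to the template $\{\alpha,\beta,\gamma,\alpha+\beta-\gamma\}$ required by Proposition~\ref{P2}. That template forces two of its entries to have the same sum as the other two, so I would search for such a balanced pairing among $\{-1,-\tfrac12,0,\tfrac12\}$: indeed $-1+\tfrac12=0+(-\tfrac12)=-\tfrac12$, whence the choice $\alpha=-1$, $\beta=\tfrac12$, $\gamma=0$ gives $\alpha+\beta-\gamma=-\tfrac12$ and covers all four values. The hypotheses of Proposition~\ref{P2} are then immediate: $\alpha,\beta,\gamma\ge-1$, all lie below $1$, and $\alpha+\beta=-\tfrac12<0$. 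Substituting into the output formula yields $\tfrac{\alpha-\beta}{2-\alpha-\beta}=\tfrac{-3/2}{5/2}=-\tfrac35$ and $\tfrac{2\gamma-\alpha-\beta}{2-\alpha-\beta}=\tfrac{1/2}{5/2}=\tfrac15$, so the lift $\mathcal{Y}\subset\mathbb{R}^{d}$ satisfies $A(\mathcal{Y})\subseteq\{\pm\tfrac15,\pm\tfrac35\}$; in particular $-1\notin A(\mathcal{Y})$, so $\mathcal{Y}$ spans biangular lines.

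It then remains to check that $\mathcal{Y}$ spans as many \emph{lines} as it has vectors, which is exactly where the factor of two is gained. Every $y\in\mathcal{Y}$ has the same strictly positive first coordinate $\sqrt{-\alpha-\beta}/\sqrt{2-\alpha-\beta}=1/\sqrt5$, so $y=-y'$ is impossible and $y=\pm y'$ forces $y=y'$; since the lifting map $x\mapsto[\sqrt{-\alpha-\beta},\sqrt2 x]/\sqrt{2-\alpha-\beta}$ is injective, the $2(d-1)(d-2)$ vectors of $\mathcal{Y}$ determine $2(d-1)(d-2)$ distinct lines. This is precisely the splitting mechanism flagged in the remark after Proposition~\ref{P2}: the antipodal pairs of $\mathcal{R}$, which collapse to single lines in $\mathbb{R}^{d-1}$, are pried apart into distinct nonantipodal lines one dimension higher. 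I do not anticipate a serious obstacle here; the only points demanding care are that the sign-complete root system is used (so that the antipodal pairs are actually present to be split), and the bookkeeping of the smallest case $d=3$, where $D_2$ is reducible and only the values $\{0,-1\}$ occur, still a subset of the template, so that both the count and the inclusion $A(\mathcal{Y})\subseteq\{\pm\tfrac15,\pm\tfrac35\}$ survive.
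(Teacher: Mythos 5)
Your proposal is correct and follows essentially the same route as the paper: take the full (sign-complete) root system of $D_{d-1}$ in $\mathbb{R}^{d-1}$, view it as a spherical $4$-distance set with inner products $\{-1,-\tfrac12,0,\tfrac12\}$, and lift it via Proposition~\ref{P2} with $\alpha=-1$, $\beta=\tfrac12$, $\gamma=0$. The paper's proof is just a one-line invocation of Lemma~\ref{L1} and Proposition~\ref{P2}; your write-up merely makes explicit the template matching, the resulting inner products $\{\pm\tfrac15,\pm\tfrac35\}$, the injectivity/line-count bookkeeping, and the degenerate case $d=3$, all of which are consistent with the paper's argument.
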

\begin{proof}
Take all $2(d-1)(d-2)$ vectors in $\mathbb{R}^{d-1}$ forming a spherical $4$-distance set $\mathcal{X}$ with $A(\mathcal{X})\subseteq \{-1,-1/2,0,1/2\}$ in Lemma~\ref{L1} and then use Proposition~\ref{P2} to get the claimed biangular line systems.
\end{proof}
A further application of Proposition~\ref{P2} is the following.
\begin{corollary}\label{CUBICCOR}
For $d\in\{4,\dots,16\}$ there exists a set $\mathcal{X}\subset\mathbb{R}^d$ spanning $|\mathcal{X}|=\binom{d}{3}$ biangular lines. There exists a set $\mathcal{X}\subset\mathbb{R}^{17}$ spanning $|\mathcal{X}|=\binom{18}{3}$ biangular lines.
\end{corollary}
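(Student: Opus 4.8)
The plan is to obtain all of these systems by feeding the unit vectors associated with the $3$-element subsets of a ground set into Proposition~\ref{P2}, taking advantage of the fact that its hypothesis only asks for $A(\mathcal X)$ to be \emph{contained in} a set of the shape $\{\alpha,\beta,\gamma,\alpha+\beta-\gamma\}$, so that the fourth value may be an unattained \emph{phantom}.

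Concretely, for a ground set of size $d$ let $v_S=\sum_{i\in S}e_i\in\mathbb{R}^d$ run over the $\binom d3$ triples $S$; subtract the barycentre $\tfrac3d\mathbf 1$ and normalise. The resulting unit vectors lie in and span the $(d-1)$-dimensional zero-sum hyperplane and realise the Johnson scheme $J(d,3)$: for distinct $S,T$ the inner product equals $c_{|S\cap T|}$ with $c_j=\frac{j-9/d}{3-9/d}$, and these three numbers sit in arithmetic progression with $c_0=-3/(d-3)$. A one-line count shows that the intersection size $0$ occurs exactly when $d\ge6$.

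For the lift I would take $(\alpha,\beta,\gamma)=(c_0,c_1,c_2)$, so that $\{c_0,c_1,c_2\}\subseteq\{\alpha,\beta,\gamma,\alpha+\beta-\gamma\}$ holds with $\alpha+\beta-\gamma=c_0+c_1-c_2$ a merely formal value. The decisive point is that $\alpha+\beta=c_0+c_1=\frac{d-18}{3(d-3)}$ is negative for every $d<18$, which is precisely what pushes the construction well past the range a naive pairing would allow; moreover $c_0\ge-1$ for $d\ge6$ and $c_2<1$ always, while Proposition~\ref{P2} places no constraint on the phantom $\alpha+\beta-\gamma$, so its hypotheses are met for $6\le d\le17$. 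Since Proposition~\ref{P2} raises the dimension by one, substituting $2-\alpha-\beta=\frac{5d}{3(d-3)}$ into its conclusion makes the two resulting angles collapse to the $d$-independent values $\pm1/5$ and $\pm3/5$, and one gets $\binom d3$ biangular lines in $\mathbb{R}^d$ for all $6\le d\le16$. The two remaining cases $d\in\{4,5\}$ are degenerate: there $J(d,3)$ is only a $1$- or $2$-distance set (a regular simplex for $d=4$, and a two-distance set whose lift is the system of $10$ equiangular lines in $\mathbb{R}^5$), and Proposition~\ref{P2} applied to its attained values again produces $\binom d3$ biangular lines.

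Finally, the borderline $d=18$ is where $\alpha+\beta=c_0+c_1=0$ makes the lift illegal, but it is also exactly where lifting becomes superfluous: the attained inner products are now $\{-1/5,1/5,3/5\}$, whose squares are the two numbers $\{1/25,9/25\}$, so the $\binom{18}3$ Johnson vectors already span a biangular system directly in $\mathbb{R}^{17}$, which is the asserted example. I expect the main obstacle to be conceptual rather than computational: spotting the phantom-value completion that furnishes a negative-sum pair $\{\alpha,\beta\}$, since reading Proposition~\ref{P2} literally stalls the Johnson construction at $d\le8$. A secondary subtlety is the case $d=6$, where $c_0=-1$ means that complementary triples are antipodal in $\mathbb{R}^5$; here one must invoke the splitting feature of Proposition~\ref{P2} to check that each antipodal pair becomes two distinct lines, so that the lift really yields $\binom63=20$ rather than $10$ lines.
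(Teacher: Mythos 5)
Your proposal is correct and takes essentially the same route as the paper: the centered triple-indicator (Johnson) vectors are exactly the paper's ``canonical'' spherical $3$-distance set with inner products $c_j=\frac{jd-9}{3(d-3)}$, the lift uses Proposition~\ref{P2} with precisely the pairing $(\alpha,\beta,\gamma)=(c_0,c_1,c_2)$ and phantom fourth value $c_0+c_1-c_2$ (the paper's displayed condition $\frac{d-9}{3(d-3)}-\frac{3}{d-3}<0$ is your $\alpha+\beta<0$), and the case $d=18$ is likewise handled by direct embedding into the zero-sum hyperplane $\mathbb{R}^{17}$. If anything, you are more explicit than the paper, which glosses over $d\in\{4,5\}$ (where $c_0<-1$ but is unattained, so a different admissible choice of $\alpha,\beta,\gamma$ is needed) and the antipodal splitting at $d=6$.
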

\begin{proof}
Consider the `canonical' spherical $3$-distance set
$\mathcal{X}:=\{\text{All permutations of}$ $ \sqrt{\frac{d-3}{3d}}[1,1,1,-\frac{3}{d-3},\dots,-\frac{3}{d-3}]\in\mathbb{R}^{d}\}$ of cardinality $\binom{d}{3}$ with $A(\mathcal{X})\subseteq\{-\frac{3}{d-3},\frac{d-9}{3(d-3)}$, $\frac{2d-9}{3(d-3)}\}$. Since for every $x\in\mathcal{X}$, $\left\langle x,[1,1,\dots,1]\right\rangle=0$, $\mathcal{X}$ is embedded into $\mathbb{R}^{d-1}$. Consequently if $d=18$, then $\mathcal{X}$ spans a biangular line system in $\mathbb{R}^{17}$. If $d\leq 16$, then since $\frac{d-9}{3(d-3)}-\frac{3}{d-3}<0$, Proposition~\ref{P2} yields the claimed configurations in $\mathbb{R}^d$.
\end{proof}
Finally, a rather surprising consequence of Proposition~\ref{P2} is the following: the biangular line systems mentioned in Remark~\ref{REM1} are not the best possible in their respective dimension.
\begin{corollary}\label{C1}
There exists a set $\mathcal{X}\subset\mathbb{R}^d$ spanning biangular lines with $A(\mathcal{X})\subseteq\{\pm1/5,\pm3/5\}$ for $(d,|\mathcal{X}|)\in\{(3,4),(4,12),(5,24),(6,40),(7,72),(8,126), (9,240)\}$.
\end{corollary}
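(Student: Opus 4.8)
The plan is to recognize that the listed cardinalities split into two regimes governed by the same lifting mechanism. For $d\in\{3,4,5,6\}$ one has $2(d-1)(d-2)\in\{4,12,24,40\}$, so these four cases are already covered by Theorem~\ref{T1}: the full set of minimal vectors of the $D_{d-1}$ lattice forms a spherical $4$-distance set with inner products in $\{-1,-1/2,0,1/2\}$, and Proposition~\ref{P2} lifts them to biangular lines in $\mathbb{R}^d$ with $A(\mathcal{X})\subseteq\{\pm1/5,\pm3/5\}$. It remains to treat $d\in\{7,8,9\}$, where the required cardinalities $72,126,240$ exceed $2(d-1)(d-2)$ and instead coincide with the kissing numbers of $E_6$, $E_7$, $E_8$.

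For these three cases I would replace the $D_{d-1}$ lattice by the exceptional lattice $E_{d-1}$. First I would take the complete set of (normalized) minimal vectors of $E_{d-1}$ for $d-1\in\{6,7,8\}$; by Remark~\ref{REM1} the nonantipodal pairs realize the inner products $\{0,\pm1/2\}$, and adjoining the antipodal pairs contributes the value $-1$, so the full set is a spherical $4$-distance set with $A(\mathcal{X})\subseteq\{-1,-1/2,0,1/2\}$, of cardinality $72$, $126$, $240$ respectively.

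Next I would apply Proposition~\ref{P2} verbatim with the assignment $\alpha=-1/2$, $\beta=0$, $\gamma=1/2$, so that $\alpha+\beta-\gamma=-1$ exhausts the fourth inner product, $\alpha+\beta=-1/2<0$ as required, and $2-\alpha-\beta=5/2$. Tracking the four inner products through the formula $\left\langle y,y'\right\rangle=(-(\alpha+\beta)+2\left\langle x,x'\right\rangle)/(2-\alpha-\beta)$ sends $1/2,0,-1/2,-1$ to $3/5,1/5,-1/5,-3/5$, giving the desired set $A\subseteq\{\pm1/5,\pm3/5\}$ in $\mathbb{R}^d$. Because the lift is injective and carries each antipodal input pair $\{x,-x\}$ to a pair of distinct (nonantipodal) lines, the cardinality is preserved, yielding $72$, $126$, $240$ biangular lines for $d=7,8,9$.

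I expect no serious obstacle here, since the construction is identical to that of Theorem~\ref{T1} once the lattice is swapped; the only point requiring care is the inner product structure of the $E_{d-1}$ minimal vectors, which is classical and already recorded in Remark~\ref{REM1}. The conceptual content is simply that lifting the full (antipodal) minimal set one dimension higher splits each line into two, which is precisely what makes these systems larger than the direct biangular systems of Remark~\ref{REM1}.
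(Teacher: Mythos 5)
Your proposal is correct and follows essentially the same route as the paper: the cases $d\in\{3,4,5,6\}$ are dispatched by Theorem~\ref{T1}, and the cases $d\in\{7,8,9\}$ by applying Proposition~\ref{P2} to the full (antipodal) sets of minimal vectors of $E_6$, $E_7$, $E_8$ from Remark~\ref{REM1}, viewed as spherical $4$-distance sets with inner products $\{-1,-1/2,0,1/2\}$. Your explicit verification of the parameter assignment $\alpha=-1/2$, $\beta=0$, $\gamma=1/2$ and the resulting inner products $\{\pm1/5,\pm3/5\}$ simply spells out the details that the paper's two-sentence proof leaves implicit.
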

\begin{proof}
The cases $d\in\{3,4,5,6\}$ follow from Theorem~\ref{T1}. To see the remaining cases, combine Proposition~\ref{P2} with the exceptional configurations mentioned in Remark~\ref{REM1}.
\end{proof}
Later (see Section~\ref{sec4}) we will show that Theorem~\ref{T1} gives rise to a largest possible line system for $d\in\{4,5,6\}$, and we tend to believe that Corollary~\ref{C1} gives the best configurations for $d\in\{7,8,9\}$ as well.

Next we prove a preliminary technical result. Following the terminology of \cite{LS}, we denote by $N_{1/3}(d)$ the maximum number of equiangular lines in $\mathbb{R}^d$ where the set of inner products is a subset of $\{\pm1/3\}$. We note that $N_{1/3}(0)=0$.
\begin{proposition}\label{P3}
For $m\geq 1$ and $d\geq m$, there exists a set $\mathcal{X}\subset\mathbb{R}^d$ spanning $|\mathcal{X}|=2m\cdot N_{1/3}(d-m)$ biangular lines with $A(\mathcal{X})\subseteq\{\pm1/5,\pm3/5\}$.
\end{proposition}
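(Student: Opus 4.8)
The plan is to produce $\mathcal{X}$ by an explicit construction rather than an abstract existence argument: I would take a realization of $N:=N_{1/3}(d-m)$ equiangular lines by unit vectors in $\mathbb{R}^{d-m}$, and attach to each such vector a single coordinate direction drawn from an auxiliary orthonormal frame in the complementary $\mathbb{R}^{m}$, tuned to a precise height. The factor $2m$ is then built in from the start: the $2$ comes from the two signs of the attached coordinate and the $m$ from the choice of which frame vector to attach, so that each equiangular line spawns exactly $2m$ new lines.

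Concretely, writing $\mathbb{R}^{d}=\mathbb{R}^{d-m}\oplus\mathbb{R}^{m}$, I would first fix unit vectors $v_1,\dots,v_N\in\mathbb{R}^{d-m}$ spanning the $N$ equiangular lines, so that $\langle v_i,v_j\rangle\in\{\pm 1/3\}$ for $i\neq j$; the specific choice of representative per line is immaterial, since replacing $v_i$ by $-v_i$ only flips signs $\epsilon_{ij}:=3\langle v_i,v_j\rangle\in\{\pm 1\}$. Letting $e_1,\dots,e_m$ be the standard basis of the second summand, I would set
\[
 w_{i,k}^{\sigma}:=\sqrt{3/5}\,v_i+\sigma\sqrt{2/5}\,e_k,\qquad i\in\{1,\dots,N\},\ k\in\{1,\dots,m\},\ \sigma\in\{+1,-1\},
\]
and take $\mathcal{X}:=\{w_{i,k}^{\sigma}\}$. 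Each $w_{i,k}^{\sigma}$ is a unit vector because $3/5+2/5=1$, and there are exactly $2mN$ of them. The core computation is then to expand, using $e_k\perp v_i$ and $\langle e_k,e_l\rangle=\delta_{kl}$,
\[
 \langle w_{i,k}^{\sigma},w_{j,l}^{\tau}\rangle=\tfrac{3}{5}\langle v_i,v_j\rangle+\tfrac{2}{5}\,\sigma\tau\,\delta_{kl},
\]
and to split into the four cases $i=j$ versus $i\neq j$ and $k=l$ versus $k\neq l$. When $i\neq j,\,k\neq l$ this equals $\pm 1/5$; when $i\neq j,\,k=l$ it equals $\pm 1/5\pm 2/5\in\{\pm 1/5,\pm 3/5\}$; when $i=j,\,k\neq l$ it equals $3/5$; and when $i=j,\,k=l$ (forcing $\sigma\neq\tau$) it equals $3/5-2/5=1/5$. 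Hence every inner product between distinct vectors lies in $\{\pm 1/5,\pm 3/5\}$, so $\mathcal{X}$ is biangular with $A(\mathcal{X})\subseteq\{\pm 1/5,\pm 3/5\}$.

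The remaining point, which needs only mild care, is to confirm that $\mathcal{X}$ genuinely spans $2mN$ \emph{distinct} lines and that $-1\notin A(\mathcal{X})$. Both follow immediately from the same case analysis: each pairwise inner product has absolute value $1/5$ or $3/5$, hence is never equal to $\pm 1$, so no two of the $w_{i,k}^{\sigma}$ coincide or are antipodal, and none is deleted. I do not expect a genuine obstacle: unlike the antipodal-splitting of Proposition~\ref{P2}, this construction is direct, and the only real content is locating the height $\sqrt{2/5}$. This is precisely the scaling $a^2=3/5$, $b^2=2/5$ for which the arithmetic combinations $\pm a^2/3$, $\pm a^2/3\pm b^2$, $a^2$, and $a^2-b^2$ arising in the four cases all collapse onto $\{\pm 1/5,\pm 3/5\}$; once this value is identified, the verification reduces to the finite check above. (The degenerate case $d=m$ is vacuous, as $N_{1/3}(0)=0$.)
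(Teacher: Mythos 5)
Your construction is exactly the paper's: writing $\sqrt6/\sqrt{10}=\sqrt{3/5}$ and $2/\sqrt{10}=\sqrt{2/5}$, the paper takes $\mathcal{X}=\{[\sqrt6y,\pm2e]/\sqrt{10}\colon y\in\mathcal{Y},\,e\in\mathcal{E}\}$ with $\mathcal{Y}$ a maximum $\{\pm1/3\}$-equiangular set and $\mathcal{E}$ the standard basis of $\mathbb{R}^m$, and verifies $\langle x,x'\rangle=3\langle y,y'\rangle/5\pm2\langle e,e'\rangle/5$ just as you do. Your version is correct and merely spells out the four-case check (including distinctness of the $2mN$ lines) that the paper leaves implicit.
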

\begin{proof}
Let $\mathcal{E}$ denote the set of canonical basis vectors of $\mathbb{R}^{m}$, and consider a maximum set $\mathcal{Y}\subset\mathbb{R}^{d-m}$ spanning $N_{1/3}(d-m)$ equiangular lines with $A(\mathcal{Y})\subseteq\{\pm1/3\}$. We claim that the following set $\mathcal{X}\subset\mathbb{R}^d$ spans a biangular line system:
\begin{equation*}
\mathcal{X}:=\{[\sqrt6y,2e]/\sqrt{10}\colon y\in\mathcal{Y}, e\in\mathcal{E}\}\cup\{[\sqrt6y,-2e]/\sqrt{10}\colon y\in\mathcal{Y}, e\in\mathcal{E}\}.
\end{equation*}
Indeed, as for $x,x'\in\mathcal{X}$, we have $\left\langle x,x'\right\rangle = 3\left\langle y,y'\right\rangle/5\pm2\left\langle e,e'\right\rangle/5$ for some (not necessarily distinct) $e,e'\in\mathcal{E}$ and $y,y'\in\mathcal{Y}$. Since $\left\langle e,e'\right\rangle\in\{0,1\}$ and $\left\langle y,y'\right\rangle\in\{\pm1/3,1\}$ the claim follows.
\end{proof}
We note the following.
\begin{corollary}
There exists a set $\mathcal{X}\subset\mathbb{R}^{14}$ spanning $|\mathcal{X}|=392$ biangular lines with $A(\mathcal{X})\subseteq\{\pm1/5,\pm3/5\}$.
\end{corollary}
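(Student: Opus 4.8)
The plan is to read this off directly from Proposition~\ref{P3}. Setting $d=14$ there, the number of lines produced is $2m\cdot N_{1/3}(14-m)$, and the target factors as $392=2\cdot 7\cdot 28$, which suggests the balanced choice $m=7$. With $m=7$ the ambient dimension is $m+(14-m)=14$ automatically, and the count becomes $2\cdot 7\cdot N_{1/3}(7)$; hence the statement reduces to the single numerical input $N_{1/3}(7)=28$.

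Thus the only thing to supply is a system of $28$ equiangular lines in $\mathbb{R}^7$ whose pairwise inner products all lie in $\{\pm 1/3\}$, together with the (classical) fact that $28$ is best possible. For the lower bound $N_{1/3}(7)\geq 28$ I would exhibit an explicit model: index the lines by the $\binom{8}{2}=28$ two-element subsets $\{i,j\}\subseteq\{1,\dots,8\}$ and to each associate the vector in $\mathbb{R}^8$ carrying $+3$ in coordinates $i$ and $j$ and $-1$ in the remaining six coordinates. Every such vector has coordinate sum $0$, so the whole configuration sits in the hyperplane orthogonal to $[1,\dots,1]$, i.e.\ in $\mathbb{R}^7$; a direct computation gives squared norm $24$ and off-diagonal inner products $\pm 8$ according to whether two subsets meet in one element or in none, so after normalization the inner products are exactly $\pm 1/3$. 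The matching upper bound $N_{1/3}(7)\leq 28$ is the classical maximum for equiangular lines in dimension $7$, which I would cite from \cite{LS}, \cite{vLS}.

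With $N_{1/3}(7)=28$ established, applying Proposition~\ref{P3} with $m=7$ and this extremal set $\mathcal{Y}\subset\mathbb{R}^7$ immediately yields a set $\mathcal{X}\subset\mathbb{R}^{14}$ of $2\cdot 7\cdot 28=392$ biangular lines with $A(\mathcal{X})\subseteq\{\pm 1/5,\pm 3/5\}$, as required.

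I do not anticipate a genuine obstacle: the construction is a one-line specialization of Proposition~\ref{P3}, and essentially all the content lives in the auxiliary identity $N_{1/3}(7)=28$. The only point requiring real care is to confirm that the extremal $7$-dimensional equiangular configuration truly realizes the angle $\arccos(1/3)$ (rather than some other admissible value), which is precisely what the explicit $\binom{8}{2}$-model verifies; this is the step I would check most carefully before invoking the proposition.
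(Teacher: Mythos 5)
Your proposal is correct and follows essentially the same route as the paper: the paper's proof is exactly an application of Proposition~\ref{P3} with $m=7$ and the equiangular system living in $\mathbb{R}^7$, citing \cite{LS} for $N_{1/3}(7)=28$. Your explicit $\binom{8}{2}$-model of the $28$ lines at angle $\arccos(1/3)$ is a harmless (and correct) addition, verifying the lower bound that the paper simply cites.
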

\begin{proof}
Follows from Proposition~\ref{P3} by setting $m=7$ and $d=7$, and by recalling from \cite{LS} that $N_{1/3}(7)=28$.
\end{proof}
It turns out that one may combine certain line systems described in Proposition~\ref{P3} with the $256$ lines spanned by the `even half' of the $10$ dimensional hypercube. This yields improved results for $d\in\{10,11,12,13,15,16\}$, and gives the same number of biangular lines for $d=17$ as Corollary~\ref{CUBICCOR}.
\begin{theorem}\label{T2}
For $d\geq 10$, there exists a set $\mathcal{X}\subset\mathbb{R}^d$ spanning $|\mathcal{X}|=256+20N_{1/3}(d-10)$ biangular lines with $A(\mathcal{X})\subseteq\{\pm1/5,\pm3/5\}$.
\end{theorem}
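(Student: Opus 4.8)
The plan is to realize $\mathcal{X}$ as the union of two biangular systems that share a common block of ten coordinates. Writing vectors of $\mathbb{R}^d$ as $[u,w]$ with $u\in\mathbb{R}^{d-10}$ and $w\in\mathbb{R}^{10}$, I would take as the first ingredient the set $\mathcal{H}$ consisting of the vectors $[0,v]/\sqrt{10}$ with $v\in\{\pm1\}^{10}$ having an even number of negative entries, keeping one representative from each antipodal pair; this is the ``even half'' of the hypercube embedded into the last ten coordinates. As the second ingredient I would take the system $\mathcal{X}_0\subset\mathbb{R}^d$ produced by Proposition~\ref{P3} with $m=10$, whose vectors have the form $[\sqrt6y,\pm2e]/\sqrt{10}$ with $y\in\mathcal{Y}\subset\mathbb{R}^{d-10}$ a maximum $\{\pm1/3\}$-equiangular system and $e$ a canonical basis vector of the last ten coordinates. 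Setting $\mathcal{X}:=\mathcal{H}\cup\mathcal{X}_0$, it then remains to control the three families of inner products and the line count.

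First I would confirm that $\mathcal{H}$ is biangular with $A(\mathcal{H})\subseteq\{\pm1/5,\pm3/5\}$. Under the correspondence $+1\mapsto0$, $-1\mapsto1$ the even half becomes the even-weight code of length $10$, so any two distinct codewords lie at an even Hamming distance $k\in\{2,4,6,8,10\}$. Since $\langle[0,v]/\sqrt{10},[0,v']/\sqrt{10}\rangle=1-k/5$, the nontrivial inner products take the values $\tfrac35,\tfrac15,-\tfrac15,-\tfrac35,-1$; discarding the value $-1$, which only occurs for antipodal vectors and is removed by the choice of representatives, leaves exactly $\{\pm1/5,\pm3/5\}$. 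The $2^{9}=512$ even-weight vectors collapse to $|\mathcal{H}|=256$ lines.

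Next I would compute the cross inner products. For $h=[0,v]/\sqrt{10}\in\mathcal{H}$ and $x=[\sqrt6y,\pm2e_j]/\sqrt{10}\in\mathcal{X}_0$ one gets $\langle h,x\rangle=\tfrac1{10}\langle v,\pm2e_j\rangle=\pm v_j/5\in\{\pm1/5\}$, which already lies in the admissible set and is never equal to $\pm1$. Together with Proposition~\ref{P3}, which guarantees $A(\mathcal{X}_0)\subseteq\{\pm1/5,\pm3/5\}$ and $-1\notin A(\mathcal{X}_0)$, this shows $A(\mathcal{X})\subseteq\{\pm1/5,\pm3/5\}$ and $-1\notin A(\mathcal{X})$, so $\mathcal{X}$ spans a biangular line system. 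For the count I would observe that when $d>10$ every vector of $\mathcal{X}_0$ has nonzero projection $\sqrt6y/\sqrt{10}$ onto the first $d-10$ coordinates while every vector of $\mathcal{H}$ vanishes there; hence no line of $\mathcal{H}$ coincides with a line of $\mathcal{X}_0$, and the two counts add to give $|\mathcal{X}|=256+2\cdot10\cdot N_{1/3}(d-10)=256+20N_{1/3}(d-10)$. The degenerate case $d=10$ reduces to $\mathcal{H}$ alone since $N_{1/3}(0)=0$.

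The only genuinely delicate point is the bookkeeping of the normalizations: the constants $\sqrt6$, $2$, and $\sqrt{10}$ built into Proposition~\ref{P3} are precisely what force the hypercube block and the $\pm2e$ block, both living in the last ten coordinates, to interact through inner products of absolute value $1/5$ rather than some forbidden value. I would verify this alignment before anything else, since it is exactly what makes the two systems compatible inside a single biangular configuration; the remaining steps are then routine verifications.
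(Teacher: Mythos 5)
Your proposal is correct and follows essentially the same route as the paper: the paper's proof likewise takes the union of the $256$ lines from the even-weight length-$10$ binary code (via Lemma~\ref{LCODE}, which is exactly your ``even half'' of the hypercube with antipodal representatives chosen by fixing the first coordinate) and the vectors $[\sqrt6y,\pm2e]/\sqrt{10}$ of Proposition~\ref{P3} with $m=10$, then checks the cross inner products $\pm2\left\langle e,z\right\rangle/\sqrt{10}=\pm1/5$ just as you do. The only difference is organizational — you cite Proposition~\ref{P3} for the internal inner products of the second block while the paper re-verifies them inline — which does not change the argument.
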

\begin{proof}
Let $\mathcal{B}\subset\mathbb{F}_2^{10}$ be the binary code of length $10$ formed by codewords of even weight, such that the first coordinate of every $b\in\mathcal{B}$ is $0$. By Lemma~\ref{LCODE} the set $\mathcal{Z}:=\{\Sigma(b)\colon b\in\mathcal{B}\}\subset\mathbb{R}^{10}$ spans a system of $256$ biangular lines with $A(\mathcal{Z})\subseteq\{\pm1/5,\pm3/5\}$. Next, we consider a maximum set $\mathcal{Y}\subset\mathbb{R}^{d-10}$ spanning $N_{1/3}(d-10)$ equiangular lines with $A(\mathcal{Y})\subseteq\{\pm1/3\}$. Let $\mathcal{E}$ denote the set of canonical basis vectors of $\mathbb{R}^{10}$, and let $o\in\mathbb{R}^{d-10}$ denote the zero vector. We claim that the following set $\mathcal{X}\subset\mathbb{R}^{d}$ spans a biangular line system:
\begin{multline*}
	\mathcal{X}:=\{[\sqrt6y,2e]/\sqrt{10}\colon y\in\mathcal{Y}, e\in\mathcal{E}\}\\
	{}\cup\{[\sqrt6y,-2e]/\sqrt{10}\colon y\in\mathcal{Y}, e\in\mathcal{E}\}\cup\{[o,z]\colon z\in\mathcal{Z}\}.
\end{multline*}
Indeed, since for $x,x'\in\mathcal{X}$, we have
\begin{equation*}
\left\langle x,x'\right\rangle\in\{3\left\langle y,y'\right\rangle/5\pm2\left\langle e,e'\right\rangle/5,\pm2\left\langle e,z\right\rangle/\sqrt{10},\left\langle z,z'\right\rangle\}
\end{equation*}
for some (not necessarily distinct) $e,e'\in\mathcal{E}$, $y,y'\in\mathcal{Y}$, and $z,z'\in\mathcal{Z}$. Since $\left\langle e,e'\right\rangle\in\{0,1\}$, $\left\langle e,z\right\rangle\in\{\pm1/\sqrt{10}\}$, $\left\langle y,y'\right\rangle\in\{\pm1/3,1\}$, and $\left\langle z,z'\right\rangle\in\{\pm1/5,\pm3/5,1\}$ the claim follows.
\end{proof}
\begin{corollary}
There exists a set $\mathcal{X}\subset\mathbb{R}^d$ spanning biangular lines with $A(\mathcal{X})\subseteq\{\pm1/5,\pm3/5\}$ for
\begin{equation*}
(d,|\mathcal{X}|)\in\{(10,256), (11,276), (12,296), (13,336), (15,456), (16,576), (17,816)\}.
\end{equation*}
\end{corollary}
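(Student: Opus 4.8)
The plan is to obtain every entry directly from Theorem~\ref{T2}. That theorem already produces, for each $d\ge 10$, a set $\mathcal{X}\subset\mathbb{R}^d$ of exactly $256+20N_{1/3}(d-10)$ biangular lines with the prescribed inner product set $A(\mathcal{X})\subseteq\{\pm1/5,\pm3/5\}$. Hence the entire content of the statement reduces to substituting the correct value of $N_{1/3}(d-10)$ into this formula for each listed dimension and checking that the result matches the claimed cardinality. No new construction is needed; the inner product guarantee is inherited verbatim from Theorem~\ref{T2}.

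First I would assemble the small equiangular-line counts required, namely $N_{1/3}(k)$ for the arguments $k=d-10\in\{0,1,2,3,5,6,7\}$. The value $N_{1/3}(0)=0$ is recorded in the text just before Proposition~\ref{P3}, and $N_{1/3}(7)=28$ is the classical number of equiangular lines meeting the absolute bound in $\mathbb{R}^7$, already quoted from \cite{LS}. The remaining values $N_{1/3}(1)=1$, $N_{1/3}(2)=2$, $N_{1/3}(3)=4$, $N_{1/3}(5)=10$, and $N_{1/3}(6)=16$ are standard entries of the Lemmens--Seidel table for the angle $\arccos(1/3)$; for instance $N_{1/3}(3)=4$ is realised by the four main diagonals of the cube, and the larger values by the corresponding maximal configurations. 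I would simply cite \cite{LS} for all of these.

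Second, I would record the seven resulting arithmetic identities, $256+20\cdot0=256$, $256+20\cdot1=276$, $256+20\cdot2=296$, $256+20\cdot4=336$, $256+20\cdot10=456$, $256+20\cdot16=576$, and $256+20\cdot28=816$, which match the pairs $(10,256),(11,276),(12,296),(13,336),(15,456),(16,576),(17,816)$, respectively. Since Theorem~\ref{T2} supplies both the construction and the bound $A(\mathcal{X})\subseteq\{\pm1/5,\pm3/5\}$ in each case, this completes the verification.

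There is essentially no obstacle here: the corollary is a pure bookkeeping consequence of Theorem~\ref{T2}. The only point demanding care is quoting the small numbers $N_{1/3}(k)$ correctly, since these are the sole external inputs and any slip would propagate directly into a wrong cardinality. I would also note that the omission of $d=14$ from the list is intentional: there Theorem~\ref{T2} would give only $256+20N_{1/3}(4)\le 376$, whereas Proposition~\ref{P3} already furnishes $392$ biangular lines in $\mathbb{R}^{14}$, so that dimension is handled better elsewhere.
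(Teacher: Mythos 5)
Your proposal is correct and is exactly the paper's proof: the paper disposes of this corollary in one line by combining Theorem~\ref{T2} with the Lemmens--Seidel values of $N_{1/3}$ (citing \cite[Theorem~4.5]{LS}), which is precisely the substitution-and-arithmetic argument you carry out, with the small values $N_{1/3}(0),\dots,N_{1/3}(3),N_{1/3}(5),N_{1/3}(6),N_{1/3}(7)$ all quoted correctly. Your closing observation about why $d=14$ is omitted (since $256+20N_{1/3}(4)=376<392$, the value from Proposition~\ref{P3}) is also consistent with the paper.
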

\begin{proof}
Combine Theorem~\ref{T2} with \cite[Theorem~4.5]{LS}.
\end{proof}
Finally, we note that various cross-sections of the Leech lattice $\Lambda_{24}$ (see \cite[p.\ 133]{SPLAG} for how to construct its shortest vectors from the extended binary Golay code \cite{AEB} in explicit form) gives rise to biangular line systems with inner product set $\{0,\pm1/3\}$.
\begin{theorem}\label{T3}
There exists a set $\mathcal{X}\subset\mathbb{R}^d$ spanning biangular lines with $A(\mathcal{X})\subseteq\{0,\pm1/3\}$ for $(d,|\mathcal{X}|)\in\{(21,896),(22,1408),(23,2300)\}$.
\end{theorem}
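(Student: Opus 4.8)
The plan is to realize all three systems as cross-sections of the set $\Lambda$ of minimal vectors of the Leech lattice $\Lambda_{24}$, normalized so that $\langle v,v\rangle=4$ for every $v\in\Lambda$; the only genuinely used input is that $\Lambda_{24}$ is even with minimum norm $4$, so it contains no nonzero vector of squared length $2$. For the $(23,2300)$ case I would fix $v\in\Lambda$ and set $W:=\{w\in\Lambda\colon\langle v,w\rangle=2\}$, which has $|W|=4600$ from the explicit Golay-code coordinates \cite[p.~133]{SPLAG}. For $w\in W$ put $u_w:=(w-v/2)/\sqrt3$; then $\langle u_w,v\rangle=0$, so $u_w\in v^\perp\cong\mathbb{R}^{23}$, and $|w-v/2|^2=|w|^2-\langle w,v\rangle+\tfrac14|v|^2=4-2+1=3$, so $u_w$ is a unit vector. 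The involution $w\mapsto v-w$ preserves $W$ and sends $u_w$ to $-u_w$, and $u_{w_1}=\pm u_{w_2}$ forces $w_2\in\{w_1,v-w_1\}$; hence the $u_w$ span exactly $4600/2=2300$ distinct lines.

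The biangularity is where the no-norm-$2$ property does the work. For $w_1,w_2\in W$ one has $\langle u_{w_1},u_{w_2}\rangle=(\langle w_1,w_2\rangle-1)/3$, and $\langle w_1,w_2\rangle\in\{0,\pm1,\pm2,\pm4\}$ since both vectors are minimal. The values $\pm4$ give $w_2=\pm w_1$ and the value $-2$ gives $w_2=v-w_1$, all of which are the same line, while $\{2,1,0\}$ produce inner products $\{1/3,0,-1/3\}$. The only remaining possibility threatening biangularity is $\langle w_1,w_2\rangle=-1$ (which would give $-2/3$); but then $v-w_1-w_2$ has squared length $4+4+4-4-4-2=2$, impossible in $\Lambda_{24}$. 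Hence $A(\mathcal{X})\subseteq\{0,\pm1/3\}$ for the $2300$-line system.

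For the lower-dimensional systems I would exploit that the stabilizer of $v$ in $\mathrm{Aut}(\Lambda_{24})$ is $\mathrm{Co}_2$, acting on these $2300$ lines as a rank-$3$ group; equivalently, declaring two lines adjacent when they are orthogonal (inner product $0$) turns the $2300$ lines into a strongly regular graph. From the $\mathrm{Co}_2$ subdegrees $1,891,1408$ one reads off that orthogonality is the relation of valency $1408$, with parameters $(2300,1408,840,896)$. Two cross-sections then finish the proof: fixing one line $\ell_0$ and keeping the $1408$ lines orthogonal to it gives a biangular system inside $\ell_0^\perp\cong\mathbb{R}^{22}$, realizing $(22,1408)$; fixing instead two lines $\ell_0,\ell_1$ at angle $\arccos(1/3)$ and keeping the lines orthogonal to both gives, since such a pair is nonadjacent in the orthogonality graph, exactly $\mu=896$ lines inside the $21$-dimensional space $\{\ell_0,\ell_1\}^\perp$, realizing $(21,896)$. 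Each sub-collection is automatically biangular with $A\subseteq\{0,\pm1/3\}$, being a subset of the original $2300$ lines; in lattice language these are just the minimal vectors $w\in W$ with prescribed inner products to one, respectively two, further fixed elements of $W$.

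The geometric verification of biangularity is elementary once the no-norm-$2$ fact is invoked, so the real obstacle is the \emph{enumeration}: confirming $|W|=4600$ and, above all, pinning down the subdegrees $1408$ and $896$. I would secure these either by quoting the rank-$3$ parameters of the $\mathrm{Co}_2$ action, or — to keep the argument self-contained — by computing the minimal-vector counts directly from the Golay-code description, i.e.\ counting $w\in W$ with a prescribed inner product to one, then two, fixed vectors of $W$. The delicate point is getting the last count right: the $896$ arises as the number of common \emph{orthogonal} neighbours of a pair of lines at angle $\arccos(1/3)$ (the parameter $\mu$), not of an orthogonal pair (which would give $\lambda=840$), so the correct reduction to $\mathbb{R}^{21}$ uses a nonorthogonal reference pair.
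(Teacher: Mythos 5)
Your proposal is correct, and for the pair $(21,896)$ it takes a route that is genuinely different from the paper's --- and, importantly, your route is the right one. For $(23,2300)$ and $(22,1408)$ your argument coincides with the paper's up to normalization: the paper works with unit vectors, fixes $\ell$, takes the $4600$ vectors $y$ with $\left\langle \ell,y\right\rangle=1/2$, and forms $(2y-\ell)/\sqrt3$, which is exactly your $u_w=(w-v/2)/\sqrt3$ construction; then it, too, obtains the $22$-dimensional system by keeping the lines orthogonal to one fixed line. A pleasant bonus of your write-up is the self-contained exclusion of the inner product $-1$ (the paper's $-1/4$) via the absence of norm-$2$ vectors; the paper merely asserts $\left\langle y,y'\right\rangle\not\in\{-1/4,-1\}$.

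The essential difference is the $21$-dimensional cross-section. The paper takes an \emph{orthogonal} pair $x,x'$ and keeps the lines orthogonal to both, whereas you insist on a pair at angle $\arccos(1/3)$; your ``delicate point'' remark is exactly on target, and the paper's choice is in error. This can be verified with the paper's own machinery. The $2300$-line system meets the relative bound \eqref{RELBD} in $\mathbb{R}^{23}$, so by the equality conditions of Theorem~\ref{RELBDS} its antipodal double is a spherical $5$-design; the second-moment identity $2+2A/9=4600/23$ gives $A=891$ lines at angle $\arccos(1/3)$ and $2299-891=1408$ orthogonal lines through each line (confirming the $(22,1408)$ step and the valency of the orthogonality relation). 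Now the $1408$ lines orthogonal to a fixed line again meet the relative bound, this time in $\mathbb{R}^{22}$ (the bound evaluates to exactly $1408$ for $\{0,\pm1/3\}$), so their double is a $5$-design in $S^{21}$, and the same moment identity $2+2A'/9=2816/22$ gives $A'=567$, hence $1407-567=840$ orthogonal lines through each line of this subsystem. In other words, an orthogonal pair has exactly $\lambda=840$ common orthogonal neighbours --- the orthogonality graph is the strongly regular graph with parameters $(2300,1408,840,896)$ --- so the paper's set $\mathcal{V}$ has $840$ elements, not the claimed $896$. A nonorthogonal pair has $\mu=1408\cdot 567/891=896$ common orthogonal neighbours (by double counting, or by the rank-$3$ transitivity of $\mathrm{Co}_2$ you invoke), and these span the claimed system in the $21$-dimensional complement, exactly as you describe. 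Your remaining debt --- pinning down the counts $4600$, $1408$, $896$ --- is no worse than the debt the paper itself incurs, and either of your two suggested routes discharges it; the theorem statement is thus true, but via your cross-section, not the paper's.
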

\begin{proof}
Let $\mathcal{L}\subset\mathbb{R}^{24}$, $|\mathcal{L}|=196560$, be the set of shortest vectors of $\Lambda_{24}$, where the vectors are normalized so that $\left\langle \ell,\ell\right\rangle=1$ for every $\ell\in\mathcal{L}$. With this convention, $\left\langle \ell, \ell'\right\rangle\in\{0,\pm 1/4,\pm1/2,\pm1\}$ for every $\ell,\ell'\in\mathcal{L}$. Now let $\ell\in\mathcal{L}$ be fixed. It is well-known (see \cite[p.\ 264]{SPLAG}) that the following subset $\mathcal{Y}=\{y\colon\left\langle \ell,y\right\rangle = 1/2; y\in\mathcal{L}\}$ contains $4600$ vectors, independently of the choice $\ell$. Note that for $y\in\mathcal{Y}$ we have $\ell-y\in\mathcal{Y}$ and therefore the set $\mathcal{Z}:=\{(2y-\ell)/\sqrt3\colon y\in\mathcal{Y}\}$ is antipodal, and $\left\langle \ell,z\right\rangle=0$ for every $z\in\mathcal{Z}$. Finally, let $\mathcal{X}\subset\mathcal{Z}$ with $|\mathcal{X}|=2300$ so that $\mathcal{Z}=\{x\colon x\in\mathcal{X}\}\cup\{-x\colon x\in\mathcal{X}\}$. Now $\mathcal{X}$ spans the claimed biangular line system in $\mathbb{R}^{23}$, since $\left\langle y,y'\right\rangle\not\in\{-1/4,-1\}$ and therefore for $x,x'\in\mathcal{X}$ we have $\left\langle x,x'\right\rangle=(4\left\langle y,y'\right\rangle-1)/3\in\{0,\pm1/3,1\}$. Let $x,x'\in \mathcal{X}$ so that $\left\langle x,x'\right\rangle = 0$. Then the cross-sections $\mathcal{U}:=\{u\colon \left\langle u,x\right\rangle = 0; u\in\mathcal{X}\}$, $\mathcal{V}:=\{v\colon \left\langle v,x\right\rangle = \left\langle v,x'\right\rangle = 0; v\in\mathcal{X}\}$ span the claimed biangular line systems in dimension $22$ and $21$, respectively.
\end{proof}
Another way to get biangular lines with set of inner products $\{0,\pm1/3\}$ is the following.
\begin{lemma}\label{LSPEC}
Let $w\equiv3\ (\mymod 4)$ and $d\geq 2w+1$ be positive integers. Let $\mathcal{B}\subset\mathbb{F}_2^d$ be a binary constant weight code of length $d$, weight $w$ and minimum distance $2w-2$, and assume that there exists a Hadamard matrix of order $w+1$. Then there exists a set $\mathcal{X}\subset\mathbb{R}^d$ with $|\mathcal{X}|=(w+1)|\mathcal{B}|$ spanning a biangular line system with $A(\mathcal{X})\subseteq\{0,\pm1/w\}$.
\end{lemma}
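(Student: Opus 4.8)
The plan is to attach to each codeword a copy of the regular simplex extracted from the Hadamard matrix, placing it on the coordinates in the codeword's support, and then to check that the constant weight together with the minimum distance keeps every cross-codeword inner product in $\{0,\pm1/w\}$. The first step is to turn the Hadamard matrix into a simplex. Given the Hadamard matrix $H$ of order $w+1$, I would normalize it (by negating rows, which preserves the Hadamard property) so that its first column is all ones, and write its rows as $r_0,\dots,r_w\in\{\pm1\}^{w+1}$. Deleting the first coordinate, set $s_i:=((r_i)_1,\dots,(r_i)_w)\in\{\pm1\}^w$. Because the first entries all equal $+1$, the relation $HH^\top=(w+1)I$ gives $\langle s_i,s_i\rangle=w$ and $\langle s_i,s_j\rangle=-1$ for $i\ne j$; hence the $w+1$ unit vectors $u_i:=s_i/\sqrt w$ have pairwise inner product $-1/w$ and form the vertices of a regular simplex inscribed in the unit sphere of $\mathbb{R}^w$.

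Next I would distribute one such simplex over the support of every codeword. For each $b\in\mathcal B$ fix a bijection between $\mathrm{supp}(b)$ (which has size $w$, since $d\geq 2w+1\geq w$ the support fits) and $\{1,\dots,w\}$, and embed the vectors $u_0,\dots,u_w$ into the $w$ coordinates indexed by $\mathrm{supp}(b)$, setting the remaining $d-w$ coordinates to zero. This yields $(w+1)|\mathcal B|$ unit vectors; call their collection $\mathcal X$. All pairwise inner products computed below lie in $\{0,\pm1/w\}$, and since $w\geq 3$ gives $1/w<1$, no vector is a scalar multiple of another, so the lines are pairwise distinct and $|\mathcal X|=(w+1)|\mathcal B|$ exactly.

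Finally I would compute the inner products in three cases. Two vectors arising from the \emph{same} codeword inherit the simplex value $-1/w$. For distinct codewords $b\ne b'$, constant weight $w$ gives $\mydist(b,b')=2w-2|\mathrm{supp}(b)\cap\mathrm{supp}(b')|$, so the hypothesis $\mydist(b,b')\geq 2w-2$ forces $|\mathrm{supp}(b)\cap\mathrm{supp}(b')|\leq 1$. If the supports are disjoint, the embedded vectors have disjoint supports and inner product $0$; if the supports meet in exactly one coordinate, the inner product reduces to a single product of two entries from $\{\pm1/\sqrt w\}$, hence equals $\pm1/w$. Therefore $A(\mathcal X)\subseteq\{0,\pm1/w\}$ and $-1\notin A(\mathcal X)$, so $\mathcal X$ spans a biangular line system of the claimed size. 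The arithmetic hypothesis $w\equiv3\ (\mymod 4)$ enters only through $w+1\equiv0\ (\mymod 4)$, the necessary order condition matching the assumed Hadamard matrix.

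I do not expect a genuine obstacle here; the content is essentially bookkeeping. The two points worth stating carefully are that deleting a normalized column of an order-$(w+1)$ Hadamard matrix produces a regular simplex with the single inner product $-1/w$, and that the minimum distance $2w-2$ is precisely the threshold capping pairwise support overlaps at one, so that overlapping codewords contribute only an isolated $\pm1/w$ rather than a sum of several terms that could leave the set $\{0,\pm1/w\}$.
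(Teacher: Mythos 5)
Your proof is correct and follows essentially the same construction as the paper: delete a column of the Hadamard matrix of order $w+1$ to obtain $w+1$ unit vectors in $\mathbb{R}^w$ with pairwise inner products $\pm 1/w$, plant one copy on the support of each codeword, and use the minimum distance $2w-2$ to cap support overlaps at one. The only (immaterial) differences are that you sign-normalize the deleted column so the planted vectors form a regular simplex with the single inner product $-1/w$, and you spell out the overlap case analysis that the paper leaves implicit.
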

\begin{proof}
Recall that a Hadamard matrix $H$ of order $w+1$ is a $(w+1)\times(w+1)$ orthogonal matrix with entries $\pm1/\sqrt{w+1}$. Let $H'$ be the matrix obtained from $H$ after removing its first column, and renormalizing its rows. Let $\mathcal{H}\subset\mathbb{R}^{w}$ be the set of rows of $H'$. Clearly, $\left\langle h,h'\right\rangle \in\{\pm1/w,1\}$ for $h,h'\in\mathcal{H}$. Now $\mathcal{X}$ can be obtained by replacing each codeword $b\in\mathcal{B}$ with a set of $w+1$ real vectors where the support of $b$ (i.e., coordinates with binary $1$) are replaced by the entries of $h\in\mathcal{H}$, and coordinates with binary $0$ are replaced by $0\in\mathbb{R}$. Since $d\geq 2w+1$, there are no two codewords at Hamming distance $d$, and therefore the claim follows.
\end{proof}
\begin{corollary}\label{conseq}
For $d\geq 7$ there exists a set $\mathcal{X}\subset\mathbb{R}^d$ spanning $|\mathcal{X}|=4\left\lceil(d-1)(d-2)/6\right\rceil$ biangular lines with $A(\mathcal{X})\subseteq\{0,\pm1/3\}$. Furthermore, there exists a set $\mathcal{Y}\subset\mathbb{R}^{d+1}$ spanning $|\mathcal{X}|$ biangular lines with $A(\mathcal{Y})\subseteq\{\pm1/7,\pm3/7\}$.
\end{corollary}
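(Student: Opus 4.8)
The plan is to prove the first claim by invoking Lemma~\ref{LSPEC} with $w=3$, and then to obtain the second claim by lifting the resulting configuration one dimension higher through Proposition~\ref{P2}.

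For the first part I would verify the three hypotheses of Lemma~\ref{LSPEC} for $w=3$: the congruence $3\equiv 3\pmod 4$ holds, the dimension constraint $d\geq 2w+1=7$ is exactly the standing assumption $d\geq 7$, and a Hadamard matrix of order $w+1=4$ exists (for instance the Kronecker square of the order-$2$ Hadamard matrix). The substantive point is the existence of a binary constant weight code $\mathcal{B}\subset\mathbb{F}_2^d$ of weight $3$ and minimum distance $4$ with exactly $\lceil(d-1)(d-2)/6\rceil$ codewords. Identifying each weight-$3$ codeword with the $3$-subset of $\{1,\dots,d\}$ given by its support, two codewords are at Hamming distance $2(3-t)$ where $t$ is the size of the intersection of the two supports, so minimum distance $4$ is equivalent to requiring $t\leq 1$ for all pairs, i.e.\ $\mathcal{B}$ is a partial Steiner triple system (a triple packing) on $d$ points. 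I would then recall the classical value of the maximum triple packing number $P(d)$ and check, splitting into the six residue classes of $d$ modulo $6$, that $\lceil(d-1)(d-2)/6\rceil\leq P(d)$ for every $d\geq 7$; since every sub-collection of a packing is again a packing, deleting blocks from a maximum packing produces one of exactly the required size. Lemma~\ref{LSPEC} then yields $\mathcal{X}$ with $|\mathcal{X}|=(w+1)|\mathcal{B}|=4\lceil(d-1)(d-2)/6\rceil$ and $A(\mathcal{X})\subseteq\{0,\pm1/3\}$.

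For the second part I would observe that the set $\mathcal{X}$ just constructed is, as a set of unit vectors, a spherical $3$-distance set with $A(\mathcal{X})\subseteq\{0,\pm1/3\}$, so it satisfies the hypotheses of Proposition~\ref{P2} with the labeling $\alpha=-1/3$, $\beta=0$, $\gamma=1/3$ (so that $\alpha+\beta-\gamma=-2/3$): indeed $\alpha+\beta=-1/3<0$ and $A(\mathcal{X})\subseteq\{-1/3,0,1/3,-2/3\}$. A direct substitution gives $2-\alpha-\beta=7/3$, hence $\frac{\alpha-\beta}{2-\alpha-\beta}=-1/7$ and $\frac{2\gamma-\alpha-\beta}{2-\alpha-\beta}=3/7$, so the lifted set $\mathcal{Y}\subset\mathbb{R}^{d+1}$ spans biangular lines with $A(\mathcal{Y})\subseteq\{\pm1/7,\pm3/7\}$. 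Since the lifting $x\mapsto[\sqrt{-\alpha-\beta},\sqrt2\,x]/\sqrt{2-\alpha-\beta}$ is injective, $|\mathcal{Y}|=|\mathcal{X}|$, as required.

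The main obstacle is the combinatorial existence in the first part, namely confirming that the target cardinality $\lceil(d-1)(d-2)/6\rceil$ never exceeds the maximum triple packing number $P(d)$. This is where the ceiling and the residue of $d$ modulo $6$ genuinely interact, and a short but slightly awkward case analysis is needed, together with the exact packing number rather than the crude pair-counting bound $d(d-1)/6$. In particular, the naive attempt to place all triples on only $d-1$ of the coordinates fails precisely for $d\equiv 1\pmod 6$, where the maximum packing on $d-1$ points has only $(d-1)(d-3)/6<(d-1)(d-2)/6$ blocks, so the full point set of size $d$ must be used. By contrast, once $\mathcal{X}$ is in hand the lifting step is an entirely routine verification.
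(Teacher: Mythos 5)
Your proposal is correct, and its skeleton coincides with the paper's proof: both parts are obtained exactly as you propose, by specializing Lemma~\ref{LSPEC} to $w=3$ (with the order-$4$ Hadamard matrix) and then lifting via Proposition~\ref{P2} with $(\alpha,\beta,\gamma)=(-1/3,0,1/3)$, which indeed gives the inner products $\pm1/7,\pm3/7$. The only genuine difference lies in how the required constant weight code is produced. The paper cites the averaging argument of \cite[Theorem~14]{IEEECODES} (the Graham--Sloane construction): partition all $\binom{d}{3}$ triples into $d$ classes according to the sum of their elements modulo $d$; each class has pairwise support intersections at most $1$, so the largest class is a weight-$3$, distance-$4$ code of size at least $\binom{d}{3}/d=(d-1)(d-2)/6$, and since the size is an integer the ceiling $\left\lceil(d-1)(d-2)/6\right\rceil$ comes for free, with no case analysis. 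You instead invoke the exact maximum partial triple system (packing) numbers of Spencer/Sch\"onheim and verify $\lceil(d-1)(d-2)/6\rceil\leq P(d)$ in each residue class modulo $6$. Your route works (your residue-by-residue inequality does hold for all $d\geq 7$, and your observation that restricting to $d-1$ points fails when $d\equiv1\pmod 6$ is a nice sanity check), but it is heavier: it requires quoting the exact packing numbers, whereas the averaging bound is elementary, needs only a pigeonhole step, and explains directly why the ceiling of $(d-1)(d-2)/6$ is the natural quantity appearing in the statement.
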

\begin{proof}
Indeed, this is a specialization of Lemma~\ref{LSPEC} for $w=3$ and using constant weight codes coming from the averaging argument in \cite[Theorem~14]{IEEECODES}. The second part of the claim is an immediate consequence of Proposition~\ref{P2}.
\end{proof}
While Corollary~\ref{conseq} is weaker than Theorem~\ref{T1}, it can be used in two ways. First, one may embed the $2300$ biangular lines from Theorem~\ref{T3} into $\mathbb{R}^{23+d}$, and extend this configuration with an additional $4\left\lceil(d-1)(d-2)/6\right\rceil$ vectors (for $d\geq 7$). Secondly, it may happen that these configurations can be further extended to a spherical $4$-distance set with inner products $\{-2/3,-1/3,0,1/3\}$, and then an application of Proposition~\ref{P2} would immediately yield biangular lines with inner products $\{\pm 1/7,\pm3/7\}$ in $\mathbb{R}^{24+d}$. One consequence of the following result is that the two largest sets mentioned in Theorem~\ref{T3} are inextendible.
\begin{theorem}[The relative bound, \cite{DGS}]\label{RELBDS}
Let $d\geq 3$, and assume that $\mathcal{X}\subset\mathbb{R}^d$ spans a biangular line system with $A(\mathcal{X})\subseteq\{\pm\alpha,\pm\beta\}$, $0\leq \alpha,\beta <1$. Assume that $\alpha^2+\beta^2\leq 6/(d+4)$. Let $n_{\alpha}:=|\{[x,x']\colon \left\langle x,x'\right\rangle^2=\alpha^2;x,x'\in\mathcal{X}\}|$. Then
\begin{equation}\label{RELBD}
|\mathcal{X}|\leq \frac{d(d+2)(1-\alpha^2)(1-\beta^2)}{3-(d+2)(\alpha^2+\beta^2)+d(d+2)\alpha^2\beta^2}
\end{equation}
if the denominator of the right hand side is positive. Equality holds if and only if
\begin{equation*}
\begin{cases}
(\frac{6}{d+4}-\alpha^2-\beta^2)((\alpha^2-\beta^2)n_\alpha +|\mathcal{X}|(|\mathcal{X}|-1)\beta^2+|\mathcal{X}|-\frac{|\mathcal{X}|^2}{d})=0\\
(\frac{6}{d+4}-\alpha^2-\beta^2)(\alpha^2-\beta^2)n_\alpha=\frac{|\mathcal{X}|(d^2+3|\mathcal{X}|-4)}{(d+2)(d+4)}-|\mathcal{X}|(|\mathcal{X}|-1)\beta^2(\frac{6}{d+4}-\beta^2).
\end{cases}
\end{equation*}
\end{theorem}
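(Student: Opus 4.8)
The plan is to recognize \eqref{RELBD} as an instance of the linear-programming (positive semidefinite) method for spherical codes, applied to the even degree-four polynomial annihilating the inner product set. Concretely I would set
\[
F(t):=(t^2-\alpha^2)(t^2-\beta^2),
\]
so that $F$ vanishes at every off-diagonal value $\langle x,x'\rangle\in\{\pm\alpha,\pm\beta\}$ while $F(1)=(1-\alpha^2)(1-\beta^2)>0$. Writing $G_k$ for the Gegenbauer polynomials of the sphere $S^{d-1}$ normalized by $G_k(1)=1$, the addition formula underlying \cite{DGS} guarantees $\sum_{x,x'\in\mathcal{X}}G_k(\langle x,x'\rangle)\geq 0$ for every $k\geq 1$. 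Since $F$ is even of degree four it expands as $F=f_0+f_2G_2+f_4G_4$ with no odd terms, and the proof reduces to computing $f_0,f_2,f_4$ and then combining this positivity with the vanishing of $F$ off the diagonal.

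For the coefficients I would use the orthogonality measure $d\mu(t)\propto(1-t^2)^{(d-3)/2}\,dt$ and its even moments $\langle t^0\rangle=1$, $\langle t^2\rangle=1/d$, $\langle t^4\rangle=3/(d(d+2))$, $\langle t^6\rangle=15/(d(d+2)(d+4))$. The constant coefficient is the $\mu$-mean of $F$,
\[
f_0=\langle F\rangle=\frac{3}{d(d+2)}-\frac{\alpha^2+\beta^2}{d}+\alpha^2\beta^2=\frac{3-(d+2)(\alpha^2+\beta^2)+d(d+2)\alpha^2\beta^2}{d(d+2)},
\]
which is exactly $1/(d(d+2))$ times the denominator of the claimed bound. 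Computing $\langle F\cdot(dt^2-1)\rangle$ (recall $G_2(t)=(dt^2-1)/(d-1)$), the $\alpha^2\beta^2$ contributions cancel and $f_2$ comes out as a positive multiple of $\tfrac{6}{d+4}-(\alpha^2+\beta^2)$; hence $f_2\geq 0$ is equivalent to the standing hypothesis $\alpha^2+\beta^2\leq 6/(d+4)$. Finally $f_4>0$ holds automatically, since $F$ is monic of degree four while $G_0,G_2$ have lower degree, so the $t^4$ term must be supplied by $f_4G_4$ and $G_4$ has positive leading coefficient.

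With the coefficients in hand the bound is immediate. Summing $F$ over all ordered pairs and using that $F$ vanishes off the diagonal gives $\sum_{x,x'}F(\langle x,x'\rangle)=|\mathcal{X}|\,F(1)$, while the Gegenbauer expansion gives
\[
\sum_{x,x'\in\mathcal{X}}F(\langle x,x'\rangle)=f_0|\mathcal{X}|^2+f_2\sum_{x,x'}G_2(\langle x,x'\rangle)+f_4\sum_{x,x'}G_4(\langle x,x'\rangle)\geq f_0|\mathcal{X}|^2,
\]
the inequality using $f_2,f_4\geq 0$ together with the positivity above. Dividing by $f_0|\mathcal{X}|$, which is legitimate precisely when $f_0>0$, i.e.\ when the stated denominator is positive, yields $|\mathcal{X}|\leq F(1)/f_0$, and this is exactly \eqref{RELBD}.

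For the equality clause I would trace when the single inequality is tight: this forces both nonnegative terms to vanish, i.e.\ $f_2\sum_{x,x'}G_2=0$ and (since $f_4>0$) $\sum_{x,x'}G_4=0$. Using $\sum G_2=0\iff\sum_{x,x'}\langle x,x'\rangle^2=|\mathcal{X}|^2/d$ together with $\sum_{x\neq x'}\langle x,x'\rangle^2=(\alpha^2-\beta^2)n_\alpha+|\mathcal{X}|(|\mathcal{X}|-1)\beta^2$ reproduces the first displayed equation. For the second I would substitute $G_4(t)\propto t^4-\tfrac{6}{d+4}t^2+\tfrac{3}{(d+2)(d+4)}$ and the analogous identity $\sum_{x\neq x'}\langle x,x'\rangle^4=(\alpha^4-\beta^4)n_\alpha+|\mathcal{X}|(|\mathcal{X}|-1)\beta^4$; the $n_\alpha$-terms then combine into $(\tfrac{6}{d+4}-\alpha^2-\beta^2)(\alpha^2-\beta^2)n_\alpha$ and the pure $|\mathcal{X}|$-terms collapse to $\tfrac{|\mathcal{X}|(d^2+3|\mathcal{X}|-4)}{(d+2)(d+4)}$, giving the second displayed equation. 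The bound itself is a clean textbook application once the sign of $f_2$ is pinned to the threshold $6/(d+4)$; I expect the only genuine labor to be the equality bookkeeping — fixing the normalization of $G_4$ and matching the fourth-moment identity to the exact constants appearing in the statement.
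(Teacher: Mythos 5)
Your proposal is correct and follows exactly the approach underlying the paper's proof: the paper outsources the bound itself to \cite{DGS}, \cite{BOY} and only records the equality criterion $(\tfrac{6}{d+4}-\alpha^2-\beta^2)\sum_{x,x'}C_2^{((d-2)/2)}(\langle x,x'\rangle)=\sum_{x,x'}C_4^{((d-2)/2)}(\langle x,x'\rangle)=0$, which is precisely the condition $f_2\sum G_2=0$, $\sum G_4=0$ that falls out of your Gegenbauer expansion of $F(t)=(t^2-\alpha^2)(t^2-\beta^2)$. Your computation of $f_0$, the sign analysis of $f_2$ against the threshold $6/(d+4)$, and the translation of the vanishing Gegenbauer sums into the two displayed equations via the moment identities for $n_\alpha$ all check out, so you have in effect supplied the details the paper leaves to its references.
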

\begin{proof}
This result is well-known \cite{BOY}, \cite{DGS}. Equality holds if and only if  $(\frac{6}{d+4}-\alpha^2-\beta^2)\sum_{x,x'\in\mathcal{X}}C_2^{((d-2)/2)}(\left\langle x,x'\right\rangle)=\sum_{x,x'\in\mathcal{X}}C_4^{((d-2)/2)}(\left\langle x,x'\right\rangle)=0$, where $C_i^{(j)}(z)$ denotes the Gegenbauer polynomials (see \cite{DGS2}).
\end{proof}
\begin{remark}
If $\mathcal{X}$ forms a spherical $4$-design \cite{BAN9}, then equality holds in \eqref{RELBD}.
\end{remark}
\begin{remark}
If there is equality in \eqref{RELBD}, then the quantity $n_{\alpha}$ as defined in Theorem~\ref{RELBDS} is a nonnegative integer. The failure of this condition could be used to show the nonexistence of various hypothetical configurations. In particular, in $\mathbb{R}^8$ there does not exist $50$ biangular lines with set of inner products $\{\pm1/4,\pm1/2\}$.
\end{remark}
In Table~\ref{TABREL} we display data on the known biangular line systems meeting the relative bound, and later in Corollary~\ref{RELBDCOR} we prove that this list is (essentially) complete for $d\leq 6$. The canonical examples are mutually unbiased bases \cite{KREDOCK}, spanning $2^{4i-1}+2^{2i}$ biangular lines in dimension $d=4^{i}$ with inner products $\{0,\pm2^{-i}\}$, $i\geq1$. We believe that the following example is new.
\begin{example}[$36$ biangular lines in $\mathbb{R}^7$ with set of inner products $\{\pm1/7,\pm3/7\}$]
Let $U$ be the $7\times 7$ circulant matrix with first row $[0,1,0,0,0,0,0]$. Let $\mathcal{Y}:=\{[-7,1,1,1,1,1,1]$, $[-1, 3, 3, -3, 3, -3, -3]\}$, and let $\mathcal{Z}:=\{[1, -1, -3, 3, 3, -3, -3]$, $[1, 3, -1, -3, -3, -3, 3]$, $[-1, 3, -3, 1, -3, 3, -3]\}$.
Then, the following set
\begin{multline*}
\mathcal{X}=\{[-7,1,1,1,1,1,1,1]/\sqrt{56}\}\cup\{[1,yU^i]/\sqrt{56}\colon i\in\{0,1,\dots,6\};y\in\mathcal{Y}\}\\
{}\cup\{[3,zU^i]/\sqrt{56}\colon i\in\{0,1,\dots,6\};z\in\mathcal{Z}\}
\end{multline*}
spans $36$ biangular lines in $\mathbb{R}^7$ with $A(\mathcal{X})\subseteq\{\pm1/7,\pm3/7\}$. Indeed, all vectors are orthogonal to $[1,\dots,1]\in\mathbb{R}^8$. The parameters of this line system meet the relative bound.\qed
\end{example}
Despite our best efforts, we were unable to find any references to the following example.
\begin{example}[$256$ biangular lines in $\mathbb{R}^{16}$ with set of inner products $\{0,\pm1/3\}$]
Consider a biplane \cite{BIPLANEPAT} of order $4$, that is a $16\times 16$ square $\{0,1\}$-matrix $H$ with constant row and column sum $6$, such that $HH^T=4I_{16}+2J_{16}$. We may simply take $H:=(J_4-I_4)\otimes I_4+I_4\otimes(J_4-I_4)$, and let $\mathcal{H}\subset\mathbb{R}^{16}$ be the set of rows of $H$. Let $\mathcal{B}\subset\mathbb{F}_2^{6}$ be a binary code of length $6$ formed by codewords of even weight, such that the first coordinate of every $b\in\mathcal{B}$ is $0$. By Lemma~\ref{LCODE} the set $\mathcal{Z}:=\{\Sigma(b)\colon b\in\mathcal{B}\}\subset\mathbb{R}^{6}$ spans a system of $16$ biangular lines with $A(\mathcal{Z})\subseteq\{\pm1/3\}$. Replacing each codeword $b\in\mathcal{B}$ with a set of $16$ real vectors where the support of $b$ (i.e., coordinates with binary $1$) are replaced by the entries of $h\in\mathcal{H}$, and coordinates with binary $0$ are replaced by $0\in\mathbb{R}$ spans the claimed $256$ biangular lines in $\mathbb{R}^{16}$. The parameters of this line system meet the relative bound.\qed
\end{example}
\begin{table}[htbp]%
	\centering
	\caption{Biangular line systems meeting the relative bound}\label{TABREL}
	\tiny\begin{tabular}{llll}
		\toprule
		$d$ & $n$ & $\{\alpha,\beta\}$ & \text{Remark}\\
		\midrule
		3   &    6 & $\{\pm1/\sqrt5\}$ & \text{Icosahedron}\\
		&   10 & $\{\pm1/3,\pm\sqrt5/3\}$ & \text{Dodecahedron}\\
		4   &   12 & $\{0,\pm1/2\}$ & \text{$D_4$ lattice (MUBs)}\\
		6   &   27 & $\{\pm1/4,\pm1/2\}$ & \text{Schl\"afli graph}\\
		&   27 & $\{\pm1/4,\pm1/2\}$ & \text{Example~\ref{EX3}}\\
		&   36 & $\{0,\pm1/2\}$ & \text{$E_6$ lattice}\\
		7   &   28 & $\{\pm1/3\}$   & \text{Equiangular lines}\\
		&   36 & $\{\pm1/7,\pm3/7\}$ & \text{Example~\ref{EX1}}\\
		&   63 & $\{0,\pm1/2\}$ &  \text{$E_7$ lattice}\\
		8   &  120 & $\{0,\pm1/2\}$ &  \text{$E_8$ lattice}\\    
		16  &  144 & $\{0,\pm1/4\}$ & \text{MUBs}\\
		&  256 & $\{0,\pm1/3\}$            & \text{From a biplane of order $4$}\\
		22  &  275 & $\{\pm1/6,\pm1/4\}$ & \text{McLaughlin graph}\\
		& 1408 & $\{0,\pm1/3\}$ & \text{From $\Lambda_{24}$}\\
		23  &  276 & $\{\pm1/5\}$   & \text{Equiangular lines}\\
		& 2300 & $\{0,\pm1/3\}$ & \text{From $\Lambda_{24}$}\\
		$4^i$ & $2^{4i-1}+2^{2i}$ & $\{0,\pm2^{-i}\}$ & MUBs, $i\geq 3$\\
		\bottomrule
	\end{tabular}
\end{table}
Finally, we note the following (almost immediate) consequence of \cite[Theorem~5.2 and 5.3]{NOZ}.
\begin{theorem}[See \cite{NOZ}]
Let $d\geq 5$, and let $\mathcal{X}\subset\mathbb{R}^d$ span a maximum biangular line system with $A(\mathcal{X})\subseteq\{\pm\alpha,\pm\beta\}$, $0\leq \alpha < \beta < 1$. Then $z:=(1-\alpha^2)/(\beta^2-\alpha^2)$ is an integer. Furthermore $z\leq\left\lfloor 1/2+\sqrt{(d^2+d+2)(d^2+d-1)/(4d^2+4d-8)}\right\rfloor$.
\end{theorem}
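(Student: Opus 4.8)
The plan is to recognize the statement as the specialization, to the spherical two-distance set attached to a biangular line system, of the Larman--Rogers--Seidel integrality phenomenon in the refined form of \cite{NOZ}. First I would pass from the line system $\mathcal{X}$ to the orthogonal projections $P_x:=xx^T\in\mathrm{Sym}(d)$. Since $\left\langle P_x,P_{x'}\right\rangle=\left\langle x,x'\right\rangle^2\in\{\alpha^2,\beta^2\}$ for distinct lines while $\left\langle P_x,P_x\right\rangle=1$, the centered matrices $E_x:=P_x-\tfrac1d I$ have common norm $\sqrt{1-1/d}$ and lie in the $\left(\tfrac{d(d+1)}2-1\right)$-dimensional space of traceless symmetric matrices, taking exactly two inner products. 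Thus the rescaled $E_x/\sqrt{1-1/d}$ form a spherical two-distance set on $S^{N-1}$ with $N=\tfrac{d(d+1)}2-1$, and a direct computation shows that its Larman--Rogers--Seidel ratio $(1-s_1)/(s_2-s_1)$, where $s_1<s_2$ are the two inner products, equals precisely $z=\tfrac{1-\alpha^2}{\beta^2-\alpha^2}$. This is the quantity whose integrality and boundedness must be established.

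For the integrality I would encode the larger distance combinatorially. Writing $A$ for the adjacency matrix of the graph on $\mathcal{X}$ whose edges are the pairs with $\left\langle x,x'\right\rangle^2=\beta^2$, the Gram matrix of the $E_x$ is $\tilde G=(1-\alpha^2)I+(\alpha^2-\tfrac1d)J+(\beta^2-\alpha^2)A$, which is positive semidefinite of rank at most $N$. Restricting to $\mathbf 1^\perp$, where $J$ vanishes, shows that every vector of $\ker\tilde G\cap\mathbf 1^\perp$ is a $(-z)$-eigenvector of $A$; hence $-z$ is an eigenvalue of the integer symmetric matrix $A$ with multiplicity $m\geq|\mathcal{X}|-\tfrac{d(d+1)}2$. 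Since $A$ is rational, any irrational Galois conjugate of $-z$ would be an eigenvalue of equal multiplicity, forcing $2m\leq|\mathcal{X}|$; but as $\mathcal{X}$ is a maximum system, $|\mathcal{X}|\geq 2(d-1)(d-2)$ by Theorem~\ref{T1}, which exceeds $d(d+1)$ for $d\geq 7$, so $2m>|\mathcal{X}|$ and $-z$ must be rational, hence an integer. This is the content of Nozaki's Theorem~5.2; the two residual cases $d\in\{5,6\}$, where the size bound alone is too weak, are handled by the explicit maximum configurations ($24$ and $40$ lines, each with $z=3$).

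For the upper bound on $z$ I would use that a sufficiently large spherical two-distance set is a spherical $2$-design, whence its $\beta^2$-graph is strongly regular. The eigenvalue $-z$ then faces a single other restricted eigenvalue whose eigenspace embeds into the degree-two harmonic space, so its multiplicity is at most $N$. Feeding this constraint together with the feasibility relations (the trace identities $\mathrm{tr}\,A=0$ and $\mathrm{tr}\,A^2=|\mathcal{X}|k$, and the integrality and nonnegativity of the two multiplicities) into an optimization over the admissible parameters yields, after eliminating $|\mathcal{X}|$ and $k$, the quadratic inequality $z(z-1)\leq\frac{[d(d+1)/2]^2}{d(d+1)-2}$. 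Completing the square and taking the floor rearranges this into the stated bound $z\leq\left\lfloor\tfrac12+\sqrt{\tfrac{(d^2+d+2)(d^2+d-1)}{4d^2+4d-8}}\right\rfloor$, which is Nozaki's Theorem~5.3.

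The main obstacle is not a single computation but verifying that Nozaki's structural hypotheses genuinely hold here: that the attached two-distance set is large enough relative to $N=\tfrac{d(d+1)}2-1$ to force the $2$-design (hence strong regularity) property and to exclude the conference-graph alternative that would make $z$ an irrational quadratic integer. This is exactly where maximality enters, through the lower bound $|\mathcal{X}|\geq 2(d-1)(d-2)$ of Theorem~\ref{T1}, and it explains the restriction to $d\geq 5$: already for $d=4$ the maximum system has $z=4$, which violates the displayed bound (it would read $z\leq 2$). Once the hypotheses are in place the two conclusions are read off from \cite{NOZ}, and the remaining work is the bookkeeping of translating the data $\{\alpha,\beta\}$ and the dimension $d$ into Nozaki's normalization.
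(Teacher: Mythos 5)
Your proposal is correct and takes essentially the same route as the paper: both reduce the statement to \cite[Theorems 5.2 and 5.3]{NOZ}, verify the required cardinality hypothesis via $|\mathcal{X}|\geq 2(d-1)(d-2)>d(d+1)$ (Theorem~\ref{T1}) for $d\geq 7$, and settle the residual cases $d\in\{5,6\}$ by the classified maximum configurations of Theorems~\ref{THMDIM5} and~\ref{THMDIM6}, whose inner product set $\{\pm 1/5,\pm 3/5\}$ gives $z=3$, an integer meeting the stated bound. The extra material you supply---the embedding $x\mapsto xx^T-\tfrac1d I$ into traceless symmetric matrices and the Galois-conjugate multiplicity argument---is a sound reconstruction of Nozaki's integrality proof, while your sketch of the upper bound is less faithful (a large spherical two-distance set need not be a spherical $2$-design, nor need its graph be strongly regular); but since you, like the paper, ultimately read both conclusions off \cite{NOZ} after verifying its hypothesis, this inaccuracy does not affect correctness.
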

\begin{proof}
The statement is a reformulation of \cite[Theorem~5.2 and 5.3]{NOZ} and it holds whenever $|\mathcal{X}|\geq d(d+1)$. This in turn holds by Theorem~\ref{T1} for maximum biangular line systems whenever $d\geq 7$. For $d\in\{5,6\}$ the set of inner products of (the unique) maximum biangular line systems is $\{\pm1/5,\pm3/5\}$ (see Theorem~\ref{THMDIM5} and \ref{THMDIM6}), and therefore in these cases $z=3$ is indeed an integer below the claimed bound.
\end{proof}
%%%%%%%%%%%%%%%%%%%%%%%%%%%%%%%%%%%%%%
% SECTION 3: COMPUTATIONAL FRAMEWORK %
%%%%%%%%%%%%%%%%%%%%%%%%%%%%%%%%%%%%%%
\section{Computational framework}\label{sec3}
In this section, following ideas developed in \cite{SZO}, we set up a framework for systematically generating biangular lines. We will leverage on this newly established theory in Section~\ref{sec4} where we demonstrate how to use this approach in practice. In particular, we will determine the size of the largest biangular line systems in dimension $d\leq 6$ by using supercomputational resources, and classify the maximum cases.

We remark that this framework carries over to the multiangular setting after minor technical changes (see Section~\ref{sec5} and Appendix~\ref{APPENDIX1}).
\subsection{A high level overview}
Let $d,n\geq1$, and let $\mathcal{X}=\{x_1,\dots,x_n\}\subset\mathbb{R}^d$ be a set of unit vectors, spanning a system of $n$ biangular lines. Starting from this section, we will represent $\mathcal{X}$ by its Gram matrix $G:=[\left\langle x_i,x_j\right\rangle]_{i,j=1}^n$. Conveniently, the matrix $G$ is invariant up to change of basis, and has the following combinatorial properties: $G$ is of $n\times n$; $G=G^T$; $G_{ii}=1$ for every $i\in\{1,\dots,n\}$, and $G_{i,j}\in A(\mathcal{X})$ for distinct $i,j\in\{1,\dots,n\}$. Furthermore, it has the following algebraic properties: $G$ is positive-semidefinite; and $\myrank G \leq d$. Conversely, starting from any matrix $G$ having these properties, one may reconstruct an $n\times \myrank{G}$ matrix $F$ (uniquely, up to change of basis) via the Cholesky decomposition so that $FF^T=G$ holds \cite{NJH}.

Our aim is to find a way for generating all (sufficiently large) $n\times n$ Gram matrices of biangular line systems in a fixed dimension $d$. It follows from Ramsey theory that $n$ is bounded in terms of $d$, and we recall here the following explicit bound.
\begin{theorem}[Absolute bound, \cite{DGS}]\label{T4}
Let $\mathcal{X}\subset\mathbb{R}^d$ span a biangular line system. Then $|\mathcal{X}|\leq \binom{d+3}{4}$.
\end{theorem}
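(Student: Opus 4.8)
The plan is to use the classical ``absolute bound'' argument via the polynomial (linear-algebra) method: attach to each line an annihilator polynomial, embed these polynomials into a space of homogeneous polynomials of controlled dimension, and read off the cardinality bound from a linear-independence count.

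Write $A(\mathcal{X})\subseteq\{\pm\alpha,\pm\beta\}$ with $0\le\alpha,\beta<1$; the strict inequality holds because the $x_i$ are distinct unit vectors spanning lines, so no inner product of distinct members equals $\pm1$. First I would attach to each unit vector $x_i$ the polynomial in the auxiliary variable $y\in\mathbb{R}^d$ given by
\[
F_i(y):=\bigl(\langle x_i,y\rangle^2-\alpha^2\bigr)\bigl(\langle x_i,y\rangle^2-\beta^2\bigr).
\]
The point of this choice is the interpolation (delta) property: since $\langle x_i,x_j\rangle^2\in\{\alpha^2,\beta^2\}$ for $i\neq j$, one of the factors vanishes and hence $F_i(x_j)=0$, whereas $F_i(x_i)=(1-\alpha^2)(1-\beta^2)\neq0$ because $\alpha,\beta<1$. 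Thus $F_i(x_j)=c\,\delta_{ij}$ with $c\neq0$.

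The next step is to locate the $F_i$ inside a space whose dimension I can control. Expanding, $F_i$ is a linear combination of $\langle x_i,y\rangle^4$, $\langle x_i,y\rangle^2$, and the constant $1$. Restricting to the unit sphere and using $\|y\|^2=1$ there, I would homogenize the lower-degree terms, replacing $F_i$ by the genuinely homogeneous degree-$4$ polynomial
\[
\widetilde F_i(y):=\langle x_i,y\rangle^4-(\alpha^2+\beta^2)\langle x_i,y\rangle^2\|y\|^2+\alpha^2\beta^2\|y\|^4,
\]
which agrees with $F_i$ on the sphere and in particular still satisfies $\widetilde F_i(x_j)=c\,\delta_{ij}$. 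Each $\widetilde F_i$ therefore lies in the space $\mathrm{Hom}(4,d)$ of homogeneous polynomials of degree $4$ in $d$ variables, whose dimension is $\binom{d+3}{4}$. Finally, the delta property forces linear independence: if $\sum_i\lambda_i\widetilde F_i\equiv0$, evaluating at $x_j$ yields $\lambda_j c=0$, so every $\lambda_j=0$; hence $\{\widetilde F_i\}_{i=1}^{n}$ is a linearly independent subset of $\mathrm{Hom}(4,d)$, giving $|\mathcal{X}|=n\le\dim\mathrm{Hom}(4,d)=\binom{d+3}{4}$.

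I do not anticipate a genuine obstacle here; the argument is robust, which is why I expect only a few points to need care. These are (i) confirming $c\neq0$, which relies on $\alpha^2,\beta^2<1$, a consequence of the line system having no antipodal or coincident members; (ii) the homogenization step, which is where the exponent $4=2m$ — and hence the degree of $\mathrm{Hom}$ and the resulting dimension $\binom{d+3}{4}$ — actually enters; and (iii) the degenerate case where only one squared inner product occurs (e.g.\ $\alpha=\beta$, or $\alpha=0$), in which the same polynomial retains the delta property, so the bound only improves. I note that the identical scheme with degree-$2m$ annihilators recovers the general absolute bound $\binom{d+2m-1}{2m}$ for $m$-angular lines, specializing to $\binom{d+3}{4}$ when $m=2$.
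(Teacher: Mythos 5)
Your proof is correct, and it coincides with the paper's treatment in the only sense available: the paper states Theorem~\ref{T4} without proof, citing \cite{DGS}, and your argument is precisely the classical polynomial-method proof from that source — degree-$4$ annihilator polynomials with the delta property, homogenized on the sphere, giving a linearly independent family in the $\binom{d+3}{4}$-dimensional space of quartic forms. The side points you flag (nonvanishing of $c=(1-\alpha^2)(1-\beta^2)$, the degenerate single-angle case, and the general $m$-angular bound $\binom{d+2m-1}{2m}$) are all handled correctly.
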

We say that the permutation $\sigma$ of the set $\Gamma=\{\alpha,\beta,-\alpha,-\beta\}$ is a relabeling if $\sigma(\gamma)=-\sigma(-\gamma)$ for every $\gamma\in \Gamma$. The following concept is central to this paper.
\begin{definition}\label{MAINDEF}
Let $C(\alpha,\beta)$ be an $n\times n$ symmetric matrix with constant diagonal $1$ over the polynomial ring $\mathbb{Q}[\alpha,\beta]$ whose off-diagonal entries are $\{0,\pm\alpha,\pm\beta\}$. Two such matrices, $C_1$ and $C_2$ are called equivalent, if $C_1(\alpha,\beta)=PC_2(\sigma(\alpha),\sigma(\beta))P^T$ for some signed permutation matrix $P$ and relabeling $\sigma$. A representative of this matrix equivalence class is called a candidate Gram matrix.\qed
\end{definition}
Candidate Gram matrices capture the combinatorial structure of Gram matrices. Since our focus is on the biangular case, we will assume in the following that
\begin{equation}\label{ASSUME}
\alpha\beta(\alpha^2-\beta^2)(\alpha^2-1)(\beta^2-1)\neq 0.
\end{equation}
Furthermore, at most two out of the three symbols $0$, $\pm\alpha$, $\pm\beta$ can appear as a matrix entry in $C(\alpha,\beta)$. Clearly, if $G$ is a Gram matrix of a biangular line system, then there exist a candidate Gram matrix $C(\alpha,\beta)$, such that $G=C(\alpha^\ast,\beta^\ast)$ for some $\alpha^\ast,\beta^\ast\in\mathbb{R}$, subject to \eqref{ASSUME}. In particular, $\myrank C(\alpha^\ast,\beta^\ast)\leq d$ should hold.
\begin{example}[The candidate Gram matrices of order $3$]\label{EX1}
\[\left\{\left[\begin{smallmatrix}
1 & 0 & 0\\
0 & 1 & 0\\
0 & 0 & 1\\
\end{smallmatrix}\right], \qquad \left[\begin{smallmatrix}
1 & 0 & 0\\
0 & 1 & \alpha\\
0 & \alpha & 1\\
\end{smallmatrix}\right], \qquad \left[\begin{smallmatrix}
1 & 0 & \alpha\\
0 & 1 & \alpha\\
\alpha & \alpha & 1\\
\end{smallmatrix}\right], \qquad \left[\begin{smallmatrix}
1 & \alpha & \alpha\\
\alpha & 1 & \alpha\\
\alpha & \alpha & 1\\
\end{smallmatrix}\right], \qquad \left[\begin{smallmatrix}
1 & \alpha & \alpha\\
\alpha & 1 & \beta\\
\alpha & \beta & 1\\
\end{smallmatrix}\right]\right\}\]
Note that at most two symbols appear (whose values are unspecified) within the off-diagonal positions, signifying distinct inner products.\qed
\end{example}
The main advantage of using candidate Gram matrices is that in this way we are transforming the problem of `infinitely many $n\times n$ Gram matrices' to the conceptually simpler `finite list of $n\times n$ candidate Gram matrices' (where $n$ itself is bounded by Theorem~\ref{T4}). Then, one should decide whether a candidate Gram matrix actually represents a Gram matrix via a spectral analysis, as illustrated below.
\begin{example}[The Petersen graph and related structures, cf.~Proposition~\ref{PINF}]\label{EXPET}
Consider the following example of a candidate Gram matrix of order $10$:
\begin{equation*}
C(\alpha,\beta)=\left[
\begin{smallmatrix}
1 & \alpha & \alpha & \alpha & \alpha & \alpha & \alpha & \beta & \beta & \beta \\
\alpha & 1 & \alpha & \alpha & \alpha & \beta & \beta & \alpha & \alpha & \beta \\
\alpha & \alpha & 1 & \alpha & \beta & \alpha & \beta & \alpha & \beta & \alpha \\
\alpha & \alpha & \alpha & 1 & \beta & \beta & \alpha & \beta & \alpha & \alpha \\
\alpha & \alpha & \beta & \beta & 1 & \alpha & \alpha & \alpha & \alpha & \beta \\
\alpha & \beta & \alpha & \beta & \alpha & 1 & \alpha & \alpha & \beta & \alpha \\
\alpha & \beta & \beta & \alpha & \alpha & \alpha & 1 & \beta & \alpha & \alpha \\
\beta & \alpha & \alpha & \beta & \alpha & \alpha & \beta & 1 & \alpha & \alpha \\
\beta & \alpha & \beta & \alpha & \alpha & \beta & \alpha & \alpha & 1 & \alpha \\
\beta & \beta & \alpha & \alpha & \beta & \alpha & \alpha & \alpha & \alpha & 1 \\
\end{smallmatrix}
\right].
\end{equation*}
Here $C(0,1)-I_{10}$ is the adjacency matrix of the Petersen graph. Using standard spectral graph theory, one may find that for every $\alpha^\ast, \beta^\ast\in\mathbb{R}$ we have $\Lambda(C(\alpha^\ast,\beta^\ast))=\{[1+6\alpha^\ast+3\beta^\ast]^1,[1+\alpha^\ast-2\beta^\ast]^4,[1-2\alpha^\ast+\beta^\ast]^5\}$. Therefore $\myrank C(\alpha^\ast,2\alpha^\ast-1)\leq 5$. Furthermore, for $\alpha^\ast\geq 1/6, \alpha^\ast<1$ the matrix $C(\alpha^\ast,2\alpha^\ast-1)$ is positive semidefinite. The matrix $C(1/6,-2/3)$ on the boundary describes the Petersen code \cite{BACHOC}, which corresponds to the midpoints of the regular simplex in $\mathbb{R}^4$. \qed
\end{example}
However, computing the spectrum of a candidate Gram matrix without any apparent structure is a delicate task, and instead we will rely on the following key technical result.
\begin{proposition}[Strong Gr\"obner test, cf.~Corollary~\ref{WGB}]\label{PKEY}
Let $d\geq 2$ be fixed, and let $C(\alpha,\beta)$ be a candidate Gram matrix of order $n\geq d+1$. Let $\mathcal{M}$ denote the set of all $(d+1)\times(d+1)$ submatrices of $C$. Let $\omega$ be an auxiliary variable. If the following system of polynomial equations
\begin{equation}\label{KEYEQ}
\begin{cases}
\det M(\alpha,\beta)=0,\qquad \text{for all $M\in\mathcal{M}$}\\
\omega\alpha\beta(\alpha^2-\beta^2)(\alpha^2-1)(\beta^2-1)+1=0
\end{cases}
\end{equation}
has no solutions in $\mathbb{C}^3$, then $\myrank C(\alpha^\ast,\beta^\ast)\leq d$ cannot hold for any $\alpha^\ast,\beta^\ast\in\mathbb{R}$ subject to \eqref{ASSUME}.
\end{proposition}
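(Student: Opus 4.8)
The plan is to argue by contraposition, reducing the rank condition to the simultaneous vanishing of a finite family of polynomials and then exploiting the auxiliary variable $\omega$ to encode the open condition \eqref{ASSUME}. Thus I would assume that the desired conclusion fails, i.e.\ that there exist real numbers $\alpha^\ast,\beta^\ast$ satisfying \eqref{ASSUME} for which $\myrank C(\alpha^\ast,\beta^\ast)\leq d$, and from this produce a point of $\mathbb{C}^3$ solving \eqref{KEYEQ}, contradicting the hypothesis.

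The central step is the classical characterization of matrix rank: a matrix has rank at most $d$ precisely when each of its $(d+1)\times(d+1)$ minors vanishes. Because $\mathcal{M}$ is by definition the collection of all $(d+1)\times(d+1)$ submatrices of $C$ (this is exactly why the order $n\geq d+1$ is assumed), the hypothesis $\myrank C(\alpha^\ast,\beta^\ast)\leq d$ is equivalent to $\det M(\alpha^\ast,\beta^\ast)=0$ for every $M\in\mathcal{M}$. Here I would note that, since the entries of $C$ lie in $\mathbb{Q}[\alpha,\beta]$, each $\det M$ is a genuine polynomial, and the rank of the real symmetric matrix $C(\alpha^\ast,\beta^\ast)$ equals its rank over $\mathbb{C}$, so no ambiguity arises in passing between the two fields.

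The second step disposes of the inequation \eqref{ASSUME} by the Rabinowitsch trick. Setting $q:=\alpha^\ast\beta^\ast((\alpha^\ast)^2-(\beta^\ast)^2)((\alpha^\ast)^2-1)((\beta^\ast)^2-1)$, condition \eqref{ASSUME} says exactly that $q\neq 0$, so the scalar $\omega^\ast:=-1/q$ is well defined and satisfies the last equation of \eqref{KEYEQ}. Combining the two steps, the triple $(\alpha^\ast,\beta^\ast,\omega^\ast)\in\mathbb{R}^3\subseteq\mathbb{C}^3$ is a common zero of the entire system \eqref{KEYEQ}, contradicting the assumed absence of solutions over $\mathbb{C}$; this contradiction proves the proposition.

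As the argument is elementary, I do not expect a genuine obstacle; the only points requiring care are, first, confirming that testing minors of size exactly $d+1$ already certifies $\myrank\leq d$ (a larger nonvanishing minor would force a nonvanishing $(d+1)$-minor via Laplace expansion), and second, observing that only the easy direction of the $\omega$-encoding is needed, namely that a real witness yields a complex solution. It is worth stressing that the test is one-directional: it certifies nonexistence of low-rank specializations but says nothing on its own about realizability. The practical force of the statement---how one actually verifies that \eqref{KEYEQ} has no solution over $\mathbb{C}$---rests on the weak Nullstellensatz, implemented as a Gr\"obner basis computation yielding the unit ideal, as recorded in Corollary~\ref{WGB}.
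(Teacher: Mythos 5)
Your proposal is correct and follows essentially the same argument as the paper: contraposition via the vanishing of all $(d+1)\times(d+1)$ minors when $\myrank C(\alpha^\ast,\beta^\ast)\leq d$, followed by the Rabinowitsch-style choice $\omega^\ast=-1/q$ to satisfy the auxiliary equation, producing a solution of \eqref{KEYEQ} in $\mathbb{C}^3$. Your additional remarks (real versus complex rank, the one-directional nature of the test) are sound elaborations of points the paper leaves implicit.
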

\begin{proof}
Indeed, if $\myrank C(\alpha^\ast,\beta^\ast)\leq d$ for some $\alpha^\ast,\beta^\ast\in\mathbb{C}$ subject to \eqref{ASSUME}, then necessarily all $(d+1)\times(d+1)$ minors of $C(\alpha^\ast,\beta^\ast)$ are vanishing. In particular, there exists an $\omega^\ast\in\mathbb{C}$, so that $(\alpha^\ast,\beta^\ast,\omega^\ast)\in\mathbb{C}^3$ is a solution of the system of equations \eqref{KEYEQ}.
\end{proof}
We remark that deciding whether a system of polynomial equations with rational coefficients has any complex solutions can be decided by computing a Gr\"obner basis \cite{BECK}.

Based on these concepts, we now may classify biangular line systems in the following way. First, we fix $d\geq 2$, and $n=\binom{d+3}{4}$. Secondly, we generate (by computers, say) all $n\times n$ candidate Gram matrices. Thirdly, for each candidate Gram matrix $C(\alpha,\beta)$ generated, we attempt to determine, via solving the system of equations \eqref{KEYEQ} the (not necessarily finite) set of all real matrices $\{C(\alpha_i^\ast,\beta_i^\ast)\colon \myrank C(\alpha_i^\ast,\beta_i^\ast)\leq d; i\in\mathcal{I}\}$. Finally, we keep only those which are positive semidefinite. When no such matrices are found, then we decrease $n$ by one and repeat the same procedure.

There are several weak points of this naive method restricting heavily its utility. First of all, the bound on $n$, stipulated by Theorem~\ref{T4} is rather crude, and there is no way to generate all candidate Gram matrices of that size. Secondly, when the solution set of \eqref{KEYEQ} is infinite, then it is a very delicate task to parametrize the matrices $C(\alpha_i^\ast,\beta_i^\ast)$, $i\in\mathcal{I}$, and to describe which of these are positive semidefinite.

We overcome these difficulties by sophisticated matrix generation techniques, and using Proposition~\ref{PKEY} for discarding a large fraction of small candidate Gram matrices. We discuss these efforts in the next subsection.
\subsection{The framework in detail}
In this subsection we describe in more detail how to generate candidate Gram matrices in an equivalence-free exhaustive manner. The main technical tool is canonization, see \cite[Section~4.2.2]{KO}, \cite{REA}. The vectorization of a candidate Gram matrix $C$ of order $n$ is the vector $\myvec(C):=[C_{2,1},C_{3,1},C_{3,2},\dots,C_{n,1},\dots$, $C_{n,n-1}]$. We say that a candidate Gram matrix $C(\alpha,\beta)$ is in canonical form, if it holds that
\begin{multline}\label{CANFORM}
\myvec(C(\alpha,\beta)):=\min\{\myvec(PC(\sigma(\alpha),\sigma(\beta))P^T)\colon \text{$P$ is a signed}\\
\text{permutation matrix, $\sigma$ is a relabeling}\},
\end{multline}
where comparison of vectors is done lexicographically (one may assume, e.g., that the entries are ordered as $0\prec \alpha\prec-\alpha\prec \beta\prec -\beta)$. One particularly attractive feature of the above canonical form is that the leading principal submatrices of canonical matrices are themselves canonical. Therefore canonical matrices can be generated inductively, using smaller canonical matrices as `seeds'. This method is usually called `orderly generation'.
\begin{lemma}
The number of $n\times n$ canonical candidate Gram matrices with entries $\{0,\pm\alpha,\pm\beta\}$ \textup{(}in which all three symbols do not appear simultaneously\textup{)} is given in Table~\ref{TAB1} for $n\in\{1,\dots,8\}$.
\end{lemma}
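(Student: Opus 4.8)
The statement is a finite enumeration claim, so the plan is to prove it by exhibiting an exhaustive, equivalence-free generation procedure and reporting its output in Table~\ref{TAB1}. The natural tool is \emph{orderly generation} (canonical augmentation). I would let the group $\Gamma_n$ generated by the signed permutation matrices $P$ and the relabelings $\sigma$ of Definition~\ref{MAINDEF} act on the order-$n$ candidate Gram matrices via $C\mapsto PC(\sigma(\alpha),\sigma(\beta))P^T$, declare $C$ \emph{canonical} precisely when it attains the lexicographic minimum \eqref{CANFORM} of its $\Gamma_n$-orbit, and count the canonical representatives. Throughout I would enforce the biangular symbol constraint, namely that at most two of the three classes $0$, $\{\pm\alpha\}$, $\{\pm\beta\}$ occur off the diagonal; this constraint is clearly preserved by $\Gamma_n$ and inherited by every principal submatrix, so it is compatible with the inductive scheme below.

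The key structural input is the fact, recorded immediately before the statement, that every leading principal submatrix of a canonical matrix is again canonical. This licenses the following induction on $n$. Start from the unique order-$1$ matrix $[1]$. Given the complete list of canonical matrices of order $n-1$, obtain the canonical matrices of order $n$ by (i) taking each canonical \emph{seed} $C$ of order $n-1$; (ii) forming, in all admissible ways, the extensions of $C$ by a new last row and (symmetric) column whose entries lie in $\{0,\alpha,-\alpha,\beta,-\beta\}$ and respect the symbol constraint; and (iii) retaining exactly those extensions that are themselves in canonical form. Completeness follows from the hereditary property: any canonical matrix of order $n$ has a canonical $(n-1)\times(n-1)$ leading submatrix, hence arises as an extension of some seed on the list. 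Irredundancy is automatic, since each $\Gamma_n$-class has a unique canonical representative and step (iii) outputs only that representative.

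The canonicity test in step (iii) is the computational heart of the argument: to decide whether a matrix equals the lexicographic minimum of its orbit one must, in effect, search a group whose order grows like $2^{n}\,n!\cdot 8$, which is already far too large at $n=8$ for naive orbit enumeration. The plan is therefore to realise the minimisation of \eqref{CANFORM} by partition refinement with automorphism pruning, in the spirit of the canonization machinery of \cite{KO} and \cite{REA}: one treats $0$ and the four signed symbols as colours, refines an ordered colour partition of the index set, and uses the automorphisms discovered during the search (together with those of the seed) to prune large parts of the tree. I expect the main obstacle to be entirely one of scale and of implementation correctness rather than of theory — keeping the number of intermediate seeds manageable and certifying that the canonizer is free of errors. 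As a sanity check, the procedure must reproduce by hand-verifiable means the small cases, e.g.\ the single matrix of order $1$, the two matrices of order $2$, and the five canonical matrices of order $3$ displayed in Example~\ref{EX1}; agreement there, together with the deterministic nature of the generation, yields the entries of Table~\ref{TAB1} for $n\in\{1,\dots,8\}$.
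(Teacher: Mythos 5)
Your proposal is correct and follows essentially the same route as the paper: the paper defines canonicity via the lexicographic minimum \eqref{CANFORM}, invokes the hereditary property of leading principal submatrices to justify orderly generation from smaller seeds, verifies the cases $n\in\{1,2,3\}$ by hand (the latter via Example~\ref{EX1}), and attributes the remaining entries of Table~\ref{TAB1} to computation, exactly as you do. The only cosmetic difference is that you commit to a specific canonization implementation (partition refinement with automorphism pruning), whereas the paper remains agnostic and additionally cross-checks with a graph-canonization approach via nauty.
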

\begin{table}[htbp]
\caption{The number of candidate Gram matrices up to equivalence}\label{TAB1}
	\begin{tabular}{cccccccccc}
	\hline
	$n$   & 1 & 2 & 3 &  4 &   5 &    6 &       7 & 8\\
	$\#$  & 1 & 2 & 5 & 25 & 194 & 7958 & 1818859 & 1773789830\\
	\hline
	\end{tabular}
\end{table}
\begin{proof}
Case $n=1$ is $\left[\begin{array}{c} 1 \end{array}\right]$, case $n=2$ are $\left[\begin{smallmatrix}1 & 0\\ 0 & 1\end{smallmatrix}\right]$, and $\left[\begin{smallmatrix}1 & \alpha\\ \alpha & 1\end{smallmatrix}\right]$. Case $n=3$ is shown in Example~\ref{EX1}. The remaining cases follow by computation.
\end{proof}
As seen from Table~\ref{TAB1} the number of $n\times n$ candidate Gram matrices grows very rapidly. However, when $d\geq2$ is fixed and $n=d+2$, then we may filter out a very large fraction of candidate Gram matrices with the aid of Proposition~\ref{PKEY}. Indeed, for a given candidate Gram matrix we can check whether \eqref{KEYEQ} has any complex solutions by computing a degree reverse lexicographic reduced Gr\"obner basis \cite{BECK}, and keep only those candidate Gram matrices in a set $\mathcal{C}_d(n)$ for which some solutions are found. We performed this step with the aid of the C++ library `CoCoA' \cite{cocoa}.

We proceed by augmenting each candidate Gram matrix $C\in\mathcal{C}_d(n)$ with a new row (and column) whose prefix $[C_{n+1,1},C_{n+1,2},\dots,C_{n+1,n-1}]$ is lexicographically larger than the respective prefix of the last row of $C$ (cf.~\eqref{CANFORM}), keeping only those canonical matrices which in addition survive the next computationally cheap test.
\begin{lemma}[Combinatorial test]\label{COMBTEST}
Let $d\geq 2$ be fixed, and let $\mathcal{C}_d(n)$ be a set containing all pairwise inequivalent candidate Gram matrices of order $n$ for which the system of equations \eqref{KEYEQ} has a solution. Let $C$ be a candidate Gram matrix of order $n+1$. Then if $C$ corresponds to a Gram matrix in $\mathbb{R}^d$, then necessarily all its $n+1$ principal submatrices of order $n$ belong to the set $\mathcal{C}_d(n)$, up to equivalence.
\end{lemma}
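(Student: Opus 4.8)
The plan is to reduce the statement to Proposition~\ref{PKEY} applied to each order-$n$ principal submatrix, exploiting the fact that positive semidefiniteness and rank bounds are inherited when passing to principal submatrices. So suppose $C$ is a candidate Gram matrix of order $n+1$ that corresponds to a Gram matrix in $\mathbb{R}^d$; that is, there exist real $\alpha^\ast,\beta^\ast$ subject to \eqref{ASSUME} such that $G:=C(\alpha^\ast,\beta^\ast)$ is positive semidefinite with $\myrank G\leq d$. I would fix one of the $n+1$ principal submatrices of order $n$, say the one $C'$ obtained by deleting the $i$th row and column of $C$; its evaluation $G':=C'(\alpha^\ast,\beta^\ast)$ is precisely the corresponding principal submatrix of $G$.

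First I would check that $C'$ is itself a candidate Gram matrix: it is symmetric with unit diagonal, its off-diagonal entries lie in $\{0,\pm\alpha,\pm\beta\}$, and since its set of occurring symbols is contained in that of $C$, again at most two of $0,\pm\alpha,\pm\beta$ appear. Next, because $G'$ is a principal submatrix of the positive semidefinite $G$ it is itself positive semidefinite, and deleting a row together with its column cannot raise the rank, so $\myrank G'\leq\myrank G\leq d$. Thus $C'$ corresponds to a Gram matrix in $\mathbb{R}^d$, realised at the same parameters $(\alpha^\ast,\beta^\ast)$.

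Then I would reuse the argument behind Proposition~\ref{PKEY}. Since $\myrank C'(\alpha^\ast,\beta^\ast)\leq d$, every $(d+1)\times(d+1)$ minor of $C'(\alpha^\ast,\beta^\ast)$ vanishes; and since $(\alpha^\ast,\beta^\ast)$ satisfies \eqref{ASSUME}, the quantity $\alpha^\ast\beta^\ast((\alpha^\ast)^2-(\beta^\ast)^2)((\alpha^\ast)^2-1)((\beta^\ast)^2-1)$ is nonzero, so setting $\omega^\ast$ to its negative reciprocal yields a point $(\alpha^\ast,\beta^\ast,\omega^\ast)\in\mathbb{C}^3$ solving the system \eqref{KEYEQ} associated with $C'$. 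Hence \eqref{KEYEQ} is solvable for $C'$.

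Finally I would observe that solvability of \eqref{KEYEQ} is an invariant of the equivalence relation of Definition~\ref{MAINDEF}: a signed permutation conjugation $P(\,\cdot\,)P^T$ together with a relabeling $\sigma$ preserves both the rank of every real evaluation and the nonvanishing condition \eqref{ASSUME}, and therefore carries any solution of \eqref{KEYEQ} to a solution of the relabeled system. Consequently the equivalence class of $C'$ is among those for which \eqref{KEYEQ} has a solution, and by the defining property of $\mathcal{C}_d(n)$ it has a representative in that set. As $i$ was arbitrary, all $n+1$ principal submatrices of order $n$ lie in $\mathcal{C}_d(n)$ up to equivalence. The only genuinely delicate point is this last equivalence-invariance claim; everything else is immediate from the heredity of positive semidefiniteness and rank under principal submatrices together with the mechanism of Proposition~\ref{PKEY}.
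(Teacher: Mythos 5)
Your proposal is correct and follows essentially the same route as the paper's own (very terse) proof: real parameters $\alpha^\ast,\beta^\ast$ satisfying \eqref{ASSUME} with $\myrank C(\alpha^\ast,\beta^\ast)\leq d$ exist by hypothesis, rank cannot increase when passing to principal submatrices, and hence each order-$n$ principal submatrix yields a solution of \eqref{KEYEQ} and so lies in $\mathcal{C}_d(n)$ up to equivalence. The extra details you supply—the explicit choice of $\omega^\ast$, the check that submatrices remain candidate Gram matrices, and the equivalence-invariance of the solvability of \eqref{KEYEQ}—are exactly the steps the paper leaves implicit (the positive-semidefiniteness observation is harmless but not needed), so this is a faithful, slightly more complete rendering of the same argument.
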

\begin{proof}
Indeed, if $C$ corresponds to some Gram matrix, then there exist real numbers $\alpha^\ast$, $\beta^\ast$ (subject to \eqref{ASSUME}) such that $\myrank C(\alpha^\ast,\beta^\ast)\leq d$. Since the rank of submatrices cannot increase, this must be true for every principal submatrices of $C(\alpha^\ast,\beta^\ast)$. But then these submatrices must be in the set $\mathcal{C}_d(n)$, up to equivalence.
\end{proof}
Since the $n\times n$ principal submatrices of a candidate Gram matrix of order $n+1$ must be compatible, we test them further with the following.
\begin{corollary}[Weak Gr\"obner test, cf.~Proposition~\ref{PKEY}]\label{WGB}
Let $d\geq 2$ be fixed, and let $C(\alpha,\beta)$ be a candidate Gram matrix of order $n\geq d+1$. Let $\mathcal{M}$ denote the set of all $(d+1)\times(d+1)$ principal submatrices of $C$. Let $\omega$ be an auxiliary variable. If the following system of polynomial equations
\begin{equation*}
\begin{cases}
\det M(\alpha,\beta)=0,\qquad \text{for all $M\in\mathcal{M}$}\\
\omega\alpha\beta(\alpha^2-\beta^2)(\alpha^2-1)(\beta^2-1)+1=0
\end{cases}
\end{equation*}
has no solutions in $\mathbb{C}^3$, then $\myrank C(\alpha^\ast,\beta^\ast)\leq d$ cannot hold for any $\alpha^\ast,\beta^\ast\in\mathbb{R}$ subject to \eqref{ASSUME}.
\end{corollary}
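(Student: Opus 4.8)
The plan is to obtain this as an immediate consequence of Proposition~\ref{PKEY}, since the two statements differ only in the choice of the matrix family $\mathcal{M}$: in Proposition~\ref{PKEY} it consists of \emph{all} $(d+1)\times(d+1)$ submatrices of $C$, whereas here it consists only of the \emph{principal} ones. As the principal submatrices form a subfamily of all submatrices, the system of equations appearing in the corollary is simply a subsystem of the system~\eqref{KEYEQ}, obtained by deleting the determinant equations attached to the non-principal submatrices; the auxiliary equation involving $\omega$ is identical in both cases.

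First I would record the elementary fact that discarding polynomial equations can only enlarge the solution set: writing $V_{\mathrm{w}}\subseteq\mathbb{C}^3$ for the zero set of the corollary's (weak) system and $V_{\mathrm{s}}\subseteq\mathbb{C}^3$ for the zero set of~\eqref{KEYEQ}, we have $V_{\mathrm{s}}\subseteq V_{\mathrm{w}}$. Consequently, if the weak system has no solutions, that is $V_{\mathrm{w}}=\emptyset$, then a fortiori $V_{\mathrm{s}}=\emptyset$.

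With the hypothesis of Proposition~\ref{PKEY} thus verified, I would invoke it directly: the emptiness of $V_{\mathrm{s}}$ forces $\myrank C(\alpha^\ast,\beta^\ast)\leq d$ to fail for every $\alpha^\ast,\beta^\ast\in\mathbb{R}$ subject to~\eqref{ASSUME}, which is exactly the assertion of the corollary. There is no genuine obstacle here, as the result is a strict logical weakening of the strong test, trading away some discriminating power (fewer candidate Gram matrices get discarded) for cheaper computation (only principal minors need be formed). The only point demanding care is the direction of the containment $V_{\mathrm{s}}\subseteq V_{\mathrm{w}}$: it is precisely because the weak test imposes \emph{fewer} constraints that its nonexistence of solutions is the stronger, and hence usable, hypothesis.
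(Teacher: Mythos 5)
Your proposal is correct and matches the paper's treatment: the paper's proof of Corollary~\ref{WGB} is the one-line remark that it is a variant of Proposition~\ref{PKEY}, and your reduction---observing that the principal-minor system is a subsystem of \eqref{KEYEQ}, so emptiness of its solution set $V_{\mathrm{w}}$ forces emptiness of $V_{\mathrm{s}}$, whereupon Proposition~\ref{PKEY} applies---is precisely the intended argument made explicit. You also correctly identify the one point needing care, namely that fewer equations enlarge the solution set, so the weak test's hypothesis is the stronger one and the implication runs in the usable direction.
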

\begin{proof}
This is a variant of Proposition~\ref{PKEY}.
\end{proof}
Finally, we store all surviving matrices in a set $\mathcal{C}_d(n+1)$, and repeat this procedure as long as new matrices are discovered (but until $n$ achieves the Absolute bound in Theorem~\ref{T4}). Once the largest candidate Gram matrices are found, we use Proposition~\ref{PKEY} to determine explicitly the matrices with rank at most $d$, and then by computing their characteristic polynomial (or eigenvalues, if it is possible) we determine the positive semidefinite matrices. We remark that the set of inner products of the maximum Gram matrices is a by-product of this procedure.

We summarize our approach in the following `roadmap' which we will frequently use as a convenient reference.
\begin{roadmap}\label{ROADMAP}
The following is our approach for generating and classifying biangular lines in $\mathbb{R}^d$.
\begin{itemize}[leftmargin=*]
\item Fix the dimension $d\geq 2$.
\item Generate all $\{0,\pm\alpha,\pm\beta\}$ canonical candidate Gram matrices \textup{(}with at most two symbols\textup{)} of size $d+1$, and store them in a set $\mathcal{C}_d(d+1)$.
\item Augment every $C\in\mathcal{C}_d(d+1)$ with a new row and column in every possible way, and then test the canonical matrices by Proposition~\ref{PKEY}. Store the surviving matrices of size $d+2$ in a set $\mathcal{C}_d(d+2)$.
\item For every $i\in\{d+2,\dots,\binom{d+3}{4}\}$ augment every $C\in\mathcal{C}_d(i)$ with a new row and column in every possible way, and then test the canonical matrices by Lemma~\ref{COMBTEST} and Corollary~\ref{WGB}. Store the surviving matrices of size $i+1$ in a set $\mathcal{C}_d(i+1)$, and repeat this step.
\item For the largest candidate Gram matrices use Proposition~\ref{PKEY} and in particular the solutions of the system of equations \eqref{KEYEQ} to determine the real matrices of rank at most $d$.
\item Select from these the positive semidefinite matrices.
\end{itemize}
\end{roadmap}
\begin{remark}
We observed that once the size $n$ of candidate Gram matrices is large enough, say $n\geq d+5$, then essentially all matrices survive Corollary~\ref{WGB}. In these cases we solely rely on Lemma~\ref{COMBTEST} for pruning. We believe that the reason for this phenomenon is related to the fact that the congruence order of $\mathbb{R}^d$ is $d+3$, see \cite[Theorem~7.2]{L}.
\end{remark}
\begin{remark}
Let $d\geq 3$, $n\geq d+1$, $\alpha^\ast,\beta^\ast\in\mathbb{R}$ fixed, and let $C(\alpha^\ast,\beta^\ast)$ be an $n\times n$ Gram matrix with $\myrank C(\alpha^\ast,\beta^\ast)\leq d-2$. Then for every $v\in\mathbb{R}^n$, $\myrank \left[\begin{smallmatrix} C(\alpha^\ast,\beta^\ast) & v^T\\ v & 1\end{smallmatrix}\right]\leq d$ by subadditivity. In particular, the  tests described in Proposition~\ref{PKEY} and Corollary~\ref{WGB} have no effect.
\end{remark}
\begin{remark}
There are two major techniques for matrix canonization: the one relies on formula \eqref{CANFORM} which nicely fits into the framework of `orderly generation'. The other possibility is to transform the problem of matrix canonization to graph canonization for which there are readily available efficient implementations, such as the `nauty' software \cite{MCK}. In Appendix~\ref{APPENDIX1} we describe a graph representation of candidate Gram matrices, which can be used in the framework of `canonical augmentation'. These two techniques are of similar efficiency, and we have used both of them to cross-check our results. We refer the reader to \cite{BRINK} and the references therein.
\end{remark}
\section{Classification of maximum biangular lines}\label{sec4}
We implemented the framework developed in Section~\ref{sec3} in C++ and used a computer cluster with 500 CPU cores for several weeks to obtain the following new classification results in $\mathbb{R}^d$ for $d\leq 6$.

For completeness, we begin our discussion with the case $d=2$ by giving an independent, computational proof to Lemma~\ref{TENGONLEMMA}.
\begin{lemma}[Equivalent restatement of Lemma~\ref{TENGONLEMMA}]
The maximum cardinality of a biangular line system in $\mathbb{R}^2$ is $5$. The unique configuration has candidate Gram matrix
\begin{equation}\label{TENG}
C(\alpha,\beta)=\left[
\begin{smallmatrix}
1 & \alpha & \alpha & \beta & \beta \\
\alpha & 1 & \beta & \alpha & \beta \\
\alpha & \beta & 1 & \beta & \alpha \\
\beta & \alpha & \beta & 1 & \alpha \\
\beta & \beta & \alpha & \alpha & 1 \\
\end{smallmatrix}
\right]
\end{equation}
and Gram matrix $C((\sqrt{5}-1)/4,(-\sqrt{5}-1)/4)$, describing the main diagonals of the convex regular $10$-gon.
\end{lemma}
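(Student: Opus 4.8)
The plan is to run the classification scheme of Roadmap~\ref{ROADMAP} with $d=2$, which provides an independent and purely computational route to Lemma~\ref{TENGONLEMMA}. Since $\binom{d+3}{4}=\binom{5}{4}=5$, the Absolute bound (Theorem~\ref{T4}) already guarantees that any biangular line system in $\mathbb{R}^2$ has at most $5$ lines; the remaining task is to show that, up to equivalence, exactly one candidate Gram matrix of order $5$ yields a positive semidefinite matrix of rank at most $2$, and then to identify it.

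First I would seed the procedure with the five canonical candidate Gram matrices of order $3$ exhibited in Example~\ref{EX1}, setting $\mathcal{C}_2(3)$ to be this list. I then augment each of them by one row and column in every canonically admissible way and apply the strong Gr\"obner test (Proposition~\ref{PKEY}): for each resulting order-$4$ candidate $C(\alpha,\beta)$ I form all its $3\times 3$ submatrices, impose that their determinants vanish together with the nondegeneracy relation $\omega\alpha\beta(\alpha^2-\beta^2)(\alpha^2-1)(\beta^2-1)+1=0$ from \eqref{KEYEQ}, and compute a Gr\"obner basis to decide whether the system is solvable over $\mathbb{C}^3$. The survivors constitute $\mathcal{C}_2(4)$. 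Augmenting once more reaches the maximal order $5$, where I prune first with the combinatorial test (Lemma~\ref{COMBTEST}), requiring every order-$4$ principal submatrix to lie in $\mathcal{C}_2(4)$ up to equivalence, and then with the weak Gr\"obner test (Corollary~\ref{WGB}).

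For the order-$5$ survivors I would invoke Proposition~\ref{PKEY} in its quantitative form, solving \eqref{KEYEQ} explicitly to obtain the finite set of real pairs $(\alpha^\ast,\beta^\ast)$ subject to \eqref{ASSUME} for which $\myrank C(\alpha^\ast,\beta^\ast)\leq 2$, and retaining only the positive semidefinite matrices (checked via the characteristic polynomial). The anticipated outcome is that, up to the equivalence of Definition~\ref{MAINDEF}, precisely one candidate survives, namely the matrix \eqref{TENG}, with unique feasible real solution $\alpha=(\sqrt5-1)/4$ and $\beta=(-\sqrt5-1)/4$. Recognizing $\alpha=\cos(2\pi/5)$ and $\beta=-\cos(\pi/5)$ then identifies the configuration with the five main diagonals of the convex regular $10$-gon, completing the proof.

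The main obstacle I expect is not any single hard computation --- for $d=2$ the polynomial systems are tiny --- but rather certifying that the enumeration is genuinely exhaustive and equivalence-free: one must trust that the canonical-form machinery of \eqref{CANFORM} generates every order-$5$ candidate exactly once, and that the positive semidefinite filtering discards all spurious rank-$2$ realizations while keeping \eqref{TENG}. In other words, the delicate part is the bookkeeping of the orderly generation together with the verification that no \emph{second} inequivalent positive semidefinite realization survives, rather than the final root extraction.
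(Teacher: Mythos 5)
Your proposal follows essentially the same route as the paper's proof: Roadmap~\ref{ROADMAP} with $d=2$ (orderly generation of candidate Gram matrices, pruning via Proposition~\ref{PKEY}, Lemma~\ref{COMBTEST} and Corollary~\ref{WGB}, then solving \eqref{KEYEQ} for the largest survivors and filtering for positive semidefiniteness), with the paper recording the survivor counts in Table~\ref{TAB2D} and arriving at the single order-$5$ candidate \eqref{TENG}. The only cosmetic differences are that you cap the search at order $5$ via the Absolute bound $\binom{5}{4}=5$ where the paper instead observes $|\mathcal{C}_2(6)|=0$ from the computation, and that solving \eqref{KEYEQ} (namely $4\alpha^2+2\alpha-1=0$, $\beta=-\alpha-1/2$) actually yields \emph{two} permutation-equivalent positive semidefinite solutions rather than literally one, which of course still gives uniqueness up to the equivalence of Definition~\ref{MAINDEF}.
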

\begin{table}[htbp]%
	\centering
	\caption{$\{0,\pm \alpha,\pm \beta\}$ candidate Gram matrices in $\mathbb{R}^2$}\label{TAB2D}
	\begin{tabular}{c|ccccc}
		\toprule
		$n$                  & 2 & 3 & 4 & 5 & 6\\
	   	$|\mathcal{C}_2(n)|$ & 2 & 3 & 2 & 1 & 0\\
		\bottomrule
	\end{tabular}
\end{table}
\begin{proof}
The proof follows Roadmap~\ref{ROADMAP} with $d=2$. In Table~\ref{TAB2D} we display the number of surviving candidate Gram matrices, that is the numbers $|\mathcal{C}_2(n)|$ for $n\in\{2,\dots,6\}$. Since $|\mathcal{C}_2(6)|=0$, it follows that $|\mathcal{C}_2(n)|=0$ for every $n\geq 6$. The unique maximum candidate Gram matrix of size $5$ is shown in \eqref{TENG} from which the Gram matrices can be recovered by solving the system of equations \eqref{KEYEQ}. It follows that $4\alpha^2+2\alpha-1=0$, and $\beta=-\alpha-1/2$. This yields two permutation equivalent, positive semidefinite solutions: $C((\sqrt{5}-1)/4,(-\sqrt5-1)/4)$ and $C((-\sqrt{5}-1)/4,(\sqrt5-1)/4)$, both corresponding to the main diagonals of the convex regular $10$-gon.
\end{proof}
\begin{remark}
The four lines, passing through the antipodal vertices of the convex regular octagon form the second largest, inextendible configuration of biangular lines in $\mathbb{R}^2$ with set of inner products $\{0,\pm1/\sqrt{2}\}$.
\end{remark}
\begin{theorem}\label{TDOD}
The maximum cardinality of a biangular line system in $\mathbb{R}^3$ is $10$. The unique configuration has candidate Gram matrix
\begin{equation}\label{DOD}
C(\alpha,\beta)=\left[
\begin{smallmatrix}
1 & \alpha & \alpha & \alpha & \alpha & \alpha & \alpha & \beta & \beta & \beta \\
\alpha & 1 & \alpha & -\alpha & -\alpha & \beta & -\beta & \alpha & -\alpha & \beta \\
\alpha & \alpha & 1 & \beta & -\beta & -\alpha & -\alpha & -\alpha & \alpha & \beta \\
\alpha & -\alpha & \beta & 1 & -\alpha & -\beta & \alpha & -\alpha & \beta & \alpha \\
\alpha & -\alpha & -\beta & -\alpha & 1 & \alpha & \beta & \beta & \alpha & -\alpha \\
\alpha & \beta & -\alpha & -\beta & \alpha & 1 & -\alpha & \beta & -\alpha & \alpha \\
\alpha & -\beta & -\alpha & \alpha & \beta & -\alpha & 1 & \alpha & \beta & -\alpha \\
\beta & \alpha & -\alpha & -\alpha & \beta & \beta & \alpha & 1 & \alpha & \alpha \\
\beta & -\alpha & \alpha & \beta & \alpha & -\alpha & \beta & \alpha & 1 & \alpha \\
\beta & \beta & \beta & \alpha & -\alpha & \alpha & -\alpha & \alpha & \alpha & 1 \\
\end{smallmatrix}
\right]
\end{equation}
and Gram matrix $C(1/3,\sqrt5/3)$, corresponding to the main diagonals of the platonic dodecahedron.
\end{theorem}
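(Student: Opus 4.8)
The plan is to specialize Roadmap~\ref{ROADMAP} to the case $d=3$. The lower bound is immediate: the $10$ main diagonals of the regular dodecahedron span a biangular line system with inner product set $\{\pm1/3,\pm\sqrt5/3\}$ (cf.\ Table~\ref{TABREL}), so it suffices to prove that no biangular system in $\mathbb{R}^3$ has more than $10$ lines and that the order-$10$ configuration is unique. By Theorem~\ref{T4} the cardinality of any such system is at most $\binom{6}{4}=15$, so the inductive classification terminates after finitely many augmentation steps.

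First I would generate the set $\mathcal{C}_3(4)$ of all canonical $4\times4$ candidate Gram matrices (with at most two symbols off the diagonal). Then, proceeding inductively as in the Roadmap, I would build $\mathcal{C}_3(n+1)$ from $\mathcal{C}_3(n)$ by augmenting each surviving matrix with a new row and column in every admissible way, canonizing via \eqref{CANFORM}, and discarding the matrices that fail the combinatorial test (Lemma~\ref{COMBTEST}) or the Gr\"obner tests (Proposition~\ref{PKEY} for the first augmentation, Corollary~\ref{WGB} thereafter). The decisive computational outcome I expect is that the construction terminates with $|\mathcal{C}_3(11)|=0$. Since an empty level cannot be augmented, and since any hypothetical system of $m\geq 11$ lines would restrict, by the rank-monotonicity argument of Lemma~\ref{COMBTEST}, to an order-$11$ candidate Gram matrix belonging to $\mathcal{C}_3(11)$, this would establish the upper bound $|\mathcal{X}|\leq 10$, exactly as $|\mathcal{C}_2(6)|=0$ settled the planar case.

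For uniqueness I would examine the matrices in $\mathcal{C}_3(10)$. For each one I would solve the system \eqref{KEYEQ} over $\mathbb{C}$ to extract the finite set of real pairs $(\alpha^\ast,\beta^\ast)$, subject to \eqref{ASSUME}, for which $\myrank C(\alpha^\ast,\beta^\ast)\leq 3$, and then retain only the positive semidefinite specializations. I expect exactly one candidate Gram matrix, namely \eqref{DOD}, to survive, with the Gr\"obner computation forcing (up to relabeling and sign) the values $\alpha=1/3$ and $\beta=\sqrt5/3$; a direct spectral check then confirms that $C(1/3,\sqrt5/3)$ is positive semidefinite of rank $3$ and is congruent to the diagonals of the platonic dodecahedron.

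The main obstacle is computational rather than conceptual: the unrestricted count of candidate Gram matrices explodes (Table~\ref{TAB1}), so the entire argument hinges on the pruning tests being strong enough to keep the intermediate sets $\mathcal{C}_3(n)$ tractable as $n$ approaches $10$, and on carrying out the Gr\"obner basis computations over $\mathbb{Q}[\alpha,\beta,\omega]$ reliably. Two further points require care. The generation must be genuinely exhaustive and equivalence-free, which I would guarantee by orderly generation via the canonical form \eqref{CANFORM}, cross-checked against the graph-canonization approach of Appendix~\ref{APPENDIX1}. Moreover, at the final stage one must argue that solving \eqref{KEYEQ} recovers \emph{all} real rank-at-most-$3$ specializations rather than merely a subset, so that no positive semidefinite configuration of order $10$ is overlooked.
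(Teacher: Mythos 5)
Your proposal follows essentially the same route as the paper: it is exactly the computational classification of Roadmap~\ref{ROADMAP} specialized to $d=3$, with the upper bound coming from the empirical fact $|\mathcal{C}_3(11)|=0$ and uniqueness from solving \eqref{KEYEQ} for the single surviving candidate Gram matrix of order $10$, which forces $\alpha=1/3$ and $\beta^2=5/9$ (the paper notes the two resulting matrices $C(1/3,\pm\sqrt5/3)$ are permutation equivalent, matching your ``up to relabeling and sign'' remark). The paper's proof is precisely this argument together with the reported computational data in Table~\ref{TAB3D}.
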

\begin{table}[htbp]%
	\centering
	\caption{$\{0,\pm \alpha,\pm \beta\}$ candidate Gram matrices in $\mathbb{R}^3$}\label{TAB3D}
	\begin{tabular}{c|cccccccccc}
		\toprule
		$n$ & 2 & 3 &  4 &  5 &  6 & 7 & 8 & 9 & 10 & 11\\
		$|\mathcal{C}_3(n)|$ & 2 & 5 & 22 & 23 & 12 & 5 & 2 & 1 &  1 &  0\\
		\bottomrule
	\end{tabular}
\end{table}
\begin{proof}
The proof follows Roadmap~\ref{ROADMAP} with $d=3$. In Table~\ref{TAB3D} we display the number of surviving candidate Gram matrices, that is the numbers $|\mathcal{C}_3(n)|$ for $n\in\{2,\dots,11\}$. Since $|\mathcal{C}_3(11)|=0$, it follows that $|\mathcal{C}_3(n)|=0$ for every $n\geq 11$. The unique maximum candidate Gram matrix of size $10$ is shown in \eqref{DOD}. The equations \eqref{KEYEQ} imply that $\alpha=1/3$, and $\beta^2=5/9$. This yields two permutation equivalent, positive semidefinite solutions: $C(1/3,\sqrt{5}/3)$ and $C(1/3,-\sqrt5/3)$, both corresponding to the main diagonals of the platonic dodecahedron.
\end{proof}
\begin{remark}
The second largest (inextendible) examples in $\mathbb{R}^3$ can be obtained by lifting the convex regular $7$-gon by Proposition~\ref{P2} to two carefully chosen heights.
\end{remark}
\begin{theorem}\label{THMDIM4}
The maximum cardinality of a biangular line system in $\mathbb{R}^4$ is $12$. There are four pairwise nonisometric maximum configurations: the shortest vectors of the $D_4$ lattice; the shortest vectors of the $D_3$ lattice after lifting; and two spherical $3$-distance sets with common candidate Gram matrix
\begin{equation}\label{eq111}
C(\alpha,\beta)=\left[\begin{smallmatrix}
B(\alpha,\beta)+I_6 & B(\beta,\alpha)-\beta I_6\\
B(\beta,\alpha)-\beta I_6 & B(\alpha,\beta)+I_6\\
\end{smallmatrix}\right],\quad \text{ where }\quad
B(\alpha,\beta)=\left[\begin{smallmatrix}
0 & \alpha  & \alpha  & \alpha  & \alpha  & \alpha\\
\alpha & 0 & \alpha & \beta & \beta & \alpha\\
\alpha & \alpha & 0 & \alpha & \beta & \beta\\
\alpha & \beta & \alpha & 0 & \alpha  & \beta\\
\alpha & \beta & \beta & \alpha  & 0 & \alpha\\
\alpha & \alpha & \beta & \beta & \alpha & 0\\
\end{smallmatrix}\right],
\end{equation}
yielding nonisometric Gram matrices $C((3-2\sqrt{5})/11,(4+\sqrt5)/11)$ and $C((3+2\sqrt{5})/11,(4-\sqrt5)/11)$.
\end{theorem}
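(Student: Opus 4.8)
The plan is to run the classification procedure of Roadmap~\ref{ROADMAP} with $d=4$, exactly as was done for $d\in\{2,3\}$. First I would generate the set $\mathcal{C}_4(5)$ of all canonical $\{0,\pm\alpha,\pm\beta\}$ candidate Gram matrices of order $d+1=5$ (with at most two symbols appearing), retaining only those surviving the strong Gr\"obner test of Proposition~\ref{PKEY}. I would then proceed inductively by canonical augmentation: each $C\in\mathcal{C}_4(n)$ is extended by a new row and column in all admissible ways, the resulting order-$(n+1)$ matrices are canonized, and they are pruned by the combinatorial test (Lemma~\ref{COMBTEST}) together with the weak Gr\"obner test (Corollary~\ref{WGB}). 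Recording the cardinalities $|\mathcal{C}_4(n)|$ as $n$ increases, I expect to reach some $n_0$ with $|\mathcal{C}_4(n_0)|=0$; since the leading principal submatrices of a canonical matrix are themselves canonical, Lemma~\ref{COMBTEST} then forces $|\mathcal{C}_4(n)|=0$ for every $n\ge n_0$. Combined with the constructions of Section~\ref{sec2}, which already exhibit $12$ lines in $\mathbb{R}^4$, this pins down the maximum cardinality at $12$ once the enumeration confirms $|\mathcal{C}_4(12)|>0$ and $|\mathcal{C}_4(13)|=0$, in direct analogy with the $d=2,3$ cases.

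The second stage is to extract the actual Gram matrices. For each surviving candidate Gram matrix of order $12$ I would solve the polynomial system \eqref{KEYEQ} over $\mathbb{C}$ to obtain the finitely many pairs $(\alpha^\ast,\beta^\ast)$ for which the rank drops to at most $4$, discard the complex and infeasible solutions, and keep the positive semidefinite realizations by inspecting the characteristic polynomial. This should reproduce the $D_4$ configuration of Lemma~\ref{L1} (with $A\subseteq\{0,\pm1/2\}$) and the lifted $D_3$ configuration of Theorem~\ref{T1} (with $A\subseteq\{\pm1/5,\pm3/5\}$), while the matrix \eqref{eq111} should yield precisely the two irrational solutions $C((3\mp2\sqrt5)/11,(4\pm\sqrt5)/11)$.

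To finish I would certify that the four configurations are pairwise nonisometric. Three of them are separated immediately by their inner-product sets: $\{0,\pm1/2\}$ for $D_4$, $\{\pm1/5,\pm3/5\}$ for the lifted $D_3$, and an irrational set for the two spherical $3$-distance examples, where one notes that only three of the four values $\{\pm\alpha,\pm\beta\}$ actually occur in \eqref{eq111} (the symbol $-\alpha$ is absent), so these are genuinely $3$-distance sets. The remaining two share the candidate Gram matrix \eqref{eq111} but correspond to different feasible pairs $(\alpha^\ast,\beta^\ast)$; a short computation shows that the two resulting sets $\{|\alpha^\ast|,|\beta^\ast|\}$ of absolute angles differ, so no signed permutation together with a relabeling can identify them, and they are nonisometric.

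The main obstacle I anticipate is the cost and the certified correctness of the exhaustive generation itself. The number of candidate Gram matrices grows explosively (cf.~Table~\ref{TAB1}), so the whole scheme is feasible only because the Gr\"obner tests of Proposition~\ref{PKEY} and Corollary~\ref{WGB} eliminate the overwhelming majority of small matrices before the combinatorial explosion sets in; arranging the computation to terminate within available resources, and independently cross-checking the canonization (e.g.\ via the graph representation of Appendix~\ref{APPENDIX1} and the `nauty' software), is where the real effort lies. A secondary difficulty is that the solution set of \eqref{KEYEQ} need not be zero-dimensional for every candidate matrix, so one must be prepared to parametrize positive-dimensional components and isolate their positive semidefinite, rank-$4$ members; for the order-$12$ maxima, however, I expect this to reduce to the finite list above.
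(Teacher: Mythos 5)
Your proposal follows exactly the paper's own proof: Roadmap~\ref{ROADMAP} with $d=4$, orderly/canonical augmentation pruned by Proposition~\ref{PKEY}, Lemma~\ref{COMBTEST} and Corollary~\ref{WGB} until $|\mathcal{C}_4(13)|=0$, then solving \eqref{KEYEQ} for the three surviving order-$12$ candidates to recover the $D_4$, lifted $D_3$, and the two irrational solutions of \eqref{eq111}. The only addition is your explicit nonisometry check via the distinct sets of absolute inner products, which the paper leaves implicit; this is a correct and welcome elaboration, not a different method.
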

\begin{table}[htbp]%
	\centering
	\caption{$\{0,\pm \alpha,\pm \beta\}$ candidate Gram matrices in $\mathbb{R}^4$}\label{TAB4D}
	\begin{tabular}{c|cccccccccccc}
		\toprule
		$n$ & 2 & 3 &  4 &   5 &   6 &   7 &  8 &  9 & 10 & 11 & 12 & 13\\
		$|\mathcal{C}_4(n)|$ & 2 & 5 & 25 & 191 & 701 & 184 & 69 & 27 & 14 &  3 &  3 &  0\\  
		\bottomrule
	\end{tabular}
\end{table}
\begin{proof}
The proof follows Roadmap~\ref{ROADMAP} with $d=4$. In Table~\ref{TAB4D} we display the number of surviving candidate Gram matrices, that is the numbers $|\mathcal{C}_4(n)|$ for $n\in\{2,\dots,13\}$. Since $|\mathcal{C}_4(13)|=0$, it follows that $|\mathcal{C}_4(n)|=0$ for every $n\geq 13$. The candidate Gram matrices corresponding to the $D_4$ and the lifted $D_3$ lattice vectors are not shown here, as they can be easily recovered from Lemma~\ref{L1} and Proposition~\ref{P2}, and one may check by solving \eqref{KEYEQ} that these are the only solutions. Interestingly, the third candidate Gram matrix $C(\alpha,\beta)$ shown in \eqref{eq111} yields two nonisometric solutions, as the equations \eqref{KEYEQ} imply that $11\alpha^2-6\alpha-1=0$, and $\beta=\alpha/2-1/2$.
\end{proof}
We note that since the candidate Gram matrix \eqref{eq111} describes a spherical $3$-distance set, it has already been generated earlier in \cite{SZO}.
\begin{remark}
The Gram matrices obtained from \eqref{eq111} are contained in the Bose--Mesner algebra of a $3$-class association scheme \cite{hanaki}.
\end{remark}
\begin{theorem}\label{THMDIM5}
The maximum cardinality of a biangular line system in $\mathbb{R}^5$ is $24$. The unique configuration can be obtained by lifting the shortest vectors of the $D_4$ lattice.
\end{theorem}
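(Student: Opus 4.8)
The plan is to follow Roadmap~\ref{ROADMAP} with $d=5$, exactly as was done for the lower-dimensional cases in Theorems~\ref{TDOD} and \ref{THMDIM4}. First I would fix $d=5$ and generate, up to equivalence, all $\{0,\pm\alpha,\pm\beta\}$ canonical candidate Gram matrices of order $d+1=6$, storing them in the seed set $\mathcal{C}_5(6)$. These small matrices are exactly the canonical representatives counted earlier (of which there are $7958$ of order $6$, before any dimension-specific pruning), and the ones surviving the Strong Gr\"obner test of Proposition~\ref{PKEY} form $\mathcal{C}_5(6)$. I would then apply the inductive orderly-generation step: augment every matrix in $\mathcal{C}_5(n)$ by all admissible new rows and columns, canonize, discard non-canonical matrices, and prune using the Combinatorial test of Lemma~\ref{COMBTEST} together with the Weak Gr\"obner test of Corollary~\ref{WGB}. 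This produces $\mathcal{C}_5(n+1)$, and the step is repeated while new matrices appear.

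The key expected outcome is that the counts $|\mathcal{C}_5(n)|$ grow, peak, and then collapse, with $|\mathcal{C}_5(25)|>0$ but $|\mathcal{C}_5(26)|=0$. Exactly as in the earlier proofs, once $|\mathcal{C}_5(n)|=0$ for some $n$ it follows that $|\mathcal{C}_5(m)|=0$ for all $m\ge n$, since any larger Gram matrix would contain an $n\times n$ principal submatrix of rank at most $5$, contradicting emptiness of $\mathcal{C}_5(n)$ via Lemma~\ref{COMBTEST}. This establishes that the maximum cardinality is $24$. For the uniqueness claim I would examine the (presumably unique, up to equivalence) maximum candidate Gram matrix of order $24$, solve the associated system of equations \eqref{KEYEQ} to extract the admissible real values $\alpha^\ast,\beta^\ast$, and verify positive semidefiniteness by computing the characteristic polynomial (or eigenvalues). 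The by-product set of inner products should be $\{\pm1/5,\pm3/5\}$, matching the construction in Theorem~\ref{T1}. I would then confirm that the resulting configuration is isometric to the lifting of the $D_4$ shortest vectors via Proposition~\ref{P2}, which by Lemma~\ref{L1} yields $2\cdot 4\cdot 3=24$ biangular lines in $\mathbb{R}^5$ with $A\subseteq\{\pm1/5,\pm3/5\}$; this identifies the unique maximum explicitly.

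The main obstacle is computational rather than conceptual. At $d=5$ the intermediate sets $\mathcal{C}_5(n)$ become very large in the mid-range of $n$ (somewhere around $n\in\{7,\dots,15\}$ the combinatorial explosion is most severe), so the feasibility of the whole argument rests on two pruning mechanisms being aggressive enough: the Gr\"obner tests must eliminate a large fraction of small candidates at the $d+2$ stage, and thereafter the cheap Combinatorial test of Lemma~\ref{COMBTEST} must carry most of the load, since (as noted in the remark following Roadmap~\ref{ROADMAP}) essentially all matrices survive the Weak Gr\"obner test once $n\ge d+5$. Ensuring equivalence-free exhaustive generation under these volumes requires reliable canonization; I would cross-check the canonical-augmentation approach (via the graph representation of Appendix~\ref{APPENDIX1} and \texttt{nauty}) against the orderly-generation form \eqref{CANFORM} to guard against bugs, and rely on substantial parallel computation. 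Verifying that the search actually terminates with $|\mathcal{C}_5(26)|=0$ — i.e.\ that no candidate of order $26$ survives — is the crux, and is what pins the maximum at precisely $24$.
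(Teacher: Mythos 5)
Your overall strategy is exactly the paper's: run Roadmap~\ref{ROADMAP} with $d=5$, prune with Proposition~\ref{PKEY}, Lemma~\ref{COMBTEST} and Corollary~\ref{WGB}, and finish by solving \eqref{KEYEQ} for the largest surviving candidate and matching it with the lifted $D_4$ configuration of Lemma~\ref{L1} and Proposition~\ref{P2}. However, there is an internal inconsistency (an off-by-one error) at the decisive termination step. You predict $|\mathcal{C}_5(25)|>0$ and $|\mathcal{C}_5(26)|=0$, and then conclude that ``this establishes that the maximum cardinality is $24$.'' That inference is invalid: emptiness of $\mathcal{C}_5(26)$ only rules out systems of $26$ or more lines. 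Membership in $\mathcal{C}_5(25)$ means that the system \eqref{KEYEQ} has a complex solution, not that an actual Gram matrix exists; so if $|\mathcal{C}_5(25)|>0$ you would still have to solve \eqref{KEYEQ} for every order-$25$ survivor and show that none of its solutions is real, positive semidefinite and of rank at most $5$ --- otherwise the maximum could be $25$. Moreover, your later statement that the maximum surviving candidate Gram matrix has order $24$ contradicts your own assumption that order-$25$ candidates survive.

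In the paper's actual computation (Table~\ref{TAB5D}) this issue does not arise: the tail of the counts is $|\mathcal{C}_5(23)|=1$, $|\mathcal{C}_5(24)|=1$, $|\mathcal{C}_5(25)|=0$, so the collapse at $n=25$ directly gives the upper bound of $24$, and the unique surviving candidate of order $24$ --- which, upon solving \eqref{KEYEQ}, is the lifted $D_4$ configuration with inner products $\{\pm1/5,\pm3/5\}$, exactly as you describe --- gives existence and uniqueness simultaneously. Your argument becomes correct once the termination claim is stated as $|\mathcal{C}_5(25)|=0$ (or, alternatively, once the hypothetical order-$25$ survivors are explicitly eliminated by the rank/positive-semidefiniteness analysis); as written, the key deduction has a hole.
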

\begin{table}[htbp]%
\centering
\caption{$\{0,\pm \alpha,\pm \beta\}$ candidate Gram matrices in $\mathbb{R}^5$}\label{TAB5D}
\begin{tabular}{lr|lr|lr|lr|lr}
\toprule
$n$ & $|\mathcal{C}_5(n)|$ & $n$ & $|\mathcal{C}_5(n)|$ & $n$ & $|\mathcal{C}_5(n)|$ & $n$ & $|\mathcal{C}_5(n)|$ & $n$ & $|\mathcal{C}_5(n)|$\\
\midrule
6 & 7954 & 10 & 48448 & 14 & 38826 & 18 & 984 & 22 & 4 \\
7 & 47418 & 11 & 54750 & 15 & 22887 & 19 & 201 & 23 & 1 \\
8 & 27905 & 12 & 56548 & 16 & 10533 & 20 & 45 & 24 & 1 \\
9 & 37381 & 13 & 52246 & 17 & 3701 & 21 & 10 & 25 & 0 \\
\bottomrule
\end{tabular}
\end{table}
\begin{proof}
The proof follows Roadmap~\ref{ROADMAP} with $d=5$. In Table~\ref{TAB5D} we display the number of surviving candidate Gram matrices, that is the numbers $|\mathcal{C}_5(n)|$ for $n\in\{6,\dots,25\}$. Since $|\mathcal{C}_5(25)|=0$, it follows that $|\mathcal{C}_5(n)|=0$ for every $n\geq 25$. The candidate Gram matrix corresponding to the lifted $D_4$ lattice vectors is not shown here, as it can be easily recovered from Lemma~\ref{L1} and Proposition~\ref{P2}, and one may check by solving \eqref{KEYEQ} that it is the only maximum solution.
\end{proof}
\begin{remark}
We remark that the Bose--Mesner algebra (see \cite{hanaki}) of a particular example of $4$-class association schemes on $24$ vertices contains the maximum Gram matrix $G$ of biangular lines in $\mathbb{R}^5$, up to equivalence. Furthermore, since $G^2=24/5G$, $G$ is a sporadic example of biangular tight frames \cite{BTF}.
\end{remark}
The main computational result of this paper is the following.
\begin{theorem}\label{THMDIM6}
The maximum cardinality of a biangular line system in $\mathbb{R}^6$ is $40$. The unique configuration can be obtained by lifting the shortest vectors of the $D_5$ lattice.
\end{theorem}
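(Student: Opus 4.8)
The plan is to follow Roadmap~\ref{ROADMAP} with $d=6$, exactly as in the proofs of Theorems~\ref{TDOD}, \ref{THMDIM4}, and \ref{THMDIM5}, carrying the equivalence-free exhaustive generation all the way up to the order at which the set of candidate Gram matrices becomes empty. First I would generate every canonical $7\times 7$ candidate Gram matrix (there are $1818859$ of these by Table~\ref{TAB1}) and place them in $\mathcal{C}_6(7)$: for a $(d{+}1)\times(d{+}1)$ matrix the constraint $\myrank\leq 6$ amounts to the vanishing of a single determinant, so every such matrix is retained. Next I would augment each by a new row and column in all canonical ways and apply the Strong Gr\"obner test of Proposition~\ref{PKEY}, keeping only those $8\times 8$ matrices for which the system \eqref{KEYEQ} has a complex solution, and store the survivors in $\mathcal{C}_6(8)$.

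From order $8$ onward I would iterate the orderly-generation step of Section~\ref{sec3}: augment every $C\in\mathcal{C}_6(i)$ with a new canonical row and column, discard any candidate whose $i\times i$ principal submatrices do not all lie in $\mathcal{C}_6(i)$ (the Combinatorial test of Lemma~\ref{COMBTEST}), and further prune by the Weak Gr\"obner test of Corollary~\ref{WGB}; as noted in Section~\ref{sec3}, once $n\geq d+5$ essentially all matrices survive Corollary~\ref{WGB}, so the combinatorial compatibility test carries the load. I would continue until the produced set $\mathcal{C}_6(n)$ is empty. Consistently with the entry $40^\ast$ for $d=6$ in Table~\ref{TABLEMAIN}, the expectation is that $\mathcal{C}_6(41)=\emptyset$ while $\mathcal{C}_6(40)\neq\emptyset$. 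Since the Gram matrix of any hypothetical $41$-line system would furnish a candidate Gram matrix of order $41$ of rank at most $6$ surviving all tests, emptiness of $\mathcal{C}_6(41)$ rigorously rules this out, and because every larger system contains an order-$41$ principal submatrix, $|\mathcal{C}_6(n)|=0$ for all $n\geq 41$; this establishes $40$ as the maximum.

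Having isolated the largest candidate Gram matrices (those of order $40$), I would invoke Proposition~\ref{PKEY} once more, this time actually solving \eqref{KEYEQ} via a Gr\"obner basis to recover the finitely many pairs $(\alpha^\ast,\beta^\ast)$ realizing $\myrank\leq 6$, form the corresponding real matrices $C(\alpha^\ast,\beta^\ast)$, and retain only the positive semidefinite ones by inspecting their characteristic polynomials. To prove uniqueness together with the stated geometric description, I would verify that the surviving positive semidefinite matrix coincides, up to the equivalence of Definition~\ref{MAINDEF}, with the one obtained by lifting the $D_5$ shortest vectors: namely by feeding the $40=2(d-1)(d-2)$ shortest vectors of $D_5$ in $\mathbb{R}^5$ (a spherical $4$-distance set with $A\subseteq\{-1,-1/2,0,1/2\}$, recoverable from Lemma~\ref{L1}) through Proposition~\ref{P2} as in Theorem~\ref{T1}. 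Comparing this reference Gram matrix against the computed one closes the argument.

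The hard part is computational rather than conceptual: the count of candidate Gram matrices explodes (already $1818859$ at order $7$ and nearly $1.8\times 10^9$ at order $8$ in the unconstrained enumeration of Table~\ref{TAB1}), so the Gr\"obner-basis filtering at order $8$ and the ensuing orderly generation up to order $40$ are only feasible with the canonization machinery of Section~\ref{sec3}, aggressive early pruning via Proposition~\ref{PKEY}, and the large-scale parallelism described at the start of this section. The genuinely delicate point is guaranteeing that the generation is both exhaustive and equivalence-free, so that the emptiness of $\mathcal{C}_6(41)$ is a \emph{proof} of the upper bound and not an artifact of a missed augmentation branch; this is precisely why the two independent canonization strategies of Section~\ref{sec3}, the lexicographic orderly form \eqref{CANFORM} and the graph-based approach via \textup{`nauty'}, are run against each other as a cross-check.
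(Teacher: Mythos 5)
Your proposal is correct and follows essentially the same route as the paper: the proof of Theorem~\ref{THMDIM6} is exactly an instantiation of Roadmap~\ref{ROADMAP} with $d=6$ (seeding with the $7\times 7$ canonical candidate Gram matrices, pruning by Proposition~\ref{PKEY} at order $8$ and by Lemma~\ref{COMBTEST}/Corollary~\ref{WGB} thereafter), terminating with $|\mathcal{C}_6(41)|=0$ as recorded in Table~\ref{TAB6D}, and then solving \eqref{KEYEQ} for the unique order-$40$ survivor to identify it with the lifted $D_5$ configuration of Lemma~\ref{L1} and Proposition~\ref{P2}. Your additional remarks on exhaustiveness and cross-checking the two canonization methods match the paper's own discussion in Section~\ref{sec3}.
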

\begin{table}[htbp]%
	\centering
	\caption{$\{0,\pm \alpha,\pm \beta\}$ candidate Gram matrices in $\mathbb{R}^6$}\label{TAB6D}\scriptsize
	\begin{tabular}{lr|lr|lr|lr|lr}
		\toprule
$n$ & $|\mathcal{C}_6(n)|$ & $n$ & $|\mathcal{C}_6(n)|$ & $n$ & $|\mathcal{C}_6(n)|$ & $n$ & $|\mathcal{C}_6(n)|$ & $n$ & $|\mathcal{C}_6(n)|$\\
\midrule
  &   & 14 & 8000713 & 21 & 34995847 & 28 & 1535902 & 35 & 363 \\
8 & 6883459 & 15 & 11810513 & 22 & 30226589 & 29 & 646252 & 36 & 85 \\
9 & 3170550 & 16 & 17409677 & 23 & 23679948 & 30 & 243144 & 37 & 18 \\
10 & 4107292 & 17 & 24048177 & 24 & 16808810 & 31 & 81562 & 38 & 5 \\
11 & 5260036 & 18 & 30449143 & 25 & 10794327 & 32 & 24461 & 39 & 1 \\
12 & 5781148 & 19 & 35103515 & 26 & 6260018 & 33 & 6554 & 40 & 1 \\
13 & 6239734 & 20 & 36779026 & 27 & 3270750 & 34 & 1610 & 41 & 0 \\
		\bottomrule
	\end{tabular}
\end{table}\normalsize
\begin{proof}
The proof follows Roadmap~\ref{ROADMAP} with $d=6$. In Table~\ref{TAB6D} we display the number of surviving candidate Gram matrices, that is the numbers $|\mathcal{C}_6(n)|$ for $n\in\{8,\dots,41\}$. Since $|\mathcal{C}_6(41)|=0$, it follows that $|\mathcal{C}_6(n)|=0$ for every $n\geq 41$. The candidate Gram matrix corresponding to the lifted $D_5$ lattice vectors is not shown here, as it can be easily recovered from Lemma~\ref{L1} and Proposition~\ref{P2}, and one may check by solving \eqref{KEYEQ} that it is the only maximum solution.
\end{proof}
In dimension $5$ and $6$ the largest biangular line systems with irrational angles consist of $20$ and $24$ lines respectively, each having the very same inner product set $\{\pm(3-2\sqrt5)/11,\pm(4+\sqrt5)/11\}$ as one of the largest configurations in $\mathbb{R}^4$ (cf.~Theorem~\ref{THMDIM4}). Examples of these are shown in Appendix~\ref{APPENDIXB}.
\begin{remark}
In $\mathbb{R}^6$ two $27\times 27$ candidate Gram matrices were found corresponding to Gram matrices with angle set $\{\pm1/4,\pm1/2\}$. It turns out, that one of these is the largest spherical $2$-distance set \cite{L}, \cite{NM}, and the other one belongs to the Bose--Mesner algebra of a $4$-class association scheme \cite{hanaki}. See Appendix~\ref{APPENDIXB}.
\end{remark}
We conclude this section with the following by-products of our classification.
\begin{corollary}\label{CABOVE}
The largest infinite family of biangular lines in $\mathbb{R}^d$ for $d\in\{3,4,5,6\}$ is formed by $6$, $6$, $10$, and $16$ lines, respectively.
\end{corollary}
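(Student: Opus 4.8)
The plan is to extract both a lower and a matching upper bound from the exhaustive enumeration already carried out in Theorems~\ref{TDOD}--\ref{THMDIM6}. First I would fix the meaning of an \emph{infinite family} in the language of Section~\ref{sec3}: it is a one-parameter continuum of pairwise nonisometric biangular line systems of a common cardinality $n$, and it corresponds to a single candidate Gram matrix $C(\alpha,\beta)$ whose real feasible locus
\[
\{(\alpha^\ast,\beta^\ast)\in\mathbb{R}^2 : \myrank C(\alpha^\ast,\beta^\ast)\le d,\ C(\alpha^\ast,\beta^\ast)\succeq 0,\ \text{subject to \eqref{ASSUME}}\}
\]
is infinite---a semialgebraic arc rather than a finite set of points. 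With this identification the task reduces to determining, for each $d\in\{3,4,5,6\}$, the largest order of a candidate Gram matrix whose solution variety of \eqref{KEYEQ} is positive-dimensional and meets the positive semidefinite cone in a nondegenerate arc.

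For the lower bounds I would exhibit explicit families. For $d\in\{4,5,6\}$ the construction is Proposition~\ref{PINF}: lifting a maximum spherical $2$-distance set in $\mathbb{R}^{d-1}$ yields an infinite family, parametrized by the height $h$, of cardinality equal to the size of that $2$-distance set. Since the maximum $2$-distance sets in $\mathbb{R}^3$, $\mathbb{R}^4$, and $\mathbb{R}^5$ have $6$, $10$, and $16$ points (e.g.\ the octahedron, the midpoints of the edges of the $4$-simplex, and a Clebsch-type configuration), this produces infinite families of $6$, $10$, and $16$ lines; that they are genuinely infinite follows because the inner products $h^2+(1-h^2)\alpha$ and $h^2+(1-h^2)\beta$ vary with $h$, giving nonisometric systems. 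For $d=3$ lifting the regular pentagon yields only $5$ lines, so instead I would invoke the continuous twisting of the $6$ diagonals of the icosahedron recorded just before Proposition~\ref{PINF}, which furnishes an infinite family of $6$ lines.

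For the matching upper bounds I would augment the classification loop of Roadmap~\ref{ROADMAP}. As each surviving candidate Gram matrix $C\in\mathcal{C}_d(n)$ is processed, I would compute, from a Gr\"obner basis of the ideal generated by the left-hand sides of \eqref{KEYEQ}, the Krull dimension of its solution variety, flag those whose dimension is positive, and then test positive semidefiniteness along the resulting curve. Because $|\mathcal{C}_d(n)|=0$ beyond the orders tabulated in Tables~\ref{TAB3D}--\ref{TAB6D}, only finitely many matrices need to be inspected, and one reads off that the largest order carrying a positive-dimensional feasible arc is $6$, $6$, $10$, and $16$ for $d=3,4,5,6$, matching the lower bounds exactly.

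The hard part will be the second test rather than the first. For each of the very many candidate Gram matrices one must reliably distinguish a genuine one-dimensional feasible curve from a merely finite point set---a Hilbert-dimension computation on the ideal of \eqref{KEYEQ}---and then confirm that this curve actually enters the positive semidefinite cone, since a degenerate or purely complex component contributes no real infinite family. The latter is a semialgebraic condition, most cleanly handled by tracking the signs of the characteristic polynomial of $C(\alpha^\ast,\beta^\ast)$ as the pair moves along the curve. Organizing both tests so that they sweep the enormous lists $\mathcal{C}_6(n)$ (cf.~Table~\ref{TAB6D}) without overlooking a sporadic positive-dimensional component is the delicate point; conceptually, however, the result is an immediate by-product of the completed enumeration.
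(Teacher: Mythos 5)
Your proposal is correct and follows the same two-pronged strategy as the paper: identical lower-bound constructions (the twisted icosahedron for $d=3$, and Proposition~\ref{PINF} applied to the maximum spherical $2$-distance sets of sizes $6$, $10$, $16$ in $\mathbb{R}^{d-1}$ for $d\in\{4,5,6\}$), and upper bounds read off from the exhaustive enumeration by detecting candidate Gram matrices whose ideal from \eqref{KEYEQ} has positive dimension. The one genuine difference is in how the upper-bound check is organized, and it is exactly the point you flag as delicate at the end. You propose to sweep the full lists $\mathcal{C}_d(n)$ and compute a Krull dimension for every surviving matrix, which for $d=6$ means hundreds of millions of Gr\"obner-basis computations (and note that for $n\geq d+5$ the paper performs no Gr\"obner computations at all during generation, so these would all be new work). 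The paper instead makes a small structural observation that dissolves this difficulty: if $C(\alpha,\beta)$ is a parametric family of biangular line systems, then so is every subsystem of it, i.e.\ positive-dimensionality is hereditary under taking principal submatrices. Consequently any large parametric family is reachable by augmenting only those (rather few) candidate Gram matrices whose ideal already has positive dimension, so the infinite families form their own small, self-contained augmentation tree and no post-processing sweep of the enormous lists is needed. Your argument is logically sound without this lemma, but adding it turns a massive computation into a cheap by-product of the classification, which is what the paper means by calling the result a by-product.
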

\begin{proof}
For $d=3$ we have the twisted icosahedron \cite{BTF}. For $d\geq 4$, we can use Proposition~\ref{PINF} and well-known spherical $2$-distance sets (see \cite{L}, \cite{NM}, Example~\ref{EXPET} and Example~\ref{EX3}) in $\mathbb{R}^{d-1}$ to establish the claimed lower bounds. To see that these are indeed the largest, one should inspect the candidate Gram matrices we generated. It is easy to see that if $C(\alpha,\beta)$ is a parametric family of biangular line systems, then so is every subsytem of it. Therefore it is enough to augment those (rather few) candiate Gram matrices for which the dimension of the ideal, generated by \eqref{KEYEQ} is positive (see \cite{BECK}).
\end{proof}
\begin{corollary}\label{RELBDCOR}
The biangular line systems meeting the relative bound in dimension $d\in\{3,4,5,6\}$ for $\alpha^2+\beta^2<6/(d+4)$ are exactly those listed in Table~\ref{TABREL}.
\end{corollary}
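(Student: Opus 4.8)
The plan is to read off the classification directly from the exhaustive enumeration of candidate Gram matrices carried out in Section~\ref{sec4}. Recall that, following Roadmap~\ref{ROADMAP}, for each $d\in\{3,4,5,6\}$ we generated up to the equivalence of Definition~\ref{MAINDEF} every candidate Gram matrix of every order $n$ for which \eqref{KEYEQ} admits a solution, running the procedure until $\mathcal{C}_d(n)$ becomes empty. Whether a system meets the relative bound is invariant under this equivalence, and by Theorem~\ref{RELBDS} it depends only on the angle data $(d,\alpha,\beta)$ together with the cardinality $n$; hence it suffices to scan this finite list. Note that the single-symbol (equiangular) matrices and the $\{0,\alpha\}$ matrices are included in the enumeration, and all of them fall under Theorem~\ref{RELBDS} once we identify the two angle magnitudes appropriately (taking $\beta=\alpha$ in the equiangular case, e.g.\ for the icosahedron, and $\alpha=0$ in the $\{0,\pm\beta\}$ case, e.g.\ for the $D_4$ and $E_6$ configurations).

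First I would, for each candidate Gram matrix $C(\alpha,\beta)$ in $\bigcup_n\mathcal{C}_d(n)$, solve the system \eqref{KEYEQ} to recover all real realizations $(\alpha^\ast,\beta^\ast)$ with $\myrank C(\alpha^\ast,\beta^\ast)\le d$, and retain those that are positive semidefinite; these are exactly the genuine biangular line systems of the given order. It is essential to use the \emph{full} enumeration rather than only the largest examples of Theorems~\ref{TDOD}--\ref{THMDIM6}: a configuration meeting the relative bound need not be maximum in the unrestricted sense, since larger systems with larger angles may exist. For instance, in $\mathbb{R}^6$ both $27$-line systems and the $36$-line $E_6$ system are dominated by the $40$-line system of Theorem~\ref{THMDIM6}, yet all three meet the relative bound.

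Next I would apply two filters to each realized system $(\alpha^\ast,\beta^\ast,n)$. The first is the strict angle condition $(\alpha^\ast)^2+(\beta^\ast)^2<6/(d+4)$, which keeps us in the regime where the denominator of \eqref{RELBD} is positive and away from the degenerate boundary. The second is the equality test: by Theorem~\ref{RELBDS} the system meets the relative bound precisely when $n$ equals the right-hand side of \eqref{RELBD} evaluated at $(d,\alpha^\ast,\beta^\ast)$, a single rational identity to check. Collecting all surviving triples and comparing them with Table~\ref{TABREL}, one finds the icosahedron and dodecahedron in $\mathbb{R}^3$, the $D_4$ configuration in $\mathbb{R}^4$, no example at all in $\mathbb{R}^5$, and the two $27$-line systems together with the $E_6$ configuration in $\mathbb{R}^6$, exactly as claimed.

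The main obstacle is completeness rather than any single calculation: one must be certain that every candidate Gram matrix of every relevant order has in fact been generated and tested, including the non-maximal ones. This is underwritten by the exhaustiveness of the framework of Section~\ref{sec3}, in which each $\mathcal{C}_d(n)$ is equivalence-free and complete and the iteration is continued until $\mathcal{C}_d(n)=\emptyset$. A secondary subtlety, which explains the qualifier ``essentially complete'' noted before Table~\ref{TABREL}, is the boundary locus $\alpha^2+\beta^2=6/(d+4)$: there the factor $\left(\tfrac{6}{d+4}-\alpha^2-\beta^2\right)$ appearing in the equality conditions of Theorem~\ref{RELBDS} vanishes identically, so the characterization degenerates and may admit further tight configurations (spherical $4$-designs). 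The strict inequality in the statement is precisely what excludes this boundary from our count.
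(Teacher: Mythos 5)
Your overall strategy (scan the exhaustive enumeration, filter by the equality test) is genuinely different from the paper's argument, and as written it has a gap: it treats the realizations of each candidate Gram matrix as a finite list of pairs $(\alpha^\ast,\beta^\ast)$ to which pointwise filters can be applied (``a single rational identity to check''). But for many candidates the solution set of \eqref{KEYEQ} is positive-dimensional, and these are not exotic exceptions: the twisted icosahedron in $\mathbb{R}^3$, the lifted spherical $2$-distance sets of Proposition~\ref{PINF} in dimensions $4$--$6$ (by Corollary~\ref{CABOVE} these families have $6$, $6$, $10$, and $16$ lines), and every subsystem of such a family. For these there is no finite list of realizations; deciding which members are positive semidefinite and at which parameter values equality in \eqref{RELBD} holds requires augmenting the polynomial system and analyzing a positive-dimensional ideal --- precisely the ``delicate task'' the paper flags in Section~\ref{sec3}. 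The problem is not hypothetical: in $\mathbb{R}^3$ the twisted family has $6$ lines and a whole interval of its members passes your strict angle filter $\alpha^2+\beta^2<6/7$, so your procedure, applied ``to each realized system,'' does not terminate there. (A secondary issue is that Roadmap~\ref{ROADMAP} only solves \eqref{KEYEQ} and performs the semidefiniteness analysis for the \emph{largest} candidate matrices; your scan would additionally require doing this for tens of millions of intermediate matrices, e.g.\ $|\mathcal{C}_6(20)|\approx 3.7\cdot 10^7$.)

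The missing idea is the paper's design-theoretic reduction, which is exactly what circumvents the infinite families. Under the strict inequality $\alpha^2+\beta^2<6/(d+4)$, equality in \eqref{RELBD} forces both Gegenbauer sums in the equality condition of Theorem~\ref{RELBDS} to vanish, so the antipodal double of $\mathcal{X}$ is a spherical $5$-design; hence $|\mathcal{X}|\geq d(d+1)/2$. For $d\in\{4,5,6\}$ this threshold ($10$, $15$, $21$) strictly exceeds the size of the largest infinite family ($6$, $10$, $16$ by Corollary~\ref{CABOVE}), so only finitely many Gram matrices --- and thus finitely many inner product values --- need to be plugged into \eqref{RELBD}. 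For $d=3$ the threshold coincides with the size of the twisted family, and that case is settled by the uniqueness of the tight spherical $5$-design (the icosahedron); systems of $7$ to $10$ lines are again finite in number and can be checked directly, producing the dodecahedron. Your final list agrees with Table~\ref{TABREL}, but to make the argument complete you must either insert this $5$-design step, or else explicitly explain how the positive-dimensional ideals are handled (adjoining the cleared equality condition of \eqref{RELBD} to \eqref{KEYEQ} and solving the augmented system), neither of which appears in your write-up.
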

\begin{proof}
Let $\mathcal{X}\subset\mathbb{R}^d$ span a biangular line system meeting the relative bound \eqref{RELBD}. Since $\alpha^2+\beta^2<6/(d+4)$, we have $\sum_{x,x'\in\mathcal{X}}C_2^{((d-2)/2)}(\left\langle x,x'\right\rangle)=0$ and $\sum_{x,x'\in\mathcal{X}}C_4^{((d-2)/2)}(\left\langle x,x'\right\rangle)=0$. In particular, the antipodal double $\mathcal{Y}:=\{x\colon x\in\mathcal{X}\}\cup\{-x\colon x\in\mathcal{X}\}$ is a spherical $5$-design \cite{BAN9}, \cite{BOY}, and hence $|\mathcal{X}|=|\mathcal{Y}|/2\geq d(d+1)/2$. For $d=3$ the only tight spherical $5$-design is the icosahedron \cite{BAN9}, \cite[Example~5.16]{DGS2}. For $d\geq 4$ it follows from Corollary~\ref{CABOVE} that the number of Gram matrices of size $|\mathcal{X}|$ is finite, therefore one may plug in the (finitely many) inner products $\alpha^\ast$ and $\beta^\ast$ into \eqref{RELBD} to test equality. This yields Table~\ref{TABREL} for $d\leq 6$.
\end{proof}
\begin{remark}\label{RELBDRM}
If $\alpha^2+\beta^2=6/(d+4)$ and there is equality in the relative bound \eqref{RELBD}, then necessarily $\frac{d^2+3|\mathcal{X}|-4}{(d+2)(d+4)}=(|\mathcal{X}|-1)\beta^2(\frac{6}{d+4}-\beta^2)$. For fixed $d$ and $|\mathcal{X}|$ this in turn determines the possible inner products in $A(\mathcal{X})$. Then one may go through all candidate Gram matrices and check which of these inner products are compatible with the solutions of \eqref{KEYEQ}. Since we tend to believe that for $d\leq 6$ there are no biangular lines of this type, we have not gone through the details of this lengthy and seemingly very tedious task.
\end{remark}
%%%%%%%%%%%%%%%%%%%%%%%%%%%%%%%%%%%%%%%%%%%%
% SECTION 5: Results on multiangular lines%
%%%%%%%%%%%%%%%%%%%%%%%%%%%%%%%%%%%%%%%%%%%%
\section{Results on multiangular lines}\label{sec5}
The theory developed in Section~\ref{sec3} can be generalized to multiangular lines in a straightforward manner. The main challenge in our study is solving (the multiangular analogue of) the system of equations \eqref{KEYEQ}. Indeed, the efficiency of computing a Gr\"obner basis very much depends on the number of variables \cite{BECK}, and $4$-angular line systems are the largest ones our methods can currently handle. In this section we briefly report on our computational results on multiangular lines.
\subsection{Multiangular lines in $\mathbb{R}^3$}
It is well-known that in $\mathbb{R}^3$ the main diagonals of the platonic icosahedron forms the largest equiangular line system, and we showed in Theorem~\ref{TDOD} that the main diagonals of the platonic dodecahedron forms the largest biangular line system. It is natural to ask what are the multiangular analogues of these objects.

It is well-known that on the plane the maximum cardinality of $m$-angular lines is $2m+1$, and an example is coming from the main diagonals of the convex regular $(4m+2)$-gon \cite{NM}.
\begin{theorem}\label{MULTHM}
The maximum cardinality of a triangular line system in $\mathbb{R}^3$ is $12$. There are exactly two such configurations coming from the following candidate Gram matrix\textup{:}
\begin{equation}\label{TRI1}
C(\alpha,\beta,\gamma)=\left[\begin{smallmatrix}
1 & \alpha & \alpha & \alpha & \alpha & \beta & \beta & \beta & \beta & \gamma & \gamma & \gamma \\
\alpha & 1 & \beta & \gamma & \gamma & \alpha & \beta & \beta & \gamma & \alpha & \alpha & \beta \\
\alpha & \beta & 1 & \gamma & -\alpha & \gamma & -\beta & -\gamma & \alpha & \beta & -\beta & \alpha \\
\alpha & \gamma & \gamma & 1 & \beta & \beta & \alpha & \gamma & \beta & \alpha & \beta & \alpha \\
\alpha & \gamma & -\alpha & \beta & 1 & -\beta & \gamma & \alpha & -\gamma & \beta & \alpha & -\beta \\
\beta & \alpha & \gamma & \beta & -\beta & 1 & -\gamma & -\beta & \alpha & \gamma & -\alpha & \alpha \\
\beta & \beta & -\beta & \alpha & \gamma & -\gamma & 1 & \alpha & -\beta & \gamma & \alpha & -\alpha \\
\beta & \beta & -\gamma & \gamma & \alpha & -\beta & \alpha & 1 & -\alpha & \alpha & \gamma & -\beta \\
\beta & \gamma & \alpha & \beta & -\gamma & \alpha & -\beta & -\alpha & 1 & \alpha & -\beta & \gamma \\
\gamma & \alpha & \beta & \alpha & \beta & \gamma & \gamma & \alpha & \alpha & 1 & \beta & \beta \\
\gamma & \alpha & -\beta & \beta & \alpha & -\alpha & \alpha & \gamma & -\beta & \beta & 1 & -\gamma \\
\gamma & \beta & \alpha & \alpha & -\beta & \alpha & -\alpha & -\beta & \gamma & \beta & -\gamma & 1 \\
\end{smallmatrix}\right],
\end{equation}
namely $C((-7+4\sqrt2)/17,(5+2\sqrt2)/17,(-3-8\sqrt2)/17)$ is the truncated cube, and $C((-7-4\sqrt2)/17,(5-2\sqrt2)/17,(-3+8\sqrt2)/17)$ is the small rhombicuboctahedron.
\end{theorem}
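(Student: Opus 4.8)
The plan is to specialize the computational machinery of Section~\ref{sec3}, which the paper already notes carries over to the multiangular setting, and to run the procedure of Roadmap~\ref{ROADMAP} with $d=3$ for three angles. The structural modifications are minimal: a candidate Gram matrix now has off-diagonal entries in $\{0,\pm\alpha,\pm\beta,\pm\gamma\}$ over $\mathbb{Q}[\alpha,\beta,\gamma]$ (with at most three of the four symbols occurring), equivalence is taken up to signed permutation matrices together with a relabeling $\sigma$ permuting the six signed values $\{\pm\alpha,\pm\beta,\pm\gamma\}$ compatibly with negation, and the non-degeneracy condition \eqref{ASSUME} is replaced by
\begin{equation*}
\alpha\beta\gamma(\alpha^2-\beta^2)(\alpha^2-\gamma^2)(\beta^2-\gamma^2)(\alpha^2-1)(\beta^2-1)(\gamma^2-1)\neq 0.
\end{equation*}
The Gr\"obner tests of Proposition~\ref{PKEY} and Corollary~\ref{WGB} are adjusted accordingly, with the auxiliary equation setting $\omega$ times the above product equal to $-1$.

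First I would generate the canonical candidate Gram matrices of order $d+1=4$, then augment by one row and column in all admissible ways, canonize, and apply the strong Gr\"obner test of Proposition~\ref{PKEY} --- now over the four-variable ring $\mathbb{Q}[\alpha,\beta,\gamma,\omega]$ --- to discard order-$5$ matrices admitting no rank-$3$ realization subject to the non-degeneracy constraint. For each larger order $n$ I would prune using the combinatorial test of Lemma~\ref{COMBTEST} (every order-$n$ principal submatrix must already have survived) followed by the weak Gr\"obner test of Corollary~\ref{WGB}, continuing until no matrices survive; termination is guaranteed by the appropriate analogue of the absolute bound of Theorem~\ref{T4}. The expected outcome is that the last nonempty stage occurs at order $12$ and contains, up to equivalence, the single candidate Gram matrix \eqref{TRI1}.

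For this maximal matrix I would solve the triangular version of \eqref{KEYEQ}, namely the vanishing of all $4\times 4$ minors together with the non-degeneracy condition, to obtain the finitely many real triples $(\alpha^\ast,\beta^\ast,\gamma^\ast)$ of rank at most $3$, and then retain only the positive semidefinite specializations. This should produce exactly two solutions, Galois-conjugate under $\sqrt2\mapsto-\sqrt2$, with the stated values $((-7\pm4\sqrt2)/17,(5\pm2\sqrt2)/17,(-3\mp8\sqrt2)/17)$. Finally I would reconstruct unit-vector realizations by Cholesky factorization and match the resulting $24$ antipodal vertex pairs against the known coordinates of the truncated cube and the small rhombicuboctahedron to confirm the geometric identification.

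The main obstacle is computational rather than conceptual. Adding the third angle enlarges the polynomial ring by a variable, and since the cost of a degree reverse lexicographic Gr\"obner basis grows steeply with the number of variables, each Gr\"obner test becomes substantially more expensive than in the biangular case; at the same time the number of candidate Gram matrices to be generated and canonized at each order grows much faster than the counts recorded in Table~\ref{TAB3D}. The crux is therefore the feasibility of exhaustively processing every order up to termination, which hinges on the aggressive pruning afforded by Lemma~\ref{COMBTEST} and Corollary~\ref{WGB} together with substantial computational resources, rather than on any individual algebraic difficulty.
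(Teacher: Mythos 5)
Your proposal is correct and follows essentially the same route as the paper: the authors likewise run Roadmap~\ref{ROADMAP} with $d=3$ over the symbol set $\{0,\pm\alpha,\pm\beta,\pm\gamma\}$ (at most three symbols appearing), record the surviving candidate counts (Table~\ref{TABMULT1}), find that order $13$ yields nothing and that order $12$ contains the unique candidate \eqref{TRI1}, and then solve the analogue of \eqref{KEYEQ} to extract the two positive semidefinite specializations identified with the truncated cube and the small rhombicuboctahedron. The only cosmetic difference is that the paper terminates as soon as an empty level $|\mathcal{C}_3(13)|=0$ is reached rather than invoking an explicit multiangular absolute bound, but this does not affect correctness.
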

\begin{table}[htbp]%
	\centering
	\caption{$\{0,\pm \alpha,\pm \beta, \pm\gamma\}$ candidate Gram matrices in $\mathbb{R}^3$}\label{TABMULT1}
	\begin{tabular}{c|cccccccccccc}
	\toprule
	$n$ & 2 & 3 &  4 &  5 &  6 & 7 & 8 & 9 & 10 & 11 & 12 & 13\\
	$|\mathcal{C}_3(n)|$ & 2 & 7 & 62 & 610 & 271 & 104 & 46 & 19 &  6 &  1 & 1 & 0\\
	\bottomrule
\end{tabular}
\end{table}
\begin{proof}
The proof follows analogously to Roadmap~\ref{ROADMAP} with $d=3$. In Table~\ref{TABMULT1} we display the number of surviving candidate Gram matrices with symbols $\{0,\pm\alpha,\pm\beta$, $\pm\gamma\}$ (where at most three out of these four symbols appear), that is the numbers $|\mathcal{C}_3(n)|$ for $n\in\{2,\dots,13\}$. Since $|\mathcal{C}_3(13)|=0$, it follows that $|\mathcal{C}_3(n)|=0$ for every $n\geq 13$. In addition, there is a unique maximum candidate Gram matrix of size $12$, as shown in \eqref{TRI1}. Analogous equations to \eqref{KEYEQ} imply the claimed solutions.
\end{proof}
\begin{theorem}\label{IDOD}
The maximum cardinality of a $4$-angular line system in $\mathbb{R}^3$ is $15$. There is a unique configuration coming from the following candidate Gram matrix\textup{:}
\begin{equation}\label{icosid}
C(\alpha,\beta,\gamma)=\left[\begin{smallmatrix}1 & 0 & 0 & \alpha & \beta & \gamma & \alpha & \beta & \gamma & \alpha & \beta & \gamma & \alpha & \beta & \gamma \\
0 & 1 & 0 & \beta & \gamma & \alpha & \beta & \gamma & \alpha & -\beta & -\gamma & -\alpha & -\beta & -\gamma & -\alpha \\
0 & 0 & 1 & \gamma & \alpha & \beta & -\gamma & -\alpha & -\beta & \gamma & \alpha & \beta & -\gamma & -\alpha & -\beta \\
\alpha & \beta & \gamma & 1 & 0 & 0 & \gamma & -\alpha & -\beta & \alpha & -\beta & \gamma & -\beta & -\gamma & \alpha \\
\beta & \gamma & \alpha & 0 & 1 & 0 & -\alpha & \beta & \gamma & -\beta & \gamma & -\alpha & -\gamma & -\alpha & \beta \\
\gamma & \alpha & \beta & 0 & 0 & 1 & -\beta & \gamma & \alpha & \gamma & -\alpha & \beta & \alpha & \beta & -\gamma \\
\alpha & \beta & -\gamma & \gamma & -\alpha & -\beta & 1 & 0 & 0 & -\beta & -\gamma & \alpha & \alpha & -\beta & \gamma \\
\beta & \gamma & -\alpha & -\alpha & \beta & \gamma & 0 & 1 & 0 & -\gamma & -\alpha & \beta & -\beta & \gamma & -\alpha \\
\gamma & \alpha & -\beta & -\beta & \gamma & \alpha & 0 & 0 & 1 & \alpha & \beta & -\gamma & \gamma & -\alpha & \beta \\
\alpha & -\beta & \gamma & \alpha & -\beta & \gamma & -\beta & -\gamma & \alpha & 1 & 0 & 0 & \gamma & -\alpha & -\beta \\
\beta & -\gamma & \alpha & -\beta & \gamma & -\alpha & -\gamma & -\alpha & \beta & 0 & 1 & 0 & -\alpha & \beta & \gamma \\
\gamma & -\alpha & \beta & \gamma & -\alpha & \beta & \alpha & \beta & -\gamma & 0 & 0 & 1 & -\beta & \gamma & \alpha \\
\alpha & -\beta & -\gamma & -\beta & -\gamma & \alpha & \alpha & -\beta & \gamma & \gamma & -\alpha & -\beta & 1 & 0 & 0 \\
\beta & -\gamma & -\alpha & -\gamma & -\alpha & \beta & -\beta & \gamma & -\alpha & -\alpha & \beta & \gamma & 0 & 1 & 0 \\
\gamma & -\alpha & -\beta & \alpha & \beta & -\gamma & \gamma & -\alpha & \beta & -\beta & \gamma & \alpha & 0 & 0 & 1 \\
\end{smallmatrix}\right],
\end{equation}
namely $C((1+\sqrt5)/4,(1-\sqrt5)/4,1/2)$ is the icosidodecahedron.
\end{theorem}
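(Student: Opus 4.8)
The plan is to follow Roadmap~\ref{ROADMAP} essentially verbatim, specialized to $d=3$ and promoted from the biangular to the $4$-angular setting, exactly as was done for the triangular case in Theorem~\ref{MULTHM}. The only structural change is that candidate Gram matrices now carry off-diagonal entries drawn from the enlarged symbol set $\{0,\pm\alpha,\pm\beta,\pm\gamma,\pm\delta\}$, of which at most four are allowed to occur simultaneously, so that at most four distinct squared inner products arise. The nondegeneracy assumption \eqref{ASSUME} is replaced by its obvious $4$-angular analogue, requiring the nonzero symbols to be pairwise distinct in absolute value, none equal to $\pm1$, and none vanishing; the auxiliary variable $\omega$ in \eqref{KEYEQ} then clears the corresponding product, and the strong and weak Gr\"obner tests (Proposition~\ref{PKEY} and Corollary~\ref{WGB}) carry over unchanged, with the relabeling $\sigma$ now permuting the four signed symbols.

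First I would generate, by orderly generation via the canonical form \eqref{CANFORM}, all canonical candidate Gram matrices of order $d+1=4$, retaining the survivors of the strong Gr\"obner test in $\mathcal{C}_3(4)$. Then I would inductively augment each $C\in\mathcal{C}_3(n)$ by one row and column in every canonical way, pruning first with the combinatorial test (Lemma~\ref{COMBTEST}), which demands that all order-$n$ principal submatrices already lie in $\mathcal{C}_3(n)$, and then with the weak Gr\"obner test (Corollary~\ref{WGB}). I would continue until the generation terminates, that is until some $\mathcal{C}_3(N+1)=\emptyset$, recording the cardinalities $|\mathcal{C}_3(n)|$ in a table analogous to Table~\ref{TABMULT1}. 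The expected outcome is that $|\mathcal{C}_3(16)|=0$, which forces $|\mathcal{C}_3(n)|=0$ for all $n\geq16$ (any larger admissible matrix would contain an order-$16$ admissible principal submatrix), and that a single maximum candidate Gram matrix of order $15$ survives, namely \eqref{icosid}.

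Having isolated \eqref{icosid}, I would recover the realizing Gram matrices by solving the $4$-angular analogue of \eqref{KEYEQ} attached to this matrix: vanishing of all $4\times4$ minors together with the nondegeneracy equation in $\omega$, from whose Gr\"obner basis I would extract the finitely many triples $(\alpha^\ast,\beta^\ast,\gamma^\ast)$ with $\myrank C\leq3$. I would then retain only the real, positive semidefinite solutions, checked by direct substitution and an eigenvalue or characteristic-polynomial computation, and verify that, unlike the triangular case in Theorem~\ref{MULTHM} (where \eqref{TRI1} admitted a conjugate pair of feasible solutions), here exactly one triple remains, namely $(\alpha^\ast,\beta^\ast,\gamma^\ast)=((1+\sqrt5)/4,(1-\sqrt5)/4,1/2)$, realizing the $15$ main diagonals of the icosidodecahedron. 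Because the canonical form \eqref{CANFORM} already factors out signed permutations and relabelings, a single maximum candidate together with a single admissible parameter value yields a single configuration up to isometry, which is the asserted uniqueness.

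The main obstacle I anticipate is computational rather than conceptual. Once four nonzero symbols are present, the Gr\"obner computations underlying the weak and strong tests, and especially the final solve for \eqref{icosid}, involve up to five variables $\alpha,\beta,\gamma,\delta,\omega$, which, as noted at the start of Section~\ref{sec5}, is at the very edge of what the method can handle. Combined with the rapid growth in the number of candidate Gram matrices at intermediate orders (already visible for the triangular case in Table~\ref{TABMULT1}), the augment-and-prune loop will dominate the cost, so the delicate point is to organize the canonization and the test pipeline to discard degenerate and rank-violating matrices early enough that the surviving sets $\mathcal{C}_3(n)$ remain small enough for the enumeration to terminate.
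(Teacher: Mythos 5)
Your proposal is correct and follows essentially the same route as the paper: Roadmap-based orderly generation over the symbol set $\{0,\pm\alpha,\pm\beta,\pm\gamma,\pm\delta\}$ (at most four symbols appearing), Gr\"obner-based rank filtering, termination at $|\mathcal{C}_3(16)|=0$ with the unique order-$15$ survivor \eqref{icosid}, and a final solve of the analogue of \eqref{KEYEQ} followed by a positive-semidefiniteness check. Two minor discrepancies, neither fatal: the paper deviates from the verbatim Roadmap by generating \emph{all} $5\times5$ candidates without filtering and then applying the strong test (Proposition~\ref{PKEY}, all submatrices) from order $6$ onward --- rather than your strong test at order $4$ and weak test (Corollary~\ref{WGB}) thereafter, which are sound but prune far less when four symbol variables make the minor equations underdetermined --- and the final solve actually gives $4\alpha^2-2\alpha-1=0$, $\beta=1/2-\alpha$, $\gamma=1/2$, hence \emph{two} feasible triples rather than your predicted single one, though they are equivalent under the relabeling $\alpha\leftrightarrow\beta$, so uniqueness of the icosidodecahedron follows exactly by the factoring-out argument you already give.
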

\begin{table}[htbp]%
	\centering
	\caption{$\{0,\pm \alpha,\pm \beta, \pm\gamma,\pm\delta\}$ candidate Gram matrices in $\mathbb{R}^3$}\label{TABMULT2}
	\begin{tabular}{lr|lr|lr|lr|lr}
		\toprule
		$n$ & $|\mathcal{C}_3(n)|$ & $n$ & $|\mathcal{C}_3(n)|$ & $n$ & $|\mathcal{C}_3(n)|$ & $n$ & $|\mathcal{C}_3(n)|$ & $n$ & $|\mathcal{C}_3(n)|$\\
		\midrule
		2 & 2  & 5 & 7014 &  8 & 632 & 11 & 32 & 14 & 1\\ 
		3 & 7  & 6 & 7744 &  9 & 276 & 12 & 14 & 15 & 1\\
		4 & 97 & 7 & 1655 & 10 & 104 & 13 &  3 & 16 & 0\\
		\bottomrule
	\end{tabular}
\end{table}
\begin{proof}
The proof follows analogously to Roadmap~\ref{ROADMAP} with $d=3$, with the following noted difference: first we generated all $5\times 5$ candidate Gram matrices, and used Proposition~\ref{PKEY} for filtering the $6\times 6$ (and larger) matrices. In Table~\ref{TABMULT2} we display the number of surviving candidate Gram matrices with symbols $\{0,\pm\alpha,\pm\beta,\pm\gamma,\pm\delta\}$, (where at most four out of these five symbols appear), that is, the numbers $|\mathcal{C}_3(n)|$ for $n\in\{2,\dots,16\}$. Since $|\mathcal{C}_3(16)|=0$, it follows that $|\mathcal{C}_3(n)|=0$ for every $n\geq 16$. In addition, there is a unique maximum candidate Gram matrix of size $15$, as shown in \eqref{icosid}. Analogous equations to \eqref{KEYEQ} imply that $4\alpha^2-2\alpha-1=0$, $\beta=1/2-\alpha$, $\gamma=1/2$. This yields two equivalent, positive semidefinite solutions, both corresponding to the main diagonals of the icosidodecahedron.
\end{proof}
\begin{remark}
It turns out, that the icosidodecahedron is the largest $5$-angular configuration in $\mathbb{R}^3$ containing orthogonal lines. The search is completely analogous to what is described in Theorem~\ref{IDOD} and its proof.
\end{remark}
We refer the reader to \cite{SLOANE2} for further interesting arrangements in $\mathbb{R}^3$.
\subsection{Higher dimensional examples}
In this section we report on our computational results on triangular line systems, where one of the three possible inner products is $0$. On the plane, the unique maximum configuration is formed by the main diagonals of the convex regular $12$-gon, and in dimension $3$ it is once again the main diagonals of the dodecahedron. Both of these results can be concluded from inspecting the matrices what we generated for the proof of Theorem~\ref{MULTHM} (see Table~\ref{TABMULT1}).
\begin{theorem}
The maximum cardinality of a triangular line system containing orthogonal lines in $\mathbb{R}^4$, is $24$. There is a unique configuration spanned by
\begin{multline*}\label{zzz}
\mathcal{X}=\{[1,\pm1,\pm1,\pm1]/2\}\cup\{[1,0,0,0],[0,1,0,0],[0,0,1,0],[0,0,0,1]\}\\
{}\cup\{x\colon \text{$x$ is a permutation of } [\pm1,\pm1,0,0]/\sqrt2; \left\langle x,[4,3,2,1]\right\rangle>0\}%\colon\text{ first nonzero entry is }>0\}
\end{multline*}
which describes the main diagonals of the $24$-cell, and its dual.
\end{theorem}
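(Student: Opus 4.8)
The plan is to split the statement into a lower bound, namely that the displayed set $\mathcal{X}$ really is a triangular line system containing orthogonal lines of cardinality $24$, and an exhaustive upper-bound-plus-uniqueness argument carried out through the computational framework of Roadmap~\ref{ROADMAP} with $d=4$.

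First I would verify the construction directly. Counting the three families gives $8+4+12=24$ unit vectors spanning pairwise distinct lines. The inner products are found by a short case analysis organized by the overlap of supports: within the family $\{[1,\pm1,\pm1,\pm1]/2\}$ one gets $(1+k)/4$ with $k\in\{-3,-1,1\}$, i.e.\ values in $\{0,\pm1/2\}$; the basis vectors are mutually orthogonal; the normalized permutations of $[\pm1,\pm1,0,0]/\sqrt2$ pair up to $\{0,\pm1/2\}$; and each cross-pairing between the $\{0,\pm1\}$-supported and the $\pm1/2$-valued vectors produces $(u_i\epsilon_i+u_j\epsilon_j)/\sqrt2\in\{0,\pm1/\sqrt2\}$. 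Hence $A(\mathcal{X})\subseteq\{0,\pm1/2,\pm1/\sqrt2\}$, so the squared inner products take exactly the three values $0,1/4,1/2$; in particular $-1\notin A(\mathcal{X})$, the system is triangular, and it contains orthogonal lines. Finally, the families $\{e_1,\dots,e_4\}\cup\{[1,\pm1,\pm1,\pm1]/2\}$ and the chosen permutations of $[\pm1,\pm1,0,0]/\sqrt2$ are precisely one representative per antipodal vertex pair of the $24$-cell and of its dual, which yields both the geometric description and the bound $|\mathcal{X}|\geq24$.

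For the upper bound and uniqueness I would run Roadmap~\ref{ROADMAP} with $d=4$ in the \emph{triangular-with-orthogonal} regime: candidate Gram matrices with off-diagonal symbols $\{0,\pm\alpha,\pm\beta\}$ in which the entry $0$ genuinely occurs, so the squared-angle set is $\{0,\alpha^2,\beta^2\}$. The crucial point making this feasible is that, although we now also allow the zero angle, the problem still carries only the two free parameters $\alpha,\beta$; consequently the Strong and Weak Gr\"obner tests (Proposition~\ref{PKEY}, Corollary~\ref{WGB}) and the Combinatorial test (Lemma~\ref{COMBTEST}) apply verbatim, and the genericity relation in \eqref{KEYEQ} is left unchanged, as it already encodes $\alpha,\beta\neq0$, $\alpha^2\neq\beta^2$, and $\alpha^2,\beta^2\neq1$. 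I would generate all canonical candidate Gram matrices of small order, begin the Gr\"obner-based pruning at order $d+2$ (as in the proof of Theorem~\ref{IDOD}), then augment row-by-row and prune until the surviving count drops to $0$; the last nonempty level fixes the maximum cardinality, and I expect exactly one surviving candidate Gram matrix of order $24$. Solving the analogue of \eqref{KEYEQ} for that matrix should force $\alpha^2=1/4$ and $\beta^2=1/2$, and selecting the positive semidefinite specialization yields a single Gram matrix, isometric to $\mathcal{X}$. Note that any triangular configuration of more than $12$ lines must use all three symbols, since the biangular maximum in $\mathbb{R}^4$ is only $12$ by Theorem~\ref{THMDIM4}.

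The main obstacle is computational rather than conceptual. As in the biangular classifications in $\mathbb{R}^5$ and $\mathbb{R}^6$, the number of intermediate canonical candidate Gram matrices is expected to be very large, so the exhaustive generation relies on the equivalence-free canonization and aggressive Gr\"obner-based pruning of Section~\ref{sec3} together with substantial parallel computation. A secondary, but routine, difficulty is confirming that the unique surviving order-$24$ candidate Gram matrix is genuinely isometric to the $24$-cell-plus-dual configuration, rather than merely sharing its parameters; this is settled by exhibiting the explicit vector realization above and matching the two Gram matrices up to a signed permutation and a relabeling of $\{\alpha,\beta\}$.
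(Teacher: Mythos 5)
Your proposal is correct and takes essentially the same route as the paper: its proof likewise runs Roadmap~\ref{ROADMAP} with $d=4$ over the symbol set $\{0,\pm\alpha,\pm\beta\}$, prunes until $|\mathcal{C}_4(25)|=0$ (so the maximum is $24$), and solves \eqref{KEYEQ} for the unique surviving order-$24$ candidate Gram matrix (recovered from $\mathcal{X}$) to obtain the inner product set $\{0,\pm1/2,\pm1/\sqrt2\}$. Your extra touches---the explicit verification that $\mathcal{X}$ realizes the $24$-cell plus its dual, and the remark via Theorem~\ref{THMDIM4} that any configuration of more than $12$ lines must use all three symbols---are harmless refinements of the same argument.
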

\begin{table}[htbp]%
	\centering
	\caption{$\{0,\pm \alpha,\pm \beta\}$ candidate Gram matrices in $\mathbb{R}^4$}\label{TABMULT3}
	\begin{tabular}{lr|lr|lr|lr|lr}
		\toprule
		$n$ & $|\mathcal{C}_4(n)|$ & $n$ & $|\mathcal{C}_4(n)|$ & $n$ & $|\mathcal{C}_4(n)|$ & $n$ & $|\mathcal{C}_4(n)|$ & $n$ & $|\mathcal{C}_4(n)|$\\
		\midrule
		1 &    1 &  6 & 8353 & 11 & 2694 & 16 & 892 & 21 & 10\\ 
		2 &    2 &  7 & 2746 & 12 & 2919 & 17 & 447 & 22 &  4\\
		3 &    6 &  8 & 1725 & 13 & 2638 & 18 & 214 & 23 &  1\\
	    4 &   51 &  9 & 1776 & 14 & 2147 & 19 &  80 & 24 &  1\\
		5 & 1152 & 10 & 2314 & 15 & 1453 & 20 &  34 & 25 &  0\\
		\bottomrule
	\end{tabular}
\end{table}
\begin{proof}
The proof follows analogously to Roadmap~\ref{ROADMAP} with $d=4$. In Table~\ref{TABMULT3} we display the number of surviving candidate Gram matrices with symbols $\{0,\pm\alpha,\pm\beta\}$, that is, the numbers $|\mathcal{C}_4(n)|$ for $n\in\{2,\dots,25\}$. Since $|\mathcal{C}_4(25)|=0$, it follows that $|\mathcal{C}_4(n)|=0$ for every $n\geq 25$. The unique largest candidate Gram matrix corresponding to this case can be easily recovered from $\mathcal{X}$, and then solving \eqref{KEYEQ} yields two equivalent solutions with set of inner products $\{0,\pm1/2,\pm 1/\sqrt2\}$.
\end{proof}
\begin{remark}
In $\mathbb{R}^4$, the second largest inextendible configuration has cardinality $16$, spanned by all permutations of $[\pm1,\pm1,\pm1,0]/\sqrt3$ where the first nonzero entry is positive. The set of inner products of this configuration is $\{0,\pm1/3,\pm2/3\}$.
\end{remark}
\begin{theorem}
The maximum cardinality of a triangular line system containing orthogonal lines in $\mathbb{R}^5$ is $40$. This unique configuration is spanned by the set $\mathcal{X}$ of all permutations of $[\pm1,\pm1,\pm1,0,0]/\sqrt3$ where the first nonzero entry is positive.
\end{theorem}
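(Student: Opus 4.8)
The plan is to follow Roadmap~\ref{ROADMAP} with $d=5$ in its multiangular form, using canonical candidate Gram matrices with off-diagonal entries from $\{0,\pm\alpha,\pm\beta\}$ but dropping the biangular restriction that at most two of these symbol groups appear; permitting all three of $0,\pm\alpha,\pm\beta$ is exactly what produces a triangular system (three squared inner products $0,\alpha^2,\beta^2$) that contains orthogonal lines. Before the search, I would establish the lower bound by verifying that the displayed $\mathcal{X}$ is admissible: each $x\in\mathcal{X}$ has exactly three nonzero coordinates equal to $\pm1/\sqrt3$, so fixing the first nonzero entry positive gives $|\mathcal{X}|=\binom{5}{3}\cdot2^{2}=40$, and for distinct $x,x'$ with supports $S,S'$ the inner product $\langle x,x'\rangle=\tfrac13\sum_{i\in S\cap S'}s_is_i'$ depends only on the overlap $k:=|S\cap S'|\in\{1,2,3\}$ and the relative signs. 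A short case analysis over $k$ yields $\langle x,x'\rangle\in\{0,\pm1/3,\pm2/3\}$, hence squared inner products $\{0,1/9,4/9\}$, so $\mathcal{X}$ spans a triangular line system containing orthogonal lines --- the support-$3$ analogue of the $D_d$-construction of Lemma~\ref{L1}.

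For the matching upper bound and uniqueness I would run the exhaustive, equivalence-free generation exactly as in the proofs of Theorems~\ref{THMDIM5} and~\ref{THMDIM6}: generate all canonical $\{0,\pm\alpha,\pm\beta\}$ matrices of order $d+1=6$, augment them one row at a time, and at each step prune using the combinatorial test of Lemma~\ref{COMBTEST} together with the Gr\"obner tests of Proposition~\ref{PKEY} and Corollary~\ref{WGB} applied to the multiangular analogue of \eqref{KEYEQ}. Recording the counts $|\mathcal{C}_5(n)|$, I expect the list to terminate with $|\mathcal{C}_5(41)|=0$, leaving a single surviving candidate Gram matrix of order $40$; solving \eqref{KEYEQ} for it should fix the inner products to $\{0,\pm1/3,\pm2/3\}$ and, after discarding the non-positive-semidefinite solutions, return precisely the Gram matrix of $\mathcal{X}$ up to the switching and isometry operations, which yields uniqueness.

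The main obstacle is computational rather than conceptual: with three symbol groups in five dimensions the number of canonical candidate Gram matrices near $n\approx20$ is vastly larger than in the biangular tables, so the search demands careful orderly generation, efficient canonization (via \eqref{CANFORM} or the graph encoding of Appendix~\ref{APPENDIX1}), and substantial cluster time, while the Gr\"obner computation at each augmentation must remain tractable in the three variables $\alpha,\beta,\omega$. The delicate points are guaranteeing that the generation is genuinely exhaustive up to equivalence and that the final polynomial system is solved completely, so that no alternative positive-semidefinite Gram matrix of order $40$ is overlooked.
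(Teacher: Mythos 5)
Your proposal follows essentially the same route as the paper: the multiangular version of Roadmap~\ref{ROADMAP} with $d=5$ over the symbol set $\{0,\pm\alpha,\pm\beta\}$ (all three groups permitted), exhaustive orderly generation with pruning via Lemma~\ref{COMBTEST}, Proposition~\ref{PKEY} and Corollary~\ref{WGB}, termination at $|\mathcal{C}_5(41)|=0$, and recovery of the unique order-$40$ Gram matrix with inner products $\{0,\pm1/3,\pm2/3\}$ by solving \eqref{KEYEQ}. Your explicit verification of the lower bound from $\mathcal{X}$ (the count $\binom{5}{3}\cdot 2^2=40$ and the overlap case analysis) is a correct and welcome addition that the paper leaves implicit.
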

\begin{table}[htbp]%
	\centering
	\caption{$\{0,\pm \alpha,\pm \beta\}$ candidate Gram matrices in $\mathbb{R}^5$}\label{TABMUL4}\scriptsize
	\begin{tabular}{lr|lr|lr|lr|lr}
		\toprule
		$n$ & $|\mathcal{C}_5(n)|$ & $n$ & $|\mathcal{C}_5(n)|$ & $n$ & $|\mathcal{C}_5(n)|$ & $n$ & $|\mathcal{C}_5(n)|$ & $n$ & $|\mathcal{C}_5(n)|$\\
		\midrule
		 7 & 1045395 & 14 & 12214161 & 21 & 68512201 & 28 & 2932142 & 35 & 471  \\
		 8 &  370512 & 15 & 21063583 & 22 & 59177264 & 29 & 1217479 & 36 &  94\\
	   	 9 &  441556 & 16 & 32845898 & 23 & 46323247 & 30 &  449091 & 37 &  18 \\
		10 &  724198 & 17 & 46331977 & 24 & 32824635 & 31 &  146385 & 38 &  4 \\
		11 & 1422041 & 18 & 59180410 & 25 & 21019703 & 32 &   41984 & 39 &  1 \\
		12 & 3076847 & 19 & 68513149 & 26 & 12137301 & 33 &   10565 & 40 &  1 \\
		13 & 6412829 & 20 & 71935169 & 27 &  6301866 & 34 &    2357 & 41 &  0 \\
		\bottomrule
	\end{tabular}
\end{table}\normalsize
\begin{proof}
The proof follows analogously to Roadmap~\ref{ROADMAP} with $d=5$. In Table~\ref{TABMUL4} we display the number of surviving candidate Gram matrices with symbols $\{0,\pm\alpha,\pm\beta\}$, that is, the numbers $|\mathcal{C}_5(n)|$ for $n\in\{7,\dots,41\}$. Since $|\mathcal{C}_5(41)|=0$, it follows that $|\mathcal{C}_5(n)|=0$ for every $n\geq 41$. The unique largest candidate Gram matrix corresponding to this case can be easily recovered from $\mathcal{X}$, and then solving \eqref{KEYEQ} yields a unique solution with set of inner products $\{0,\pm1/3,\pm 2/3\}$.
\end{proof}
%%%%%%%%%%%%%%%%%
% OPEN PROBLEMS %
%%%%%%%%%%%%%%%%%
\section{Open problems}\label{sec6}
We conclude this paper with the following set of problems.
\begin{problem}[Superquadratic lines, see \cite{SUD}]\label{PROB1}
Let $c,\varepsilon>0$ be fixed. Find a construction of a series of biangular lines $\mathcal{X}_d\subset\mathbb{R}^d$, such that $|\mathcal{X}_d|\geq c\cdot d^{2+\varepsilon}$ holds for infinitely many $d\geq 1$.
\end{problem}
In particular, investigate if Proposition~\ref{P2} can be applied to a suitable series of spherical $3$-distance sets.
\begin{problem}
Find a series of spherical $3$-distance sets $\mathcal{X}_d\subset\mathbb{R}^d$ with $A(\mathcal{X}_d)\subseteq\{\alpha_d,\beta_d,\gamma_d\}$ such that $\alpha_d+\beta_d<0$ and $|\mathcal{X}|$ is superquadratic (in the sense of Problem~\ref{PROB1}).
\end{problem}
\begin{problem}[See \cite{BTF}]
Find a series of biangular tight frames $\mathcal{X}_d\subset\mathbb{R}^d$ such that $|\mathcal{X}_d|>d^2$ for infinitely many $d\geq 1$.
\end{problem}
It is known that the twisted icosahedron \cite{BTF} forms an infinite family of $6$ biangular lines in $\mathbb{R}^3$, which is one line larger compared to what Proposition~\ref{PINF} guarantees.
\begin{problem}[Cf.~Corollary~\ref{CABOVE}]
Determine if there exists an infinite family of biangular lines $\mathcal{X}(h)\subset\mathbb{R}^d$ such that $|\mathcal{X}(h)|$ is larger than the one described in Proposition~\ref{PINF} for some $d>6$.
\end{problem}
\begin{problem}[See \cite{LS}, cf.~Example~\ref{EX3}]
Determine if there exist an infinite family of $28$ biangular lines $\mathcal{X}(h)\subset\mathbb{R}^7$ such that $\mathcal{X}(0)$ spans equiangular lines.
\end{problem}
It would be also very interesting to see whether binary codes with four distinct distances lead to improved constructions in $\mathbb{R}^d$ for some $d\leq 23$ or possibly beyond.
\begin{problem}[See Lemma~\ref{LCODE}]
For $d\geq 1$ determine the maximum cardinality of binary codes of length $d$ admitting at most four distinct Hamming distances $\{\Delta_1,\Delta_2,d-\Delta_1,d-\Delta_2\}$, $\Delta_1,\Delta_2\in\{1,\dots,d-1\}$.
\end{problem}
\begin{problem}[Cf.~Theorem~\ref{RELBDS}, Remark~\ref{RELBDRM}]
Determine if there exists a set $\mathcal{X}\subset\mathbb{R}^d$ spanning biangular lines with $A(\mathcal{X})\subseteq\{\pm\alpha,\pm\beta\}$, such that $\alpha^2+\beta^2=6/(d+4)$, and there is equality in \eqref{RELBD} for some $d>1$.
\end{problem}
\section*{Acknowledgements}
We thank Prof.~Patric~\"Osterg\aa rd for providing us (essentially unlimited) access to supercomputing resources at Aalto University.

\appendix
\section{Graph representation of candidate Gram matrices}\label{APPENDIX1}
Let $m\geq 1$, and $n\geq 2$ be integers, and consider an $n\times n$ symmetric matrix $C(\alpha_1,\dots,\alpha_m)$ with constant diagonal entries $1$ over the polynomial ring $\mathbb{Q}[\alpha_1,\dots,\alpha_m]$ with off-diagonal entries $\{0,\pm\alpha_1,\dots$, $\pm\alpha_m\}$. Analogously as set forth earlier in Definition~\ref{MAINDEF}, two such matrices $C_1$ and $C_2$ are called equivalent, if
\begin{equation*}
C_1(\alpha_1,\dots,\alpha_m)=PC_2(\sigma(\alpha_1),\dots,\sigma(\alpha_m))P^T
\end{equation*}
for some signed permutation matrix $P$ and relabeling $\sigma$. A representative of this matrix equivalence class is called a candidate Gram matrix.

The goal of this section is to construct for every matrix $C(\alpha_1,\dots,\alpha_m)$ of order $n$ a (colored) graph $X(C(\alpha_1,\dots,$ $\alpha_m))$ capturing its underlying symmetries and in particular its equivalence class. With this representation, equivalence of matrices $C_1$ and $C_2$ (over the same symbol set) simply boils down to the isomorphism of the corresponding colored graphs $X(C_1)$ and $X(C_2)$. This latter task can be readily decided by the `nauty' software \cite{MCK} in practice.

Our graph $X(C)$ has $2n^2+n+2m$ vertices, and its vertex set $V(X(C))$ is partitioned by the following four distinct (nonempty) color classes:
\begin{equation*}
V(X(C)):=\mathcal{U}\cup\mathcal{V}\cup\mathcal{W}\cup\mathcal{Z}.
\end{equation*}
Here $\mathcal{U}:=\{u_i\colon i\in\{1,\dots,n\}\}$ conceptually represents the $n$ lines (in other words, the $n$ rows/columns of the matrix $C$). The set $\mathcal{V}:=\{v_{ik}\colon i\in\{1,\dots,n\};k\in\{1,2\}\}$ represents the set of antipodal unit vectors (say $\pm x$) spanning the lines. The set $\mathcal{W}:=\{w_{ijk}\colon i<j\in\{1,\dots,n\};k\in\{1,\dots,4\}\}$ represents the four possible inner products $\left\langle \pm x,\pm x'\right\rangle$ (where $\pm x$ and $\pm x'$ are the spanning unit vectors of distinct lines), and finally $\mathcal{Z}=\{z_{ik}\colon i\in\{1,\dots,m\};k\in\{1,2\}\}$ represents the $2m$ off-diagonal entries (where for every $i\in\{1,\dots,m\}$, the vertices $z_{i1}$ and $z_{i2}$ correspond to the same symbol $\alpha_i$ and its negative, in some order).

The edge set, $E(X(C))$ is the following:
\begin{equation*}
E(X(C)):=\mathcal{E}_1\cup\mathcal{E}_2\cup\mathcal{E}_3\cup\mathcal{E}_4\cup\mathcal{E}_5.
\end{equation*}
Here $\mathcal{E}_1:=\{\{u_i,v_{ik}\}\colon i\in\{1,\dots,n\};k\in\{1,2\}\}$ and $\mathcal{E}_2:=\{\{z_{i1},z_{i2}\}\colon i\in\{1,\dots,m\}\}$ describe the edges connecting the elements of $\mathcal{U}$ and $\mathcal{V}$, and the edges within $\mathcal{Z}$, respectively. Furthermore,
\begin{multline*}
\mathcal{E}_3:=\{\{v_{i1},w_{ij1}\},\{v_{i2},w_{ij1}\},\{w_{ij1},w_{ij2}\},\{w_{ij2},w_{ij3}\},\{w_{ij3},w_{ij4}\},\\
\{v_{j1},w_{ij4}\},\{v_{j2},w_{ij4}\}\colon i<j\in\{1,\dots,n\};G_{ij}=0\}
\end{multline*}
and
\begin{multline*}
\mathcal{E}_4:=\{\{v_{ik},w_{ijk}\},\{v_{jk},w_{ijk}\},\{v_{ik},w_{ij(k+2)}\},\{v_{j(3-k)},w_{ij(k+2)}\}\colon\\
i<j\in\{1,\dots,n\};k\in\{1,2\};G_{ij}\neq 0\}
\end{multline*}
describe the graph structure between (vertices representing) orthogonal and non-orthogonal lines, respectively. Finally,
\begin{multline*}
\mathcal{E}_5:=\{\{w_{ijk},z_{\ell1}\},\{w_{ij(k+2)},z_{\ell2}\}\colon i<j\in\{1,\dots,n\};\\
k\in\{1,2\};G_{ij}=\alpha_{\ell}\}\cup\{\{w_{ijk},z_{\ell2}\},\{w_{ij(k+2)},z_{\ell1}\}\colon\\ i<j\in\{1,\dots,n\};k\in\{1,2\};G_{ij}=-\alpha_{\ell}\}
\end{multline*}
describes the edges connecting the vertices between $\mathcal{W}$ and $\mathcal{Z}$, thus providing a correspondence between lines with certain inner products, and the symbols representing these inner products.

The following is a technical statement clarifying the usefulness of such a representation.
\begin{proposition}
The matrices $C_1$ and $C_2$ \textup{(}over the same symbol set\textup{)} are equivalent, if and only if $X(C_1)$ and $X(C_2)$ are isomorphic as graphs. Furthermore, the automorphism groups of $C_1$ and $X(C_1)$ are isomorphic as groups.
\end{proposition}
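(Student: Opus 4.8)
The plan is to exhibit an explicit, mutually inverse correspondence between equivalences of matrices and isomorphisms of the associated colored graphs, and then read off the automorphism-group statement by specializing $C_2=C_1$. An equivalence between $C_1$ and $C_2$ is recorded by a triple $(\pi,\varepsilon,\sigma)$, where a permutation $\pi$ of $\{1,\dots,n\}$ and signs $\varepsilon\in\{\pm1\}^n$ together encode the signed permutation matrix $P$, and $\sigma$ is a relabeling of the symbol set. I would first treat the forward direction: given such a triple realizing $C_1=PC_2(\sigma)P^T$, I define a vertex map $\phi\colon X(C_2)\to X(C_1)$ by $u_i\mapsto u_{\pi(i)}$; on $\mathcal{V}$ by sending the pair $\{v_{i1},v_{i2}\}$ to $\{v_{\pi(i)1},v_{\pi(i)2}\}$, transposing the two vertices exactly when $\varepsilon_i=-1$; on $\mathcal{Z}$ by the action of $\sigma$ on the symbol pairs $\{z_{\ell1},z_{\ell2}\}$; and on $\mathcal{W}$ by the induced action on the four signed inner products attached to each pair of lines. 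The task is then the routine verification that $\phi$ carries each edge class $\mathcal{E}_1,\dots,\mathcal{E}_5$ of $X(C_2)$ onto the corresponding class of $X(C_1)$; since $\phi$ respects the color classes by construction, this shows $\phi$ is an isomorphism.

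For the converse I would start from an arbitrary isomorphism $\phi\colon X(C_1)\to X(C_2)$ and recover the triple $(\pi,\varepsilon,\sigma)$. Because the vertex-coloring is part of the data, $\phi$ preserves the classes and restricts to bijections $\mathcal{U}\to\mathcal{U}$, $\mathcal{V}\to\mathcal{V}$, $\mathcal{W}\to\mathcal{W}$, $\mathcal{Z}\to\mathcal{Z}$. The restriction to $\mathcal{U}$ yields the permutation $\pi$; the edges $\mathcal{E}_1$, which attach $u_i$ precisely to $v_{i1},v_{i2}$, then force $\phi$ to send $\{v_{i1},v_{i2}\}$ to $\{v_{\pi(i)1},v_{\pi(i)2}\}$, and the choice of whether this pair is preserved or transposed defines $\varepsilon_i$; the edges $\mathcal{E}_2$ similarly force $\phi$ to permute the symbol pairs $\{z_{\ell1},z_{\ell2}\}$, defining the relabeling $\sigma$ (and the ordering within each pair). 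It then remains to check that the images of the $\mathcal{W}$-vertices, together with the edge classes $\mathcal{E}_3,\mathcal{E}_4,\mathcal{E}_5$, are consistent only with the identity $C_1=PC_2(\sigma)P^T$, where $P$ is built from $(\pi,\varepsilon)$.

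The crux of the argument, and the step I expect to be most delicate, is establishing that each block $\{w_{ij1},\dots,w_{ij4}\}$ is \emph{rigid}: once the images of the incident $\mathcal{V}$- and $\mathcal{Z}$-vertices are fixed, the action of $\phi$ on the block is completely determined, with no spurious freedom. This is exactly what the asymmetric edge patterns are engineered to guarantee. In the orthogonal case $G_{ij}=0$ the vertices $w_{ij1},w_{ij2},w_{ij3},w_{ij4}$ form an induced path in $\mathcal{E}_3$ whose two ends attach respectively to $\{v_{i1},v_{i2}\}$ and $\{v_{j1},v_{j2}\}$, so its order cannot be scrambled; in the non-orthogonal case the $\mathcal{V}$-neighborhoods in $\mathcal{E}_4$ separate the ``matched'' pair $\{w_{ij1},w_{ij2}\}$ (recording $+G_{ij}$) from the ``crossed'' pair $\{w_{ij3},w_{ij4}\}$ (recording $-G_{ij}$), while the edges $\mathcal{E}_5$ pin down which symbol $\pm\alpha_\ell$ is attached. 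I would carry out this verification by a short case analysis over $G_{ij}\in\{0,\alpha_\ell,-\alpha_\ell\}$, checking in each case that the adjacency constraints admit no automorphism of the block beyond those induced by $\varepsilon_i,\varepsilon_j$ and $\sigma$; here negating a vector $x_i$ interchanges matched and crossed pairs, consistently with $\langle-x_i,x_j\rangle=-\langle x_i,x_j\rangle$. Combining this rigidity with the recovered data $(\pi,\varepsilon,\sigma)$ forces the matrix identity, completing the converse.

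Finally, the two directions show that ``build the triple'' and ``build the isomorphism'' are mutually inverse bijections between the equivalences $C_1\to C_2$ and the isomorphisms $X(C_1)\to X(C_2)$. Specializing to $C_2=C_1$, the equivalences $C_1\to C_1$ are by definition the automorphisms of $C_1$, namely the stabilizer of $C_1$ under the action of signed permutations and relabelings, and these correspond bijectively to $\mathrm{Aut}(X(C_1))$. Since the correspondence is equivariant, in the sense that the isomorphism attached to a composite of triples is the composite of the attached isomorphisms, this bijection is a group isomorphism, which yields the ``furthermore'' clause.
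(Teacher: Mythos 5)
Your proposal is correct, but note that the paper itself gives no proof at all for this proposition: it writes ``We omit the proof, and refer the reader to \cite{MCK}.'' So your argument supplies exactly the content that the paper delegates to the canonical-construction literature, and it does so along the standard lines for such gadget encodings. The three pillars you identify are the right ones, and each checks out against the construction in Appendix~\ref{APPENDIX1}: (i) the colors force any isomorphism to respect the partition $\mathcal{U}\cup\mathcal{V}\cup\mathcal{W}\cup\mathcal{Z}$, so $\pi$, $\varepsilon$, $\sigma$ can be read off from the restrictions to $\mathcal{U}$, $\mathcal{V}$ (via the $\mathcal{E}_1$ pendant structure) and $\mathcal{Z}$ (via the $\mathcal{E}_2$ matching); (ii) the $\mathcal{W}$-blocks are rigid --- in the orthogonal case because $w_{ij1},\dots,w_{ij4}$ form an induced path whose endpoints are distinguishable by their $\mathcal{V}$-attachments, and in the non-orthogonal case because the four vertices $w_{ij1},\dots,w_{ij4}$ have pairwise distinct pairs of $\mathcal{V}$-neighbors, so their images are forced once the images of $v_{i1},v_{i2},v_{j1},v_{j2}$ are known (your observation that negating $x_i$ swaps the matched pair $\{w_{ij1},w_{ij2}\}$ with the crossed pair $\{w_{ij3},w_{ij4}\}$, matching $\langle -x_i,x_j\rangle=-\langle x_i,x_j\rangle$, is precisely the needed compatibility); note also that orthogonal and non-orthogonal blocks cannot be confused, since only the former contain $\mathcal{W}$--$\mathcal{W}$ edges and only the latter contain $\mathcal{W}$--$\mathcal{Z}$ edges; and (iii) the functoriality of both constructions, which turns the bijection into a group isomorphism upon setting $C_2=C_1$. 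One point worth making explicit in a written-up version: the automorphism group of $C_1$ must be taken as the group of \emph{pairs} $(P,\sigma)$ stabilizing $C_1$, not merely the set of equalities they realize; this is what makes the correspondence bijective even when some symbol $\alpha_\ell$ does not occur in $C_1$ (on the matrix side the relabeling $\alpha_\ell\mapsto-\alpha_\ell$ acts trivially on $C_1$ yet is a distinct group element, and on the graph side it corresponds to the automorphism swapping the isolated-edge pair $z_{\ell 1}\leftrightarrow z_{\ell 2}$).
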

We omit the proof, and refer the reader to \cite{MCK}. Instead, we show how to represent $\left[\begin{smallmatrix}1 & 0 & \alpha\\ 0 & 1 & \alpha\\ \alpha & \alpha & 1\end{smallmatrix}\right]$ over the symbol set $\{\pm\alpha,\pm\beta\}$ on Figure~\ref{figure1}.
\begin{figure}
	\centering
\begin{tikzpicture}
\node [draw, regular polygon, regular polygon sides=3, inner sep=0pt] (u1) at (-5.000,0) {\tiny$u_{1}$};
\node [draw, regular polygon, regular polygon sides=3, inner sep=0pt] (u2) at (5.000,0) {\tiny$u_{2}$};
\node [draw, regular polygon, regular polygon sides=3, inner sep=0pt] (u3) at (0,-6.180) {\tiny$u_{3}$};
\node [draw, regular polygon, regular polygon sides=4, inner sep=0pt] (v11) at (-4.000,0) {\tiny$v_{11}$};
\node [draw, regular polygon, regular polygon sides=4, inner sep=0pt] (v12) at (-4.500,-0.8660) {\tiny$v_{12}$};
\node [draw, regular polygon, regular polygon sides=4, inner sep=0pt] (v21) at (4.500,-0.8660) {\tiny$v_{21}$};
\node [draw, regular polygon, regular polygon sides=4, inner sep=0pt] (v22) at (4.000,0) {\tiny$v_{22}$};
\node [draw, regular polygon, regular polygon sides=4, inner sep=0pt] (v31) at (-0.5000,-5.314) {\tiny$v_{31}$};
\node [draw, regular polygon, regular polygon sides=4, inner sep=0pt] (v32) at (0.5000,-5.314) {\tiny$v_{32}$};
\node [draw, circle, inner sep=1pt] (w121) at (-1.500,-0.4330) {\tiny$w_{121}$};
\node [draw, circle, inner sep=1pt] (w122) at (-0.5000,-0.4330) {\tiny$w_{122}$};
\node [draw, circle, inner sep=1pt] (w123) at (0.5000,-0.4330) {\tiny$w_{123}$};
\node [draw, circle, inner sep=1pt] (w124) at (1.500,-0.4330) {\tiny$w_{124}$};
\node [draw, circle, inner sep=1pt] (w231) at (4.20721,-2.58615) {\tiny$w_{231}$};
\node [draw, circle, inner sep=1pt] (w232) at (3.25684,-2.27503) {\tiny$w_{232}$};
\node [draw, circle, inner sep=1pt] (w233) at (2.30647,-1.96391) {\tiny$w_{233}$};
\node [draw, circle, inner sep=1pt] (w234) at (1.3561,-1.65279) {\tiny$w_{234}$};
\node [draw, circle, inner sep=1pt] (w131) at (-1.3561,-1.65279) {\tiny$w_{131}$};
\node [draw, circle, inner sep=1pt] (w132) at (-2.30647,-1.96391) {\tiny$w_{132}$};
\node [draw, circle, inner sep=1pt] (w133) at (-3.25684,-2.27503) {\tiny$w_{133}$};
\node [draw, circle, inner sep=1pt] (w134) at (-4.20721,-2.58615) {\tiny$w_{134}$};
\node [draw, diamond, inner sep=1pt] (z11) at (0,-3.3) {\tiny$z_{11}$};
\node [draw, diamond, inner sep=1pt] (z12) at (0,-4.3) {\tiny$z_{12}$};
\node [draw, diamond, inner sep=1pt] (z21) at (0,-1.3) {\tiny$z_{21}$};
\node [draw, diamond, inner sep=1pt] (z22) at (0,-2.3) {\tiny$z_{22}$};
%%%
\draw (u1) -- (v11);
\draw (u1) -- (v12);
\draw (u2) -- (v21);
\draw (u2) -- (v22);
\draw (u3) -- (v31);
\draw (u3) -- (v32);
\draw (v11) -- (w121);
\draw (v12) -- (w121);
\draw (w121) -- (w122);
\draw (w122) -- (w123);
\draw (w123) -- (w124);
\draw (v21) -- (w124);
\draw (v22) -- (w124);
%%%
\draw (v11) -- (w131);
\draw (v11) -- (w133);
\draw (v12) -- (w132);
\draw (v12) -- (w134);
\draw (w131) -- (v31);
\draw (w133) -- (v32);
\draw (w132) -- (v31);
\draw (w134) -- (v32);
%%%
\draw (v21) -- (w231);
\draw (v21) -- (w233);
\draw (v22) -- (w232);
\draw (v22) -- (w234);
\draw (w231) -- (v31);
\draw (w233) -- (v32);
\draw (w232) -- (v31);
\draw (w234) -- (v32);
\draw (z11) -- (z12);
\draw (z21) -- (z22);
\draw (z11) -- (w131);
\draw (z11) -- (w134);
\draw (z12) -- (w132);
\draw (z12) -- (w133);
\draw (z11) -- (w231);
\draw (z11) -- (w234);
\draw (z12) -- (w232);
\draw (z12) -- (w233);
\end{tikzpicture}
	\caption{Graph representation of a candidate Gram matrix}\label{figure1}
\end{figure}
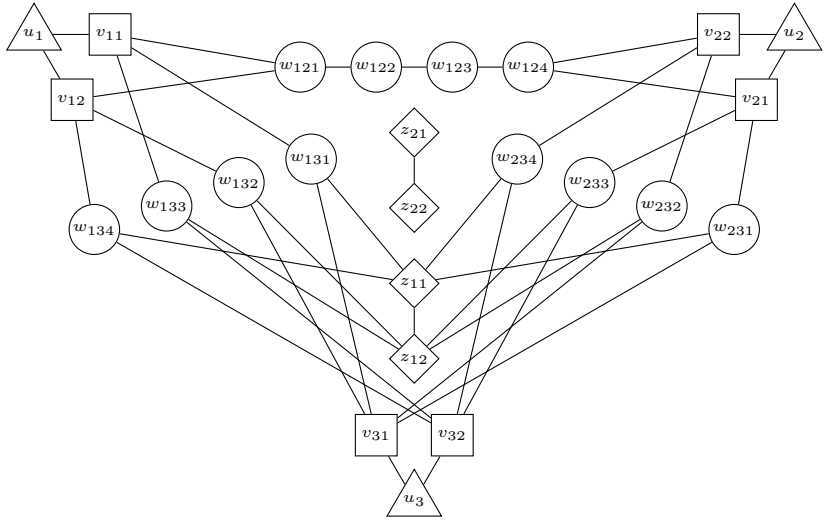
\section{Miscellaneous matrices}\label{APPENDIXB}
We note the largest biangular line systems in $\mathbb{R}^5$ and $\mathbb{R}^6$ containing a pair of lines with irrational inner product between them. It turns out that all of these examples have inner product set $\{\pm\alpha^\ast,\pm\beta^\ast\}$, where $\alpha^\ast:=(3-2\sqrt5)/11$, $\beta^\ast:=(4+\sqrt5)/11$ are the very same values as stated in Theorem~\ref{THMDIM4}. Furthermore, the two examples shown below are extensions of one of the $12$ dimensional maximum cases. Indeed, their upper left $12\times 12$ submatrix agrees with the matrix shown in \eqref{eq111}.
\begin{example}
The largest cardinality of a biangular line system in $\mathbb{R}^5$ with an irrational inner product is $20$. There are $12$ candidate Gram matrices, each corresponding to a single line system. The following candidate Gram matrix (with $\gamma:=-\alpha$, and $\delta:=-\beta$)
\begin{equation*}
C(\alpha,\beta,\gamma,\delta)=\left[\begin{smallmatrix}
 1 & \alpha & \alpha & \alpha & \alpha & \alpha & \delta & \beta & \beta & \beta & \beta & \beta & \alpha & \alpha & \alpha & \alpha & \beta & \beta & \beta & \beta \\
\alpha & 1 & \alpha & \beta & \beta & \alpha & \beta & \delta & \beta & \alpha & \alpha & \beta & \alpha & \alpha & \beta & \beta & \alpha & \alpha & \gamma & \delta \\
\alpha & \alpha & 1 & \alpha & \beta & \beta & \beta & \beta & \delta & \beta & \alpha & \alpha & \alpha & \alpha & \alpha & \delta & \alpha & \delta & \delta & \gamma \\
\alpha & \beta & \alpha & 1 & \alpha & \beta & \beta & \alpha & \beta & \delta & \beta & \alpha & \alpha & \beta & \alpha & \beta & \delta & \gamma & \alpha & \delta \\
\alpha & \beta & \beta & \alpha & 1 & \alpha & \beta & \alpha & \alpha & \beta & \delta & \beta & \alpha & \delta & \beta & \alpha & \gamma & \delta & \alpha & \alpha \\
\alpha & \alpha & \beta & \beta & \alpha & 1 & \beta & \beta & \alpha & \alpha & \beta & \delta & \alpha & \beta & \delta & \alpha & \delta & \alpha & \delta & \alpha \\
\delta & \beta & \beta & \beta & \beta & \beta & 1 & \alpha & \alpha & \alpha & \alpha & \alpha & \alpha & \gamma & \gamma & \gamma & \delta & \delta & \delta & \delta \\
\beta & \delta & \beta & \alpha & \alpha & \beta & \alpha & 1 & \alpha & \beta & \beta & \alpha & \alpha & \gamma & \delta & \delta & \gamma & \gamma & \alpha & \beta \\
\beta & \beta & \delta & \beta & \alpha & \alpha & \alpha & \alpha & 1 & \alpha & \beta & \beta & \alpha & \gamma & \gamma & \beta & \gamma & \beta & \beta & \alpha \\
\beta & \alpha & \beta & \delta & \beta & \alpha & \alpha & \beta & \alpha & 1 & \alpha & \beta & \alpha & \delta & \gamma & \delta & \beta & \alpha & \gamma & \beta \\
\beta & \alpha & \alpha & \beta & \delta & \beta & \alpha & \beta & \beta & \alpha & 1 & \alpha & \alpha & \beta & \delta & \gamma & \alpha & \beta & \gamma & \gamma \\
\beta & \beta & \alpha & \alpha & \beta & \delta & \alpha & \alpha & \beta & \beta & \alpha & 1 & \alpha & \delta & \beta & \gamma & \beta & \gamma & \beta & \gamma \\
\alpha & \alpha & \alpha & \alpha & \alpha & \alpha & \alpha & \alpha & \alpha & \alpha & \alpha & \alpha & 1 & \delta & \delta & \beta & \delta & \delta & \beta & \beta \\
\alpha & \alpha & \alpha & \beta & \delta & \beta & \gamma & \gamma & \gamma & \delta & \beta & \delta & \delta & 1 & \alpha & \alpha & \alpha & \beta & \delta & \delta \\
\alpha & \beta & \alpha & \alpha & \beta & \delta & \gamma & \delta & \gamma & \gamma & \delta & \beta & \delta & \alpha & 1 & \alpha & \beta & \gamma & \alpha & \delta \\
\alpha & \beta & \delta & \beta & \alpha & \alpha & \gamma & \delta & \beta & \delta & \gamma & \gamma & \beta & \alpha & \alpha & 1 & \delta & \alpha & \beta & \alpha \\
\beta & \alpha & \alpha & \delta & \gamma & \delta & \delta & \gamma & \gamma & \beta & \alpha & \beta & \delta & \alpha & \beta & \delta & 1 & \beta & \gamma & \gamma \\
\beta & \alpha & \delta & \gamma & \delta & \alpha & \delta & \gamma & \beta & \alpha & \beta & \gamma & \delta & \beta & \gamma & \alpha & \beta & 1 & \gamma & \alpha \\
\beta & \gamma & \delta & \alpha & \alpha & \delta & \delta & \alpha & \beta & \gamma & \gamma & \beta & \beta & \delta & \alpha & \beta & \gamma & \gamma & 1 & \beta \\
\beta & \delta & \gamma & \delta & \alpha & \alpha & \delta & \beta & \alpha & \beta & \gamma & \gamma & \beta & \delta & \delta & \alpha & \gamma & \alpha & \beta & 1 \\
\end{smallmatrix}\right]
\end{equation*}
yields a Gram matrix $C(\alpha^\ast,\beta^\ast,-\alpha^\ast,-\beta^\ast)$.\qed
\end{example}
\begin{example}
The largest biangular line system in $\mathbb{R}^6$ with irrational inner products is a unique configuration of $24$ lines, corresponding to the candidate Gram matrix (where $\gamma:=-\alpha$, and $\delta:=-\beta$):
\begin{equation*}
C(\alpha,\beta,\gamma,\delta)=\left[\begin{smallmatrix}
1 & \alpha & \alpha & \alpha & \alpha & \alpha & \delta & \beta & \beta & \beta & \beta & \beta & \alpha & \alpha & \alpha & \alpha & \alpha & \alpha & \alpha & \alpha & \alpha & \alpha & \alpha & \alpha \\
\alpha & 1 & \alpha & \beta & \beta & \alpha & \beta & \delta & \beta & \alpha & \alpha & \beta & \alpha & \alpha & \alpha & \alpha & \alpha & \alpha & \beta & \beta & \beta & \beta & \delta & \delta \\
\alpha & \alpha & 1 & \alpha & \beta & \beta & \beta & \beta & \delta & \beta & \alpha & \alpha & \alpha & \alpha & \alpha & \alpha & \beta & \beta & \alpha & \alpha & \delta & \delta & \beta & \beta \\
\alpha & \beta & \alpha & 1 & \alpha & \beta & \beta & \alpha & \beta & \delta & \beta & \alpha & \alpha & \alpha & \beta & \beta & \delta & \delta & \alpha & \alpha & \beta & \beta & \alpha & \alpha \\
\alpha & \beta & \beta & \alpha & 1 & \alpha & \beta & \alpha & \alpha & \beta & \delta & \beta & \alpha & \alpha & \delta & \delta & \beta & \beta & \beta & \beta & \alpha & \alpha & \alpha & \alpha \\
\alpha & \alpha & \beta & \beta & \alpha & 1 & \beta & \beta & \alpha & \alpha & \beta & \delta & \alpha & \alpha & \beta & \beta & \alpha & \alpha & \delta & \delta & \alpha & \alpha & \beta & \beta \\
\delta & \beta & \beta & \beta & \beta & \beta & 1 & \alpha & \alpha & \alpha & \alpha & \alpha & \alpha & \alpha & \gamma & \gamma & \gamma & \gamma & \gamma & \gamma & \gamma & \gamma & \gamma & \gamma \\
\beta & \delta & \beta & \alpha & \alpha & \beta & \alpha & 1 & \alpha & \beta & \beta & \alpha & \alpha & \alpha & \gamma & \gamma & \gamma & \gamma & \delta & \delta & \delta & \delta & \beta & \beta \\
\beta & \beta & \delta & \beta & \alpha & \alpha & \alpha & \alpha & 1 & \alpha & \beta & \beta & \alpha & \alpha & \gamma & \gamma & \delta & \delta & \gamma & \gamma & \beta & \beta & \delta & \delta \\
\beta & \alpha & \beta & \delta & \beta & \alpha & \alpha & \beta & \alpha & 1 & \alpha & \beta & \alpha & \alpha & \delta & \delta & \beta & \beta & \gamma & \gamma & \delta & \delta & \gamma & \gamma \\
\beta & \alpha & \alpha & \beta & \delta & \beta & \alpha & \beta & \beta & \alpha & 1 & \alpha & \alpha & \alpha & \beta & \beta & \delta & \delta & \delta & \delta & \gamma & \gamma & \gamma & \gamma \\
\beta & \beta & \alpha & \alpha & \beta & \delta & \alpha & \alpha & \beta & \beta & \alpha & 1 & \alpha & \alpha & \delta & \delta & \gamma & \gamma & \beta & \beta & \gamma & \gamma & \delta & \delta \\
\alpha & \alpha & \alpha & \alpha & \alpha & \alpha & \alpha & \alpha & \alpha & \alpha & \alpha & \alpha & 1 & \alpha & \alpha & \beta & \alpha & \beta & \alpha & \beta & \alpha & \beta & \alpha & \beta \\
\alpha & \alpha & \alpha & \alpha & \alpha & \alpha & \alpha & \alpha & \alpha & \alpha & \alpha & \alpha & \alpha & 1 & \beta & \alpha & \beta & \alpha & \beta & \alpha & \beta & \alpha & \beta & \alpha \\
\alpha & \alpha & \alpha & \beta & \delta & \beta & \gamma & \gamma & \gamma & \delta & \beta & \delta & \alpha & \beta & 1 & \beta & \alpha & \delta & \alpha & \delta & \beta & \gamma & \beta & \gamma \\
\alpha & \alpha & \alpha & \beta & \delta & \beta & \gamma & \gamma & \gamma & \delta & \beta & \delta & \beta & \alpha & \beta & 1 & \delta & \alpha & \delta & \alpha & \gamma & \beta & \gamma & \beta \\
\alpha & \alpha & \beta & \delta & \beta & \alpha & \gamma & \gamma & \delta & \beta & \delta & \gamma & \alpha & \beta & \alpha & \delta & 1 & \beta & \beta & \gamma & \alpha & \delta & \beta & \gamma \\
\alpha & \alpha & \beta & \delta & \beta & \alpha & \gamma & \gamma & \delta & \beta & \delta & \gamma & \beta & \alpha & \delta & \alpha & \beta & 1 & \gamma & \beta & \delta & \alpha & \gamma & \beta \\
\alpha & \beta & \alpha & \alpha & \beta & \delta & \gamma & \delta & \gamma & \gamma & \delta & \beta & \alpha & \beta & \alpha & \delta & \beta & \gamma & 1 & \beta & \beta & \gamma & \alpha & \delta \\
\alpha & \beta & \alpha & \alpha & \beta & \delta & \gamma & \delta & \gamma & \gamma & \delta & \beta & \beta & \alpha & \delta & \alpha & \gamma & \beta & \beta & 1 & \gamma & \beta & \delta & \alpha \\
\alpha & \beta & \delta & \beta & \alpha & \alpha & \gamma & \delta & \beta & \delta & \gamma & \gamma & \alpha & \beta & \beta & \gamma & \alpha & \delta & \beta & \gamma & 1 & \beta & \alpha & \delta \\
\alpha & \beta & \delta & \beta & \alpha & \alpha & \gamma & \delta & \beta & \delta & \gamma & \gamma & \beta & \alpha & \gamma & \beta & \delta & \alpha & \gamma & \beta & \beta & 1 & \delta & \alpha \\
\alpha & \delta & \beta & \alpha & \alpha & \beta & \gamma & \beta & \delta & \gamma & \gamma & \delta & \alpha & \beta & \beta & \gamma & \beta & \gamma & \alpha & \delta & \alpha & \delta & 1 & \beta \\
\alpha & \delta & \beta & \alpha & \alpha & \beta & \gamma & \beta & \delta & \gamma & \gamma & \delta & \beta & \alpha & \gamma & \beta & \gamma & \beta & \delta & \alpha & \delta & \alpha & \beta & 1 \\
\end{smallmatrix}\right].
\end{equation*}
The matrix $C(\alpha^\ast,\beta^\ast,-\alpha^\ast,-\beta^\ast)$ is positive semidefinite of rank $6$.\qed
\end{example}
\begin{example}[The Schl\"afli graph and related structures]\label{EX3}
In $\mathbb{R}^6$, there is a well-known spherical $2$-distance set of cardinality $27$ with set of inner products $\{-1/2,1/4\}$, related to the adjacency matrix of the Schl\"afli graph \cite{L}, \cite{NM}. Let
\begin{equation*}
C(\alpha,\beta,\gamma,\delta)=\left[\begin{smallmatrix}
 1 & \alpha & \alpha & \alpha & \alpha & \beta & \beta & \beta & \beta & \gamma & \gamma & \gamma & \gamma & \gamma & \gamma & \delta & \delta & \delta & \delta & \delta & \delta & \delta & \delta & \delta & \delta & \delta & \delta \\
\alpha & 1 & \alpha & \beta & \beta & \alpha & \alpha & \beta & \beta & \gamma & \gamma & \gamma & \delta & \delta & \delta & \gamma & \gamma & \gamma & \delta & \delta & \delta & \delta & \delta & \delta & \delta & \delta & \delta \\
\alpha & \alpha & 1 & \beta & \beta & \beta & \beta & \alpha & \alpha & \gamma & \gamma & \gamma & \delta & \delta & \delta & \delta & \delta & \delta & \gamma & \gamma & \gamma & \delta & \delta & \delta & \delta & \delta & \delta \\
\alpha & \beta & \beta & 1 & \alpha & \alpha & \beta & \alpha & \beta & \delta & \delta & \delta & \gamma & \gamma & \gamma & \delta & \delta & \delta & \delta & \delta & \delta & \gamma & \gamma & \gamma & \delta & \delta & \delta \\
\alpha & \beta & \beta & \alpha & 1 & \beta & \alpha & \beta & \alpha & \delta & \delta & \delta & \gamma & \gamma & \gamma & \delta & \delta & \delta & \delta & \delta & \delta & \delta & \delta & \delta & \gamma & \gamma & \gamma \\
\beta & \alpha & \beta & \alpha & \beta & 1 & \alpha & \alpha & \beta & \delta & \delta & \delta & \delta & \delta & \delta & \gamma & \gamma & \gamma & \delta & \delta & \delta & \gamma & \gamma & \gamma & \delta & \delta & \delta \\
\beta & \alpha & \beta & \beta & \alpha & \alpha & 1 & \beta & \alpha & \delta & \delta & \delta & \delta & \delta & \delta & \gamma & \gamma & \gamma & \delta & \delta & \delta & \delta & \delta & \delta & \gamma & \gamma & \gamma \\
\beta & \beta & \alpha & \alpha & \beta & \alpha & \beta & 1 & \alpha & \delta & \delta & \delta & \delta & \delta & \delta & \delta & \delta & \delta & \gamma & \gamma & \gamma & \gamma & \gamma & \gamma & \delta & \delta & \delta \\
\beta & \beta & \alpha & \beta & \alpha & \beta & \alpha & \alpha & 1 & \delta & \delta & \delta & \delta & \delta & \delta & \delta & \delta & \delta & \gamma & \gamma & \gamma & \delta & \delta & \delta & \gamma & \gamma & \gamma \\
\gamma & \gamma & \gamma & \delta & \delta & \delta & \delta & \delta & \delta & 1 & \alpha & \alpha & \gamma & \delta & \delta & \gamma & \delta & \delta & \gamma & \delta & \delta & \alpha & \beta & \beta & \alpha & \beta & \beta \\
\gamma & \gamma & \gamma & \delta & \delta & \delta & \delta & \delta & \delta & \alpha & 1 & \alpha & \delta & \gamma & \delta & \delta & \gamma & \delta & \delta & \gamma & \delta & \beta & \alpha & \beta & \beta & \alpha & \beta \\
\gamma & \gamma & \gamma & \delta & \delta & \delta & \delta & \delta & \delta & \alpha & \alpha & 1 & \delta & \delta & \gamma & \delta & \delta & \gamma & \delta & \delta & \gamma & \beta & \beta & \alpha & \beta & \beta & \alpha \\
\gamma & \delta & \delta & \gamma & \gamma & \delta & \delta & \delta & \delta & \gamma & \delta & \delta & 1 & \alpha & \alpha & \alpha & \beta & \beta & \alpha & \beta & \beta & \gamma & \delta & \delta & \gamma & \delta & \delta \\
\gamma & \delta & \delta & \gamma & \gamma & \delta & \delta & \delta & \delta & \delta & \gamma & \delta & \alpha & 1 & \alpha & \beta & \alpha & \beta & \beta & \alpha & \beta & \delta & \gamma & \delta & \delta & \gamma & \delta \\
\gamma & \delta & \delta & \gamma & \gamma & \delta & \delta & \delta & \delta & \delta & \delta & \gamma & \alpha & \alpha & 1 & \beta & \beta & \alpha & \beta & \beta & \alpha & \delta & \delta & \gamma & \delta & \delta & \gamma \\
\delta & \gamma & \delta & \delta & \delta & \gamma & \gamma & \delta & \delta & \gamma & \delta & \delta & \alpha & \beta & \beta & 1 & \alpha & \alpha & \alpha & \beta & \beta & \gamma & \delta & \delta & \gamma & \delta & \delta \\
\delta & \gamma & \delta & \delta & \delta & \gamma & \gamma & \delta & \delta & \delta & \gamma & \delta & \beta & \alpha & \beta & \alpha & 1 & \alpha & \beta & \alpha & \beta & \delta & \gamma & \delta & \delta & \gamma & \delta \\
\delta & \gamma & \delta & \delta & \delta & \gamma & \gamma & \delta & \delta & \delta & \delta & \gamma & \beta & \beta & \alpha & \alpha & \alpha & 1 & \beta & \beta & \alpha & \delta & \delta & \gamma & \delta & \delta & \gamma \\
\delta & \delta & \gamma & \delta & \delta & \delta & \delta & \gamma & \gamma & \gamma & \delta & \delta & \alpha & \beta & \beta & \alpha & \beta & \beta & 1 & \alpha & \alpha & \gamma & \delta & \delta & \gamma & \delta & \delta \\
\delta & \delta & \gamma & \delta & \delta & \delta & \delta & \gamma & \gamma & \delta & \gamma & \delta & \beta & \alpha & \beta & \beta & \alpha & \beta & \alpha & 1 & \alpha & \delta & \gamma & \delta & \delta & \gamma & \delta \\
\delta & \delta & \gamma & \delta & \delta & \delta & \delta & \gamma & \gamma & \delta & \delta & \gamma & \beta & \beta & \alpha & \beta & \beta & \alpha & \alpha & \alpha & 1 & \delta & \delta & \gamma & \delta & \delta & \gamma \\
\delta & \delta & \delta & \gamma & \delta & \gamma & \delta & \gamma & \delta & \alpha & \beta & \beta & \gamma & \delta & \delta & \gamma & \delta & \delta & \gamma & \delta & \delta & 1 & \alpha & \alpha & \alpha & \beta & \beta \\
\delta & \delta & \delta & \gamma & \delta & \gamma & \delta & \gamma & \delta & \beta & \alpha & \beta & \delta & \gamma & \delta & \delta & \gamma & \delta & \delta & \gamma & \delta & \alpha & 1 & \alpha & \beta & \alpha & \beta \\
\delta & \delta & \delta & \gamma & \delta & \gamma & \delta & \gamma & \delta & \beta & \beta & \alpha & \delta & \delta & \gamma & \delta & \delta & \gamma & \delta & \delta & \gamma & \alpha & \alpha & 1 & \beta & \beta & \alpha \\
\delta & \delta & \delta & \delta & \gamma & \delta & \gamma & \delta & \gamma & \alpha & \beta & \beta & \gamma & \delta & \delta & \gamma & \delta & \delta & \gamma & \delta & \delta & \alpha & \beta & \beta & 1 & \alpha & \alpha \\
\delta & \delta & \delta & \delta & \gamma & \delta & \gamma & \delta & \gamma & \beta & \alpha & \beta & \delta & \gamma & \delta & \delta & \gamma & \delta & \delta & \gamma & \delta & \beta & \alpha & \beta & \alpha & 1 & \alpha \\
\delta & \delta & \delta & \delta & \gamma & \delta & \gamma & \delta & \gamma & \beta & \beta & \alpha & \delta & \delta & \gamma & \delta & \delta & \gamma & \delta & \delta & \gamma & \beta & \beta & \alpha & \alpha & \alpha & 1 \\
\end{smallmatrix}\right].
\end{equation*}
Then $C(1,0,0,1)-I_{27}$ is the graph adjacency matrix of the Schl\"afli graph, and $C(1/4,-1/2,-1/2,1/4)$ is a spherical two-distance set spanning biangular lines. Application of Proposition~\ref{PINF} yields an infinite family of $27$ biangular lines in $\mathbb{R}^7$. It turns out that $C(1/4$, $-1/2,1/2,-1/4)$ is an additional, nonisometric example in $\mathbb{R}^6$, coming from a $4$-class association scheme \cite{hanaki}. The antipodal double of this set is a new example of spherical $5$-designs \cite{BAN9}. \qed
\end{example}

\begin{thebibliography}{11}
\bibitem{cocoa}
\textsc{J. Abbott, A.M. Bigatti}: CoCoALib: a C++ library for doing Computations in Commutative Algebra. Available at \url{http://cocoa.dima.unige.it/cocoalib}, ver.~0.99560 (2019)
\bibitem{BACHOC}
\textsc{C. Bachoc, F. Vallentin}: Optimality and uniqueness of the $(4,10,1/6)$ spherical code, {\it J. Combin. Theory Ser. A}, {\bf 116}, 195--204 (2009)
\bibitem{SUD}
\textsc{I. Balla, F. Dr\"axler, P. Keevash, B. Sudakov}: Equiangular lines and spherical codes in Euclidean space, {\it Invent. math.}, {\bf 211}, 179--212 (2018)
\bibitem{BAN9}
\textsc{E. Bannai, E. Bannai}: A survey on spherical designs and algebraic combinatorics on the sphere, {\it European J. Combin}., {\bf 30}, 1392--1425 (2009)
\bibitem{BBS}
\textsc{E. Bannai, E. Bannai, D. Stanton}: An upper bound for the cardinality of an $s$-distance subset in real Euclidean space, II, {\it Combinatorica}, {\bf 3}, 147--152 (1983)
\bibitem{BECK}
\textsc{T. Becker, V. Weispfenning}: Gr\"obner Bases, Springer--Verlag, New York (1993)
\bibitem{Best}
\textsc{D. Best}: Biangular vectors, MSc thesis, University of Lethbridge (2013)
\bibitem{KHA}
\textsc{D. Best, H. Kharaghani, H. Ramp}: Mutually unbiased weighing matrices, {\it Des. Codes Cryptogr.}, {\bf 76}, 237--256 (2015)
\bibitem{BLO}
\textsc{A. Blokhuis}: Few-Distance Sets, CWI Tract 7, CWI, Amsterdam (1984)
\bibitem{BOY}
\textsc{P. Boyvalenkov, K. Delchev}: On maximal antipodal spherical codes with few distances, {\it Elec. Notes Discr. Math.}, {\bf 57}, 85--90 (2017)
\bibitem{BRINK}
\textsc{G. Brinkmann}: Fast generation of cubic graphs, {\it J. Graph Theory}, {\bf 23} 139--149 (1996)
\bibitem{AEB}
\textsc{A.E. Brouwer}: Block designs. In: R. Graham, M. Gr\"otschel, L. Lov\'asz (eds.) Handbook of Combinatorics, vol. I, pp. 693--745. Elsevier, Amsterdam (1995)
\bibitem{IEEECODES}
\textsc{A. E. Brouwer, J. B. Shearer, N. J. A. Sloane, W. D. Smith}: A new table of constant weight codes, {\it IEEE Trans. Inform. Theory} {\bf 36}, 1334--1380 (1990)
\bibitem{COHN}
\textsc{H. Cohn, Y. Jiao, A. Kumar, S. Torquato}: Rigidity of spherical codes, {\it Geom. Topol.}, {\bf 15}, 2235--2273 (2011)
\bibitem{SPLAG}
\textsc{J. Conway, N.J.A. Sloane}: Sphere Packings, Lattices and Groups (third edition). Springer-Verlag New York (1999)
\bibitem{DGS}
\textsc{P. Delsarte, J.M. Goethals, J.J. Seidel}: Bounds on systems of lines and Jacobi polynomials, {\it Philips Res. Repts}, {\bf 30}, 91--105 (1975)
\bibitem{DGS2}
\textsc{P. Delsarte, J.M. Goethals, J.J. Seidel}: Spherical codes and designs, {\it Geometriae Dedicata} {\bf 6}, 363--388 (1977)
\bibitem{EZBOOK}
\textsc{T. Ericson, V. Zinoviev}: Codes on Euclidean Spheres, Elsevier Science (2001)
\bibitem{STEINER}
\textsc{M. Fickus, D.G. Mixon, J.C. Tremain}: Steiner equiangular tight frames, {\it Linear Algebra Appl.}, {\bf 436}, 1014--1027 (2012)
\bibitem{GKM}
\textsc{G. Greaves, J.H. Koolen, A. Munemasa, F. Sz\"oll\H{o}si}: Equiangular lines in Euclidean spaces, {\it J. Combin. Theory Ser. A}, {\bf 138}, 208--235 (2016)
\bibitem{BTF}
\textsc{J.I. Haas, J. Cahill, J. Tremain, P.G. Casazza}: Constructions of biangular tight frames and their relationships with equiangular tight frames, {\it preprint}, arXiv:1703.01786 [math.FA] (2017)
\bibitem{hanaki}
\textsc{A. Hanaki, I. Miyamoto}: Classification of association schemes of small order, {\it Discrete Math.}, {\bf 264}, 75--80 (2003)
\bibitem{SLOANE2}
\textsc{R.H. Hardin, N.J.A. Sloane}: McLaren's improved snub cube and other new spherical designs in three dimensions, {\it Discrete Comput. Geom.}, {\bf 15}, 429--441 (1996)
\bibitem{NJH}
\textsc{N.J. Higham}: Cholesky Factorization, {\it WIREs Comp. Stat.}, {\bf 1}, 251--254 (2009)
\bibitem{HOLZNEW}
\textsc{W.H. Holzmann, H. Kharaghani, S. Suda}: Mutually unbiased biangular vectors and association schemes. In: \textsc{C.J. Colbourn} (ed.) Algebraic Design Theory and Hadamard Matrices, 149--157 (2015)
\bibitem{KREDOCK}
\textsc{W.M. Kantor}: Codes, Quadratic Forms and Finite Geometries, {\it Proc. Sympos. Appl. Math.}, {\bf 50}, 153--177 (1995)
\bibitem{KO}
\textsc{P. Kaski, P.R.J. \"Osterg\aa rd}: Classification Algorithm for Codes and Designs, Springer, Berlin (2006)
\bibitem{BIPLANEPAT}
\textsc{P. Kaski, P.R.J. \"Osterg\aa rd}: There are exactly five biplanes with $k=11$, {\it J. Combin. Des.}, {\bf 16}, 117--127 (2008)
\bibitem{LS}
\textsc{P.W.H. Lemmens, J.J. Seidel}: Equiangular lines, {\it J. Algebra}, {\bf 27}, 494--512 (1973)
\bibitem{vLS}
\textsc{J.H. van Lint, J.J. Seidel}: Equilateral point sets in elliptic geometry, {\it Indag. Math.} {\bf 28}, 335--348 (1966)
\bibitem{L}
\textsc{P. Lison\v{e}k}: New maximal two-distance sets, {\it J. Combin. Theory Ser. A}, {\bf 77}, 318--338 (1997)
\bibitem{MCK}
\textsc{B.D. McKay, A. Piperno}: Practical graph isomorphism, II, {\it J. Symbolic Comput.}, {\bf 60}, 94--112 (2013)
\bibitem{DUSTIN}
\textsc{D.G. Mixon, H. Parshall}: The optimal packing of eight points in the real projective plane, {\it Exp. Math.}, to appear \url{https://doi.org/10.1080/10586458.2019.1641767} (2019)
\bibitem{NM}
\textsc{O.R. Musin, H. Nozaki}: Bounds on three- and higher-distance sets, {\it European J. Combin.}, {\bf 32}, 1182--1190 (2011)
\bibitem{NOZ}
\textsc{H. Nozaki}: A generalization of Larman--Rogers--Seidel's theorem, {\it Discrete Math.}, {\bf 311}, 792--799 (2011)
\bibitem{SHINO}
\textsc{H. Nozaki, M. Shinohara}: Maximal $2$-distance sets containing the regular simplex, preprint arXiv:1904.11351 [math.CO] (2019)
\bibitem{NS}
\textsc{H. Nozaki, S. Suda}: Weighing matrices and spherical codes, {\it J. Algebraic Combin.}, {\bf 42}, 283--291 (2015).
\bibitem{REA}
\textsc{R.C. Read}: Every one a winner, or how to avoid isomorphism search when cataloguing
combinatorial configurations, {\it Ann. Discrete Math.}, {\bf 2}, 107--120 (1978)
\bibitem{SZO}
\textsc{F. Sz\"oll\H{o}si, P.R.J. \"Osterg\aa rd}: Constructions of maximum few-distance sets in Euclidean spaces, {\it preprint}, arXiv:1804.06040 [math.MG] (2018)
\bibitem{SHAYNE}
\textsc{S.F.D. Waldron}: An Introduction to Finite Tight Frames, Birkh\"auser (2018)
\end{thebibliography}
\end{document}